\titleformat{\subsubsection}[runin]
{\normalfont\bfseries}{\thesubsubsection}{1em}{}
\newcommand{\ZZ}{\mathbb Z}
\newcommand{\RR}{\mathbb R}
\newcommand{\GG}{\mathbb G}
\newcommand{\PP}{\mathbb P}
\newcommand{\NN}{\mathbb N}
\newcommand{\XX}{\mathbb X}
\renewcommand{\AA}{\mathbb A}
\newcommand{\cO}{\mathcal O}
\newcommand{\cL}{\mathcal L}
\newcommand{\cM}{\mathcal M}
\newcommand{\cF}{\mathcal F}
\newcommand{\cK}{\mathcal K}
\newcommand{\cX}{\mathcal X}
\newcommand{\cI}{\mathcal I}
\newcommand{\Wedge}{\bigwedge}
\renewcommand{\Tilde}{\widetilde}
\renewcommand{\Bar}{\overline}
\DeclareMathOperator{\SL}{SL}
\DeclareMathOperator{\Proj}{Proj}
\DeclareMathOperator{\Spec}{Spec}
\DeclareMathOperator{\Hilb}{Hilb}
\DeclareMathOperator{\LW}{LW}
\DeclareMathOperator{\Aut}{Aut}
\DeclareMathOperator{\ind}{ind}
\DeclareMathOperator{\trop}{trop}
\DeclareMathOperator{\sm}{sm}
\DeclareMathOperator{\Stab}{Stab}
\DeclareMathOperator{\pr}{pr}
\DeclareMathOperator{\WS}{WS}
\DeclareMathOperator{\SWS}{SWS}
\DeclareMathOperator{\Hom}{Hom}
\DeclareMathOperator{\colim}{colim}
\DeclareMathOperator{\opp}{op}
\DeclareMathOperator{\Sets}{\underline{Sets}}
\DeclareMathOperator{\ksch}{\underline{k-Sch}}
\numberwithin{equation}{subsection}
\newtheorem{theorem}{Theorem}[subsection]
\newtheorem*{theorem*}{Theorem}
\newtheorem{lemma}[theorem]{Lemma}
\newtheorem{proposition}[theorem]{Proposition}
\newtheorem{corollary}[theorem]{Corollary}
\theoremstyle{definition}
\newtheorem{definition}[theorem]{Definition}
\theoremstyle{remark}
\newtheorem{remark}[theorem]{Remark}
\begin{document}

\title{\Large{\textbf{EXPANSIONS FOR HILBERT SCHEMES OF POINTS ON SEMISTABLE DEGENERATIONS}}}
\author{\large{CALLA TSCHANZ}}
\date{}
\maketitle

\textbf{Abstract.} The aim of this paper is to extend the expanded degeneration construction of Li and Wu to obtain good degenerations of Hilbert schemes of points on semistable families of surfaces, as well as to discuss alternative stability conditions and parallels to the GIT construction of Gulbrandsen, Halle and Hulek and logarithmic Hilbert scheme constructions of Maulik and Ranganathan. We construct a good degeneration of Hilbert schemes of points as a proper Deligne-Mumford stack and show that it provides a geometrically meaningful example of a construction arising from the work of Maulik and Ranganathan.

\setcounter{tocdepth}{1}

\tableofcontents

\section{Introduction}

The study of moduli spaces is a central topic in algebraic geometry; among moduli spaces, Hilbert schemes form an important class of examples. They have been widely studied in geometric representation theory, enumerative and combinatorial geometry and as the two main examples of hyperk\"ahler manifolds, namely Hilbert schemes of points on K3 surfaces and generalised Kummer varieties. A prominent direction in this area is to understand the local moduli space of such objects and, in particular, the ways in which a degeneration of smooth Hilbert schemes may be given a modular compactification. 

For example, we may consider the geometry of relative Hilbert schemes on a degeneration whose central fibre has normal crossing singularities. We may then ask how the singularities of such a Hilbert scheme may be resolved while preserving certain of its properties or how it may be expressed as a good moduli space. This then becomes a compactification problem with respect to the boundary given by the singular locus. Historically, an important method used in moduli and compactification problems has been Geometric Invariant Theory (GIT). More recently, the work of Maulik and Ranganthan \cite{MR} has explored how methods of tropical and logarithmic geometry can be used to address such questions for Hilbert schemes. This builds upon previous work of Li \cite{Li} and Li and Wu \cite{LW} on expanded degenerations for Quot schemes and work of Ranganathan \cite{Ranganathan} on logarithmic Gromov-Witten theory with expansions.

Briefly stated, the aim of this paper is to provide explicit examples of such compactifications and explore the connections between these methods.

\subsection{Basic setup}
Let $k$ be an algebraically closed field of characteristic zero. Let $X\to C$ be a projective family of surfaces over a curve $C\cong \AA^1$ such that the total space is smooth and the central fibre $X_0$ has simple normal crossing singularities. At a triple point of the singular fibre, $X$ is étale locally given by $\Spec k[x,y,z,t]/(xyz-t)$. In this étale local model, the general fibres are smooth and the central fibre $X_0$ is given by three planes intersecting transversely in $\AA^3$. Throughout this work, these will be denoted $Y_1,Y_2$ and $Y_3$, given in local coordinates by $x=0,\ y=0$ and $z=0$ respectively. Let $X^\circ \coloneqq X\setminus X_0$, which lies over $C^\circ\coloneqq C\setminus \{0\}$. Given such a family $X\to C$, we will explore how techniques of expanded degenerations may be used to construct good compactifications of the relative Hilbert scheme of $m$ points $\Hilb^m(X^\circ/C^\circ)$.

\medskip
The aim is to construct a compactification which is flat over $C$ and in which all limit subschemes can be chosen to satisfy some transversality condition in some modification of $X_0$. In general, transversality will mean that the subschemes should be normal to the codimension 1 strata of the central fibre. This forces any interesting behaviour of the subschemes to occur on the smooth irreducible components of the modifications of $X_0$. In the case which interests us here, namely Hilbert schemes of points, it will just mean that we would like our subschemes to have support in the smooth loci of the fibres. We will refer to this condition throughout this work as the condition that the subschemes are \emph{smoothly supported}. The problem therefore is to construct \emph{expansions} (birational modifications of the central fibre of $X$ in a 1-parameter family) in which all limits needed to compactify $\Hilb^m(X^\circ/C^\circ)$ can be chosen to be smoothly supported. This allows us to break down the problem of studying Hilbert schemes of points on $X_0$ into smaller parts, by studying the products of Hilbert schemes of points on the irreducible components of the modifications of $X$. Moreover, this approach will allow us to construct compactifications as stacks which have good properties as moduli spaces. In particular, for all the compactifications constructed in this way, the data of each family of length $m$ zero-dimensional subschemes over $C$ is completely determined by its degenerate fibre, i.e.\ by its limit in the compactification. As we will mention in the following section, the work of Li and Wu only covers the case where the singular locus of $X_0$ is smooth. We would like to highlight that understanding how these problems work in general for simple normal crossings is quite powerful, as we can always use semistable reduction to reduce to this case.

\medskip
As is mentioned in Section \ref{further results}, this type of construction can be applied to construct type III degenerations of Hilbert schemes of points on K3 surfaces. This will be described in future work.

\subsection{Previous work in this area}

Expanded degenerations were first introduced by Li \cite{Li} and then used by Li and Wu \cite{LW} to study Quot schemes on degenerations $X\to C$, in the case where the singular locus of $X_0$ is smooth. They construct a stack of expansions $\mathfrak{C}$ and a family $\mathfrak{X}$ over it, which is a stack of modifications of $X_0$ in an expanded family, subject to some equivalence relations; among others a relation induced by a natural torus action on the modified fibres. They then impose a stability condition which cuts out families of subschemes of $\mathfrak{X}$ which meet the boundary of the special fibre in a transverse way. For each choice of Hilbert polynomial the family thus obtained is a proper Deligne-Mumford stack.

\medskip
Following on from \cite{LW}, Gulbrandsen, Halle and Hulek \cite{GHH} present a GIT version of the above construction in the case of Hilbert schemes of points. They construct an explicit expanded degeneration, i.e.\ a modified family over a larger base, whose fibres correspond to blow-ups of components of $X_0$ in the family. They present a linearised line bundle on this space for the natural torus action and they are able to show that in this case the Hilbert-Mumford criterion simplifies down to a purely combinatorial criterion. Using this, they impose a GIT stability condition which recovers the transverse zero-dimensional subschemes of Li and Wu and prove that the corresponding stack quotient is isomorphic to that of Li and Wu. A motivation for this work was to construct type II degenerations of Hilbert schemes of points on K3 surfaces. Indeed, type II good degenerations of K3 surfaces present these types of singularities in the special fibre, which is a chain of surfaces intersecting along smooth curves.

\medskip
There is more recent work of Maulik and Ranganathan \cite{MR}, building upon earlier ideas of Ranganathan \cite{Ranganathan} and results of Tevelev \cite{Tevelev}, in which they use techniques of logarithmic and tropical geometry to construct appropriate expansions of $X\to C$. This allows them to define moduli stacks of transverse subschemes starting from the case where $X_0$ is any simple normal crossing variety. They show that the stacks thus constructed are proper and Deligne-Mumford. For more details on this, see Section \ref{MR_section}.

\subsection{Main results}\label{further results}
Let $X \to C$ be a semistable degeneration of surfaces. In the following sections, we propose explicit constructions of expanded degenerations and stacks of stable length $m$ zero-dimensional subschemes on these expanded families, which we show to have good properties.

\medskip
We start by constructing expanded degenerations as schemes and discuss various GIT stability conditions on the corresponding relative Hilbert schemes of points. Unlike the situation in \cite{GHH}, however, a single GIT condition is not sufficient to obtain the desired stable locus for this problem. This is explained in Section \ref{Motivation}. We then define a stack of expansions $\mathfrak{C}$ and a family $\mathfrak{X}$ over it, which contains all expansions of $X$ which can be obtained using a specific sequence of blow-ups. The type of expansion of $X$ which can appear in $\mathfrak{X}$ is therefore greatly restricted, which in this case offers significant advantages, as we will see. We then describe how \textit{Li-Wu stability} (abbreviated LW stability) can be extended to this setting and define an alternative notion of stability, called \textit{smoothly supported weak strict stability} (abbreviated SWS stability), derived from GIT stability conditions. We may then construct the stacks $\mathfrak{M}^m_{\LW}$ and $\mathfrak{M}^m_{\SWS}$ of LW and SWS stable length $m$ zero-dimensional subschemes on $\mathfrak{X}$. Our first main results are the following.

\begin{theorem}
    The stacks $\mathfrak{M}^m_{\LW}$ and $\mathfrak{M}^m_{\SWS}$ are Deligne-Mumford and proper.
\end{theorem}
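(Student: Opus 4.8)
The plan is to handle the two stability conditions in parallel, since both $\mathfrak{M}^m_{\LW}$ and $\mathfrak{M}^m_{\SWS}$ are open substacks of the relative Hilbert scheme $\Hilb^m(\mathfrak{X}/\mathfrak{C})$, and to verify algebraicity, the Deligne--Mumford property, and properness over $C$ in turn. First, for algebraicity: $\mathfrak{C}$ is an algebraic stack (a colimit of smooth finite-type pieces) and $\mathfrak{X}\to\mathfrak{C}$ is proper with a relative polarization, being built from $X$ by a prescribed sequence of blow-ups along smooth centres in families, so the relative Hilbert functor is representable by an algebraic stack, quasi-projective over $\mathfrak{C}$ on each finite-type piece; fixing the length $m$ makes it of finite type. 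I would then check that each stability condition is open on fibres --- for LW stability as in \cite{LW} and \cite{GHH}, and for SWS stability from its GIT origin (semistability being open) together with openness of the smoothly supported locus --- so that $\mathfrak{M}^m_{\LW}$ and $\mathfrak{M}^m_{\SWS}$ are open substacks, hence algebraic and of finite type.

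Next, the Deligne--Mumford property. In characteristic zero it suffices to show that stable objects have unramified, equivalently finite and reduced, automorphism groups. An automorphism of a point of $\mathfrak{M}^m_\bullet$ over an expansion $\mathfrak{X}_s$ is an automorphism of $\mathfrak{X}_s$ over $X$ fixing the length $m$ subscheme $Z$; such automorphisms form a torus $T$ acting on the exceptional components of $\mathfrak{X}_s$, so it is enough to show the stabiliser of $Z$ in $T$ is finite. This is exactly where stability intervenes: smooth support, together with the strictness/minimality built into LW and SWS stability, forces $Z$ to meet every exceptional component in a configuration admitting no positive-dimensional subtorus of symmetries, since a one-parameter subgroup fixing $Z$ would allow one to contract a component and contradict stability. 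I would make this precise by the standard argument that such a one-parameter subgroup produces a destabilising degeneration.

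Finally, properness over $C$. With the diagonal unramified, it remains to verify the valuative criterion over a DVR $R$ with fraction field $K$ mapping to $C$, the only interesting case being when the closed point maps to $0\in C$ and the generic point to $C^\circ$. For uniqueness (separatedness): given two stable families over $\Spec R$ agreeing over $\Spec K$, I would argue the central limit is determined --- properness of the auxiliary projective Hilbert scheme $\Hilb^m(X/C)$ gives at most one flat limit cycle in $X$, the equivalence relations defining $\mathfrak{C}$ identify the redundant torus-translates of an expansion, and stability then pins down a unique expansion in $\mathfrak{X}$ and a unique smoothly supported $Z$ inside it. For existence: starting from the $K$-point, which over the smooth fibres is simply $m$ points in $X_K$, I would take a flat limit in $\Hilb^m(X/C)$ after a finite base change, observe that it may fail to be smoothly supported (its support can meet the double curves or triple points of $X_0$), and then use the explicit blow-up structure of $\mathfrak{X}\to\mathfrak{C}$ to replace $X$ by an expansion in which the strict transform of the limit becomes smoothly supported, finally applying a further finite base change / semistable reduction to upgrade it to a genuinely stable object while preserving flatness over $C$.

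The main obstacle is the existence step of the valuative criterion: one must show that the \emph{restricted} family $\mathfrak{X}$, assembled from a single prescribed sequence of blow-ups, is nevertheless rich enough to realise every possible limit as a smoothly supported subscheme, and that LW (respectively SWS) stability can always be reached after base change. Controlling this requires a careful case analysis of how $m$ points may collide onto the codimension-one and codimension-two strata of $X_0$, matched against the allowed blow-up sequences; this is precisely where the simple normal crossing hypothesis, as opposed to the smooth singular locus treated in \cite{LW} and \cite{GHH}, makes the argument genuinely more delicate, and I expect the bulk of the work to lie there.
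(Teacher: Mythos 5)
Your outline reproduces the paper's high-level architecture (openness of the stable loci, finite stabilisers for the Deligne--Mumford property, and the valuative criterion for universal closedness and separatedness, after base change), but the two places where the actual content of the theorem lives are left unproven. The first is the existence step, which you yourself flag as ``where the bulk of the work lies'' and then do not carry out. The paper's proof of Proposition \ref{universally closed} is precisely this missing argument: after a finite base change one writes each component $Z_i$ of the family in local coordinates as $(c_{i,1}u^{e_{i,1}}, c_{i,2}u^{e_{i,2}}, c_{i,3}u^{e_{i,3}})$, orders the resulting powers of the uniformiser, and from this list builds an explicit tuple $(t_1,\dots,t_{n+1}) = (f_1u^{g_1},\dots,f_{n+1}u^{g_{n+1}})$ defining the map $\xi\colon S' \to C[n]$ and hence the expansion $\cX' = \xi^*X[n]$; this choice simultaneously forces the limit to be smoothly supported and forces every expanded $\Delta$-component to contain a point of the support, so that stability follows from Theorem \ref{stability theorem}. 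Similarly, your uniqueness step (``stability then pins down a unique expansion and a unique $Z$'') is exactly the assertion that must be proved, and the paper proves it in Proposition \ref{separated 1} by showing that the valuations $(e_{i,1},e_{i,2},e_{i,3})$ of the sections, together with the requirements of smooth support and of no empty expanded component, determine the tuple $(h_1u^{g_1},\dots,h_nu^{g_n})$ up to units and standard embeddings. Without these coordinate (equivalently, tropical) arguments the proposal is a plan rather than a proof; note also that the claim that the restricted family $\mathfrak{X}$, built from one prescribed blow-up sequence, realises all limits is what makes Lemma \ref{exists unique associated pair} work, and it is special to this construction rather than ``standard.''

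The second gap is your restriction of the valuative criterion to the case where ``the generic point maps to $C^\circ$.'' That is not the only case needed: a $K$-point of the stack may land entirely over $0\in C$, e.g.\ a family of stable configurations in a fixed modified special fibre whose points collide into the singular locus, or whose points run off to the boundary of an expanded $\PP^1$-direction so that only the $(x_0^{(i)}:x_1^{(i)})$ or $(y_0^{(i)}:y_1^{(i)})$ coordinates degenerate. Universal closedness and separatedness over $C$ require limits and uniqueness for these families too (in particular the central fibre of the stack must itself be proper), and the paper devotes Proposition \ref{limits of special elements}, together with the tropical compatibility notion and Lemma \ref{exists unique associated pair}, to exactly this situation; your sketch gives no argument for it. A minor further point: for the Deligne--Mumford property your automorphism analysis only considers the torus, whereas the stack $\mathfrak{X}$ also carries the equivalences lifted from the isomorphisms $\Tilde{\tau}_{I,I'}$, and one must check (as in Proposition \ref{finite autom}) that these fix no object unless they are the identity; for LW stability finiteness of automorphisms is in any case built into the definition, so no one-parameter-subgroup degeneration argument is needed there.
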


\begin{theorem}
    There is an isomorphism of stacks
    \[
    \mathfrak{M}^m_{\LW}\cong \mathfrak{M}^m_{\SWS}.
    \]
\end{theorem}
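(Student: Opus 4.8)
The plan is to show that the two stacks are the "same" substack of a common ambient stack—the stack of all length $m$ zero-dimensional subschemes on the family $\mathfrak{X}\to\mathfrak{C}$—by proving that LW stability and SWS stability cut out the same objects, functorially in the base. Since both $\mathfrak{M}^m_{\LW}$ and $\mathfrak{M}^m_{\SWS}$ are defined as locally closed (indeed open, once one restricts to flat families with smoothly supported, hence transverse, limits) substacks of this ambient stack via stability conditions on the fibres, it suffices to check the two conditions are equivalent pointwise on geometric points and that this equivalence is compatible with the groupoid structure (automorphisms and morphisms). Concretely, I would fix an expansion $\mathfrak{X}[\mathbf{n}]$ appearing in $\mathfrak{X}$—recall these are obtained by a restricted, explicit sequence of blow-ups of the components of $X_0$—together with a length $m$ subscheme $Z$ supported in the smooth locus of the fibres, and analyze when $Z$ is LW stable versus SWS stable directly in terms of the combinatorics of how the points of $Z$ distribute among the irreducible components of the expansion.

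The key steps, in order: (1) Recall the precise definitions. LW stability requires that the subscheme meet the singular locus transversally (here: smoothly supported) together with a finiteness/minimality condition on automorphisms—equivalently that no nontrivial torus acts on the expansion preserving $Z$; this forces every "new" (exceptional) component of the expansion to carry at least one point of $Z$ not lying on a deeper stratum, so that the component is not "superfluous". SWS stability, derived from the GIT analysis of Section on SWS, should similarly translate—via a Hilbert–Mumford-type computation analogous to \cite{GHH}—into a purely combinatorial condition on the distribution of points across components, again amounting to each exceptional component being "occupied". (2) Show these two combinatorial conditions literally coincide on the restricted class of expansions appearing in $\mathfrak{X}$: this is where the heavy restriction on which blow-ups are allowed does the work, because it limits the shapes of the dual complexes of the expansions, so that the "weak strict" bookkeeping of SWS and the automorphism-based bookkeeping of LW have no room to diverge. (3) Upgrade the pointwise equivalence to an equivalence of substacks: check that a family over an arbitrary base is LW stable (fibrewise stable, in the appropriate flat sense) if and only if it is SWS stable, using that both conditions are open and are detected on fibres, and that the isomorphism is the identity on the underlying ambient objects, hence automatically respects all morphisms and $2$-morphisms. (4) Conclude that the identity on the ambient stack restricts to an isomorphism $\mathfrak{M}^m_{\LW}\xrightarrow{\sim}\mathfrak{M}^m_{\SWS}$, and note that properness and Deligne–Mumfordness (already established) are transported for free.

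The main obstacle I expect is step (2): matching the GIT-derived SWS condition with the intrinsic LW condition. In \cite{GHH} a single linearisation suffices precisely because the singular locus of $X_0$ is smooth and the expansions are chains; here, with triple points and the $\Spec k[x,y,z,t]/(xyz-t)$ local model, the dual complex is two-dimensional and—as the paper itself flags—no single GIT chamber recovers the full stable locus, which is exactly why SWS is assembled from several conditions. So the real content is to verify that, on the \emph{specific} family $\mathfrak{X}$ with its \emph{specific} allowed blow-ups, the piecewise/weak-strict GIT recipe still outputs precisely the smoothly supported transverse subschemes with minimal automorphisms. I would handle this by a careful local analysis at triple points: enumerate the possible local expansions allowed in $\mathfrak{X}$, write down the torus acting on each, compute the weights of the relevant line bundle(s), and check that the Hilbert–Mumford inequalities are violated for exactly those $Z$ that leave some exceptional component unoccupied—i.e.\ exactly the LW-unstable ones. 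Once the local picture matches, globalising is routine gluing. A secondary, more technical point is ensuring the comparison is natural in families where the expansion itself varies (e.g.\ degenerates further); here one uses that both stability notions are stable under the generisation/specialisation within $\mathfrak{C}$ by construction, so the substacks are cut out by the same open condition and the identification is canonical.
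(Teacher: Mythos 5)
Your overall strategy (pointwise comparison of the two stability conditions plus a check on automorphisms) is in the spirit of the paper's argument, but step (2) contains a genuine gap: LW stability and SWS stability do \emph{not} literally coincide on a fixed expansion, and the isomorphism is \emph{not} the identity on a common ambient stack. By Theorem \ref{stability theorem}, a subscheme $Z$ in a fibre of $X[n]$ admits a stabilising linearisation if and only if for \emph{every} $k=1,\dots,n$ there is a point of the support in $(\Delta_1^{(k)})^\circ \cup (\Delta_2^{(n+1-k)})^\circ$ --- including those indices $k$ whose $\Delta$-components are \emph{not} expanded out in that fibre (where they are ``equal to'' $Y_1$ or $Y_2\cup Y_3$); see Lemma \ref{unstable}. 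LW stability, by contrast, only constrains the expanded components (via admissibility and finiteness of the torus stabiliser). Hence the inclusion $H^m_{[n],\SWS}\subset H^m_{[n],\LW}$ is strict, exactly as the paper notes: a smoothly supported subscheme in a fibre of low base codimension can be LW stable while leaving some unexpanded $\Delta$-component unoccupied, and no choice of $G$-linearised line bundle makes it GIT stable. So your claim that ``the weak-strict bookkeeping of SWS and the automorphism-based bookkeeping of LW have no room to diverge'' fails fibrewise, and with it the conclusion that the two substacks are cut out by the same open condition.

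The missing idea is that the comparison only works at the level of the stack $\mathfrak{X}$, using its extra equivalences. The paper constructs a genuine morphism $f\colon \mathfrak{M}^m_{\SWS}\to\mathfrak{M}^m_{\LW}$ by descending the inclusion $H^m_{[n],\SWS}\subset H^m_{[n],\LW}$, and proves surjectivity on $k$-points (Lemma \ref{bijection of stacks}) by re-choosing representatives: if $(Z_k,\cX_k)$ is LW but not SWS stable, the offending unoccupied $\Delta$-component is not expanded out, so the equivalences $\Tilde{\tau}_{I,I'}$ of $\mathfrak{X}$ identify the pair with one in which that component is not equal to any $\Delta$-component at all, and that representative is SWS stable. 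One then needs the formal criterion of Lemma \ref{equiv_stacks} (bijectivity on geometric points plus isomorphisms of automorphism groups for a representable \'etale morphism), with representability supplied by Lemma \ref{representable} via injectivity of the induced maps on stabilisers in $\GG_m^n$. Your proposal omits both the representative-swapping argument (the actual content of surjectivity) and any verification of representability, so as written it would not close.
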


This construction has the benefit of being very straightforward compared to the other possible constructions solving this problem, as we will discuss later. The restrictive choices made in the construction of $\mathfrak{X}$ mean that LW or SWS stability are already sufficient conditions to make the stacks of stable objects be proper. This is somewhat unexpected; indeed, in general we will need to take an additional stability condition, as can be seen in \cite{MR} (see Section \ref{MR_section} for the role of Donaldson-Thomas stability in this problem).

\subsubsection*{Allowing for different choices of expansions.}
In this paper, we discuss only a specific choice of model for the Hilbert scheme of points which we call the canonical moduli stack. In upcoming work, we will investigate how these methods can be extended to describe other choices of models. We will consider an approach which parallels work of Kennedy-Hunt on logarithmic Quot schemes \cite{K-H}, as well as recover certain geometrically meaningful choices of moduli stacks arising from the methods of Maulik and Ranganathan \cite{MR}. In particular, we will discuss how tube components and Donaldson-Thomas stability enter the picture in these more general cases (see Section \ref{MR_section} for definitions).

\subsubsection*{Application to hyperk\"ahler varieties.}
We only consider here the property that $X$ is a degeneration of surfaces with a specific type of singularity in its special fibre. A natural question is to study the more specific case where $X$ is a type III good degeneration of K3 surfaces and try to construct a family of Hilbert schemes of points on $X$ which will be minimal in the sense of the minimal model program, meaning a good or dlt minimal degeneration (see \cite{Nagai} and \cite{KLSV} for definitions of the minimality conditions). The singularities arising in such a degeneration $X$ are of the type described here, i.e.\ we can restrict ourselves to the local problem where $X_0$ is thought of as given by $xyz=0$ in $\AA^3$. Among other reasons, Hilbert schemes of points on K3 surfaces are interesting to study because they form a class of examples of hyperk\"ahler varieties.

\subsection{Organisation}

We start, in Section \ref{Background on tropical perspective}, by giving some background on logarithmic and tropical geometry, and an overview of the work of Maulik and Ranganathan from \cite{MR} which we will want to refer to in later sections.
Then, in Section \ref{Second_constr}, we set out an expanded construction on schemes and, in~\ref{GIT stability}, we discuss how various GIT stability conditions can be defined on this construction. In Section \ref{stack construction}, we describe a corresponding stack of expansions and family over it, building on the expanded degenerations we constructed as schemes. In Section \ref{canonical moduli stack}, we extend our stability conditions to this setting. We then show that the stacks of stable objects defined have the desired Deligne-Mumford and properness properties.

\subsubsection*{Acknowledgements.}
I would like to thank Gregory Sankaran for all his support throughout this project. Thank you also to my PhD examiners, Alastair Craw and Dhruv Ranganathan, for their many helpful comments. This work was undertaken while funded by the University of Bath Research Studentship Award. I am also grateful to Patrick Kennedy-Hunt and Thibault Poiret for many interesting conversations.

\section{Background on tropical perspective}\label{Background on tropical perspective}

We briefly introduce here the language of tropical and logarithmic geometry in the context of this problem. For more details on the contents of this section, see the article \cite{Abramovich}, lecture notes \cite{gael}, as well as the first section of \cite{MR}.

\subsection{Tropicalisation and expansion}\label{useful facts about trop geom}

\subsubsection*{Tropicalisation.}
Let $(X,\cM_X)$ be a logarithmic scheme, where the sheaf of monoids $\cM_X$ gives the divisorial logarithmic structure with respect to some simple normal crossing divisor $D\subset X$. Explicitly, for an open subset $U \subseteq X$, the sheaf  $\cM_X$ is given by
\[
\cM_{X}(U) \coloneqq \{ f\in \cO_X(U) \ | \ f|_{U\setminus D} \in \cO^*_X(U\setminus D) \}.
\]
Then we can associate a fan $\Sigma_X$ to this in the following way. Recall that the characteristic sheaf $\Bar{\cM}_X$ for the divisorial logarithmic structure is defined by
\[
\Bar{\cM}_X(U) = \{ f\in \cO_X(U) \ | \  f|_{U\setminus D} \in \cO^*_X(U\setminus D), \ f|_D = 0 \},
\]
i.e.\ it records the data of monomials which vanish at the divisor $D$. The purpose of $\Sigma_X$ will be to keep track of the corresponding degrees of vanishing. We let
\[
\Sigma_X \coloneqq \colim_{x\in X} (\Bar{\cM}_{X,x})^\vee.
\]
This will be contained in $\RR^r_{\geq 0}$, where $r$ is the number of components of $D$, since $D$ is a simple normal crossing divisor. We call $\Sigma_X$ the \textit{tropicalisation} of $X$.

\subsubsection*{Subdivisions of the tropicalisation define expansions of $X$.}
In the following, we will want to study possible birational modifications of the scheme $X$ around the divisor $D$. In the tropical language, these are expressed as subdivisions.

\begin{definition}
    Let $\Upsilon$ be a fan, let $|\Upsilon|$ be its support and $\upsilon$ be a continuous map
    \[
    \upsilon\colon |\Upsilon| \longrightarrow \Sigma_X
    \]
    such that the image of every cone in $\Upsilon$ is contained in a cone of $\Sigma_X$ and that is given by an integral linear map when restricted to each cone in $\Upsilon$. We say that $\upsilon$ is a \textit{subdivision} if it is injective on the support of $\Upsilon$ and the integral points of the image of each cone $\tau\in\Upsilon$ are exactly the intersection of the integral points of $\Sigma_X$ with $\tau$.
\end{definition}

A subdivision of the tropicalisation defines a birational modification of $X$ in the following way. The subdivision
\[
\Upsilon \longhookrightarrow \Sigma_X \longhookrightarrow \RR^r_{\geq 0}
\]
has an associated toric variety $\AA_{\Upsilon}$, which comes with a $\GG^r_m$-equivariant birational map $\AA_\Upsilon \to \AA^r$. Then we have an induced morphism of quotient stacks
\[
[\AA_\Upsilon/\GG_m^r] \longrightarrow [\AA^r/\GG_m^r]
\]
and we may define the induced birational modification of $X$ to be
\[
X_\Upsilon \coloneqq X \times_{[\AA^r/\GG_m^r]} [\AA_\Upsilon/\GG_m^r].
\]
We shall call such a birational modification an \textit{expansion} of $X$.

\subsubsection*{Visualising the problem.}
Here, we describe how to visualise the tropicalisation arising from the divisorial logarithmic structure on $X$ associated to a simple normal crossing divisor $D\subset X$. We explain this for the case which interests us here, that is, we assume that $X\to C$ is locally given by $\Spec k[x,y,z,t]/(xyz-t)$ and the boundary divisor is $D\coloneqq X_0$.

\medskip
Given a divisorial logarithmic structure on $X$, the tropicalisation is a fan or cone complex which for each defining function of the divisor records the degree of vanishing of this function in $X$. Here, the functions vanishing at $D$ will be $x,y$ and $z$, therefore we may represent $\Sigma_X$ as a fan in $\RR^3_{\geq 0}$, in this case the positive orthant and its faces, as can be seen in Figure \ref{trop(X)}. In this image the three half-lines correspond to the divisors $Y_1,\ Y_2$ and $Y_3$ in $X$. The 2-dimensional faces spanned by two such lines correspond to the intersections $Y_i\cap Y_j$ and the three dimensional interior of the cone corresponds to the triple intersection point $Y_1\cap Y_2\cap Y_3$. For convenience, we shall refer to this tropicalisation as $\trop(X)$ in later sections.

\begin{figure}
    \centering
    \begin{tikzpicture}[scale=3]
    
        \coordinate (O) at (0,0,0);
      \draw[thick,->] (0,0,0) -- (1,0,0) node[below= 5, right=0]{$Y_2$};
      \draw[thick,->] (0,0,0) -- (0,1,0) node[right=1]{$Y_1$};
      \draw[thick,->] (0,0,0) -- (0,0,1) node[above=2]{$Y_3$};
      \draw (0.5,0.5,0) node[anchor=center]{$Y_1\cap Y_2$};
      \draw (0.5,0,0.5) node[anchor=center]{$Y_2\cap Y_3$};
      \draw (0,0.7,0.9) node[anchor=center]{$Y_1\cap Y_3$};
      \filldraw[black] (0,0,0) circle (0.7pt) ;
    \end{tikzpicture}
        \caption{Tropicalisation of $X$.}
        \label{trop(X)}
\end{figure}
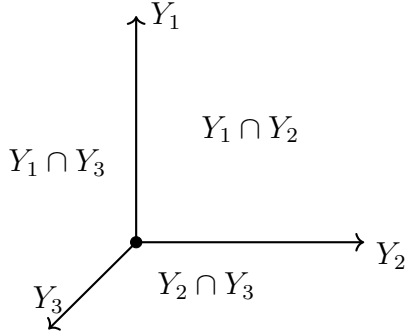

\medskip
The tropicalisation of $D$ can be visualised by taking a hyperplane slice through the cone in Figure \ref{trop(X)}; this yields a triangle with vertices corresponding to $Y_1,Y_2$ and $Y_3$ in $X_0$, edges between these vertices corresponding to the lines $Y_i\cap Y_j$, and 2-dimensional interior corresponding to the point $Y_1\cap Y_2\cap Y_3$, as pictured in Figure \ref{trop(X_0)}. This also corresponds to the \emph{dual complex} of the fibre $X_0$ (see \cite{dFKX} for a definition). We shall refer to the tropicalisation of $X_0$ as $\trop(X_0)$.

\medskip
Recall that $C\cong \AA^1$ and the fan of $\AA^1$ is a half-line with a distinguished vertex. Making a choice of point on this line corresponds to choosing a height for the triangle within the cone $\RR^3_{\geq 0}$. Geometrically, we can think of changing the height of the triangle as making a finite base change on $X$.

\begin{figure}
    \centering
    \begin{tikzpicture}[scale=3]
    
        \coordinate (O) at (0,0,0);
      \draw[thick,->] (0,0,0) -- (1,0,0) node[below= 5, right=0]{$y$};
      \draw[thick,->] (0,0,0) -- (0,1,0) node[right=1]{$x$};
      \draw[thick,->] (0,0,0) -- (0,0,1) node[above=2]{$z$};
      \draw[thick] (0,0,0.5) -- (0.5,0,0) node[below= 7, right=0]{$Y_2$};
      \draw[thick] (0.5,0,0) -- (0,0.5,0) node[right=1]{$Y_1$};
      \draw[thick] (0,0.5,0) -- (0,0,0.5) node[above=5, left = -1]{$Y_3$};
      \filldraw[black] (0.5,0,0) circle (0.7pt) ;
      \filldraw[black] (0,0.5,0) circle (0.7pt) ;
      \filldraw[black] (0,0,0.5) circle (0.7pt) ;
      \filldraw[black] (0,0,0) circle (0.5pt) ;
    \end{tikzpicture}
        \caption{Tropicalisation of $X_0$.}
        \label{trop(X_0)}
\end{figure}
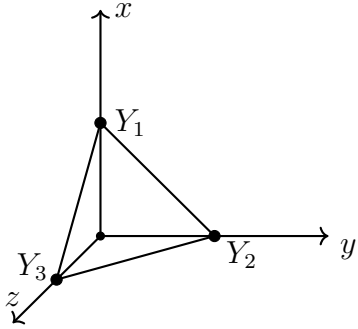

\subsection{Maulik-Ranganathan construction}\label{MR_section}

We will briefly recall some key points of \cite{MR}.
The aim of their work is to study the moduli space of ideal sheaves of fixed numerical type which meet the boundary divisor transversely. Some key motivations for the study of such an object come from enumerative geometry. For example, a common method used to address problems of curve counting in a given smooth variety is to degenerate this variety to a singular union of simpler irreducible components. The property of transversality is then crucial to ensure that all interesting behaviour of the ideal sheaves on the degenerate object occurs with support in the interior of the simpler irreducible components, which allows us to study it with more ease. One of the main difficulties with this approach is that often, as in this setting, the space of transverse ideal sheaves with respect to $D$ is non-compact. Constructing the appropriate compactification will yield a space which is flat and proper over $C$. In \cite{MR}, Maulik and Ranganathan formulate the Donaldson-Thomas theory of the pair $(X,D)$, starting by constructing compactifications of the space of ideal sheaves in $X$ transverse to $D$.

\medskip
We discuss \cite{MR} specifically with respect to the case which interests us here, namely that of a degeneration $X\to C$ as described above, where we seek to study the moduli space of ideal sheaves with fixed constant Hilbert polynomial $m$, for some $m\in \NN$ with respect to the boundary divisor $D= X_0$. The key idea is to construct the tropicalisation of $X$, denoted $\Sigma_X$, and a corresponding tropicalisation map, which is used to understand how to obtain the desired transversality properties in our compactifications.

\subsubsection*{Tropicalisation map.}
We may construct a tropicalisation map which takes points of $X^\circ$ to $\Sigma_X$, as in Section 1.4 of \cite{MR}. We recall the details of this map here. We assume that $\mathcal{K}$ is a valued field extending $k$. First, we take a point of $X^\circ(\cK)$, given by some morphism $\Spec \cK \to X^\circ$. By the properness of $X$, this extends to a morphism $\Spec R \to X$ for some valuation ring $R$. Now, let $P\in X$ denote the image of the closed point by the second morphism. The stalk of the characteristic sheaf at $P$ is given by $\NN^r$, where $r$ is the number of linearly independent vanishing equations of $D$ at the point $P$. For example, in our context, if $P\in Y_1\subset X_0$, then $r = 1$ and $\NN$ is generated by the function $x$; if $P\in Y_1\cap Y_2$, then $r=2$ with $\NN^2$ generated by the functions $x$ and $y$; etc.

Each element of $\NN^r$ corresponds to a function $f$ on $X$ in the neighbourhood of $P$ up to multiplication by a unit and we may then evaluate $f$ with respect to the valuation map associated to $\cK$. This determines an element of
\[
[\NN^r \to \RR_{\geq 0}] \in \Hom(\NN^r,\RR_{\geq 0}) \longhookrightarrow \Sigma_X.
\]
This gives rise to a morphism
\[
\trop \colon X^\circ \longrightarrow \Sigma_X
\]
called the tropicalisation map. Now let the valuation map $\cK\to \RR$ be surjective and let $Z^\circ\subset X^\circ$ be an open subscheme. We denote by $\trop(Z^\circ)$ the image of the map $\trop$ restricted to $Z^\circ(\cK)$. Maulik and Ranganathan are then able to show, based on previous work of Tevelev \cite{Tevelev} for the toric case, that given such an open subscheme $Z^\circ\subset X^\circ$, the subset $\trop(Z^\circ)$ gives rise to an expansion $X'$ of $X$ in which the closure $Z$ of $Z^\circ$ has the required transversality properties. This gives us a convenient dictionary to move back and forth between the geometric and combinatorial points of view.

\medskip
The possible tropicalisations of such subschemes, corresponding to expansions on the geometric side, are captured on the combinatorial side by the notion of \emph{1-complexes} embedding into $\Sigma_X$. See \cite{MR} for precise definitions.

\subsubsection*{Existence and uniqueness of transverse limits.}
Maulik and Ranganathan introduce notions of \textit{dimensional transversality} and \textit{strong transversality}, which, in the specific case of Hilbert schemes of points, happen to be equivalent to Li-Wu stability (see Section \ref{stab_conditions} for a definition of this stability condition). In general for higher dimensional subschemes this will not be the case, however.

\medskip
As mentioned above, for an open subscheme $Z^\circ\subset X^\circ$, we may consider its image $\trop(Z^\circ)$ under the tropicalisation map. Now recall from Section \ref{useful facts about trop geom} that a subdivision of the tropicalisation $\Sigma_X$ defines an expansion of $X$. The expansion corresponding to the subdivision given by $\trop(Z^\circ)$ in $\Sigma_X$ will in general determine a birational modification, not necessarily a blow-up. We note here that while contracting components of a fibre over a 1-parameter family will in general be flat, this is no longer the case when this operation is made over a larger base. It will therefore be necessary, for each possible $\trop(Z^\circ)$, to make a choice of polyhedral subdivision corresponding to an actual blow-up on $X$. Maulik and Ranganathan prove that, given any such $Z^\circ$, an expansion can be constructed from $\trop(Z^\circ)$ which has the required transversality property and good existence and uniqueness properties.

\subsubsection*{Construction of the stacks of expansions.}
Through these methods, one can obtain existence and uniqueness properties for these flat limits. To build a moduli space of such subschemes, Maulik and Ranganathan start by constructing a moduli space of possible expansions arising from Tevelev's procedure. Let us denote the set of isomorphism classes of 1-complexes which embed into $\Sigma_X$ by $|T(\Sigma_X)|$. Some subtleties arise at this point, namely that in general the space constructed will not be representable as a logarithmic algebraic stack. This can be seen through the fact the category of logarithmic algebraic stacks is equivalent to the category of cone stacks, but $|T(\Sigma_X)|$ cannot in general be given a proper cone structure. In order to give it a cone structure, Maulik and Ranganathan study the spaces of maps $\XX_G$ from the graphs $G$ associated to 1-complexes in $|T(\Sigma_X)|$ to $\Sigma_X$, and identify maps which have the same image. By taking appropriate subdivisions of the objects $\XX_G$ and identifying them in the right way, they obtain a \emph{moduli space of tropical expansions} $T$, which has the desired cone structure.

\medskip
This operation results in non-uniqueness, as we are making a choice of polyhedral subdivision and there is in general no canonical choice.

\subsubsection*{Proper Deligne-Mumford stacks.}

In order to construct the universal family $\mathfrak{Y} \subset T\times \Sigma$, some additional choices must be made. Indeed, as mentioned above, $\trop(Z^\circ)$ does not in general define a blow-up, so, when fitting the expansions we constructed together into one large family over a larger base, we must modify these expansions to ensure flatness. Here this is resolved by adding distinguished vertices to the relevant complexes. These added vertices will be 2-valent vertices along edges of the 1-complexes parameterised by $T$ and we call them \emph{tube vertices}. Geometrically, they look like $\PP^1$-bundles over curves in $X_0$ (where we took $X$ to be a family of surfaces). Again, this operation is not canonical and results in non-uniqueness.

\medskip
The addition of these tube vertices in the tropicalisation means that there are more potential components in each expansion, which interferes with the previously set up uniqueness results. Indeed, recall that $\trop(Z^\circ)$ gave us exactly the right number of vertices in the dual complex in order for each family of subschemes $Z^\circ \subset X^\circ$ to have a unique limit representative. Therefore, to reflect this, \emph{Donaldson-Thomas stability} asks for subschemes to be DT stable if and only if they are tube schemes precisely along the tube components. We say that a 1-dimensional subscheme is a \emph{tube} if it is the schematic preimage of a zero-dimensional subscheme in $D$. In the case of Hilbert schemes of points, this condition will translate simply to a 0-dimensional subscheme $Z$ being DT stable if and only if no tube component contains a point of the support of $Z$ and every other irreducible component expanded out by our blow-ups contains at least one point of the support of $Z$. 

\medskip
Maulik and Ranganathan define a subscheme to be \emph{stable} if it is strongly transverse and DT stable. For fixed numerical invariants the substack of stable subschemes in the space of expansions forms a Deligne-Mumford, proper, separated stack of finite type over~$C$.

\subsubsection*{Comparison with the results of this paper.}
The construction we present in this paper has the surprising property that we do not need to label any components as tubes in order for the stack of stable objects we define to be proper. This is an artifact of the specific choices of blow-ups to be included in our expanded degenerations. The work of Maulik and Ranganathan shows us that this is not expected in general. As mentioned in Section \ref{further results}, we will discuss in an upcoming paper how to construct proper stacks of stable objects in cases where different choices of expansions are made and it becomes necessary for us as well to introduce a Donaldson-Thomas stability condition.

\section{The expanded construction}\label{Second_constr}

In this section we construct explicit expanded degenerations $X[n]$ out of a 1-parameter family $X\to C$ by expanding the base and making sequences of blow-ups on the expanded family. As we will see these support a global action by the torus $G\coloneqq \GG_m^n$. We construct these spaces as schemes here. Later, in Section \ref{stack construction}, we give a stack construction building upon these schemes, in which we impose additional equivalence relations which essentially set to be equivalent any two fibres which look identical. We will touch more upon why this is necessary in Section \ref{stack construction}.

\subsubsection*{Setup and assumptions.}

As before, let $X\to C$ be a family of surfaces over a curve isomorphic to $\AA^1$, where $X$ is given in étale local coordinates by $\Spec k [x,y,z,t]/(xyz-t)$. We denote by $X_0$ the special fibre and by $Y_1$, $Y_2$ and $Y_3$ the irreducible components of this special fibre given locally by $x=0, y=0$ and $z=0$ respectively. Figure \ref{geom and trop special fibre} shows a copy of the special fibre $X_0$ both from the geometric point of view, on the left, and tropical point of view, on the right.

\begin{figure} 
    \begin{center}   
    \begin{tikzpicture}[scale=1.4]
        \draw   
        (0,0) -- (0,2)       
        (-1.732, -1) -- (0,0)
        
        (1.732, -1) -- (0,0);
        \draw (-1.3,1) node[anchor=center]{$Y_1$};
        \draw (1.3,1) node[anchor=center]{$Y_2$};
        \draw (0,-1) node[anchor=center]{$Y_3$};
        \draw (0,2.2) node[anchor=center]{$Y_1\cap Y_2$};
        \draw (-1.832, -1.2) node[anchor=center]{$Y_1\cap Y_3$};
        \draw (1.832, -1.2) node[anchor=center]{$Y_2\cap Y_3$};
        \draw (0.2, 0.2) node[anchor=center]{$Y_1\cap Y_2 \cap Y_3$};
    \end{tikzpicture}
    \hspace{2cm}
    \begin{tikzpicture}[scale=1.2]
\draw   

        (-1.732, -1) -- (0,2)       
        (-1.732, -1) -- (1.732, -1)
        
        (1.732, -1) -- (0,2);
        \draw (-1.4,0.8) node[anchor=center]{$Y_1\cap Y_2$};
        \draw (1.4,0.8) node[anchor=center]{$Y_2\cap Y_3$};
        \draw (0,-1.3) node[anchor=center]{$Y_1\cap Y_3$};
        \filldraw[black] (0,2) circle (2pt) ;
        \filldraw[black] (-1.732, -1) circle (2pt) ;
        \filldraw[black] (1.732, -1) circle (2pt) ;
        \draw (0,2.3) node[anchor=center]{$Y_1$};
        \draw (-1.832, -1.3) node[anchor=center]{$Y_3$};
        \draw (1.832, -1.3) node[anchor=center]{$Y_2$};
        \draw (0, 0) node[anchor=center]{$Y_1\cap Y_2 \cap Y_3$};
\end{tikzpicture}
    \end{center}
    \caption{Geometric and tropical pictures of the special fibre $X_0$.}
    \label{geom and trop special fibre}
\end{figure}
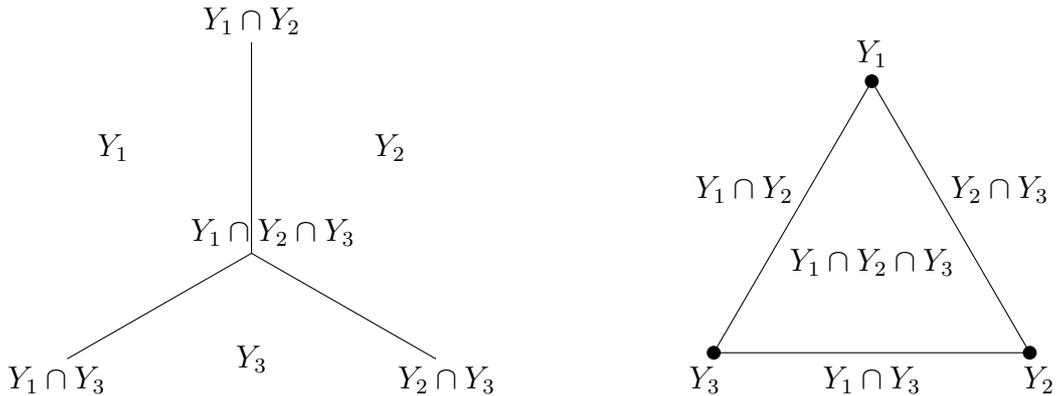

\subsubsection*{Output of expanded construction.} The expanded degeneration $X[n]\to C[n]$ which we construct in this section has the following properties:
\begin{itemize}
    \item The morphism $X[n]\to C[n]$ is projective and $G$-equivariant.
    \item Etale locally, $X[n]$ is a subvariety of $(X\times_{\AA^1} \AA^{n+1})\times (\PP^1)^{2n}$.
\end{itemize}

\subsection{The blow-ups}\label{BUs}

In the following, we construct expanded degenerations by enlarging the base $C$ and making sequences of blow-ups in the family over this larger base. We start by taking a copy of $\AA^{n+1}$, with elements labelled $(t_1, \ldots, t_{n+1}) \in \AA^{n+1}.$ Throughout this work, we shall refer to the entries $t_i$ as \textit{basis directions}. Now, let $X\times_{\AA^1} \AA^{n+1}$ be the fibre product given by the map $X\to C\cong \AA^1$ and the product
\[
(t_1, \ldots, t_{n+1}) \longmapsto t_1\cdots t_{n+1}.
\]
In this expanded degeneration construction, we will be blowing up schemes along Weil divisors. A consequence of the way these blow-ups are defined is that the blow-up morphisms contract only components of codimension at least 2.

\subsubsection*{First blow-up of the $Y_1$ component.}

 We start by blowing up $Y_{1} \times_{\AA^1} V(t_1)$ inside $X\times_{\AA^1} \AA^{n+1}$, where $V(t_i)$ denotes the locus where $t_i=0$.

 \medskip
 \emph{Notation.} We name the space resulting from this blow-up $X_{(1,0)}$ to signify we have blown up the component $Y_1$ once and the component $Y_2$ zero times.

 \medskip
 We can describe this blow-up locally in the following way. The ideal of the blow-up is $I_1=\langle x,t_1\rangle$. Globally this will correspond to an ideal sheaf $\cI_1$. Then there is a surjective map of graded rings
\[
A[x_0^{(1)},x_1^{(1)}] \longrightarrow S_1 = \bigoplus_{n\geq 0} I_1^n
\]
which maps
\[
x_0^{(1)}\longmapsto x \ \textrm{ and } \ x_1^{(1)}\longmapsto t_1,
\]
where $A\coloneqq k[x,y,z,t_1,\ldots, t_{n+1}]/(xyz - t_1\cdots t_{n+1})$. This induces an embedding 
\[
\Proj(S_1) \longhookrightarrow \Proj A[x_0^{(1)},x_1^{(1)}] = \PP^1\times\Spec A 
\]
and $\Proj(S_1)$, i.e.\ our blow-up, is cut out in $\PP^1\times\Spec A $ by the equations
\begin{align*}
    &x_0^{(1)}t_1 = xx_1^{(1)} \\
    &x_0^{(1)} yz = x_1^{(1)} t_2  \cdots t_{n+1} .
\end{align*}

\begin{proposition}\label{prop 1st BU}
    $X_{(1,0)}$ is isomorphic to $X\times_{\AA^1} \AA^{n+1}$ away from the locus where $t_1=t_i = 0$, for any $i\neq 1$.
\end{proposition}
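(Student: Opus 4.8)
The plan is to work with the explicit local equations for the blow-up given above and show that away from the locus where $t_1 = t_i = 0$ for some $i \neq 1$, the blow-up morphism $X_{(1,0)} \to X \times_{\AA^1} \AA^{n+1}$ is an isomorphism. The key point is that a blow-up along an ideal sheaf $\cI_1$ is an isomorphism precisely over the open locus where $\cI_1$ is locally principal (invertible), so the whole statement reduces to checking where the ideal $I_1 = \langle x, t_1\rangle$ fails to be locally generated by a single element inside $A = k[x,y,z,t_1,\ldots,t_{n+1}]/(xyz - t_1\cdots t_{n+1})$.

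First I would recall that $\Bl_{\cI_1}(X\times_{\AA^1}\AA^{n+1}) \to X\times_{\AA^1}\AA^{n+1}$ is an isomorphism over the open set $U$ where $\cI_1$ is invertible, and that the exceptional locus is exactly the support of $\cO/\cI_1$ where $\cI_1$ is not invertible; this is standard (e.g. Hartshorne II.7). Then I would identify $V(\cI_1)$: it is $Y_1 \times_{\AA^1} V(t_1)$, i.e. the locus $x = t_1 = 0$. Away from this locus the blow-up is trivially an isomorphism, so the content is to analyze the behaviour \emph{on} $V(\cI_1)$ and show the morphism is still an isomorphism there except along $\{t_1 = t_i = 0\}$ for $i\neq 1$. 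For this I would pass to the two standard affine charts of $\PP^1 \times \Spec A$ coming from $x_0^{(1)} \neq 0$ and $x_1^{(1)} \neq 0$, using the two cut-out equations
\[
x_0^{(1)} t_1 = x x_1^{(1)}, \qquad x_0^{(1)} yz = x_1^{(1)} t_2\cdots t_{n+1}.
\]
In the chart $x_0^{(1)} = 1$ we get $t_1 = x x_1^{(1)}$, so $t_1 \in \langle x \rangle$ and the ideal is principal generated by $x$; one checks the chart maps isomorphically onto the locus where $x$ is a nonzerodivisor realizing $t_1$, in particular onto a neighbourhood of any point where $x \neq 0$. In the chart $x_1^{(1)} = 1$ we get $x = x_0^{(1)} t_1$ and the second relation becomes $x_0^{(1)} yz = t_2\cdots t_{n+1}$; here the ideal is generated by $t_1$, and this chart covers a neighbourhood of points where $t_1 \neq 0$. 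Together these two charts show the morphism is an isomorphism over every point of $X\times_{\AA^1}\AA^{n+1}$ with $x \neq 0$ or $t_1 \neq 0$, i.e. away from $V(\cI_1) = \{x = t_1 = 0\}$, and I then need to push a little further: on $V(\cI_1)$ itself, away from the point(s) where additionally some $t_i = 0$ ($i \neq 1$), the relation $xyz = t_1\cdots t_{n+1}$ shows that near such a point one of $y, z, t_2,\ldots,t_{n+1}$ is a unit, and using the defining relation one can solve to see that $\langle x, t_1\rangle$ is in fact principal there as well (essentially because $X$ itself is smooth there and $Y_1 \times_{\AA^1} V(t_1)$ is Cartier away from the deeper strata).

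The main obstacle I expect is the last step: carefully checking, on the locus $x = t_1 = 0$ but $t_i \neq 0$ for all $i \neq 1$, that the Weil divisor $Y_1 \times_{\AA^1} V(t_1)$ is actually Cartier, so that the blow-up does nothing there. This requires using the equation $xyz = t_1\cdots t_{n+1}$ of the ambient space: away from the deeper strata one of the other coordinates is invertible, and then from $xyz = t_1(t_2\cdots t_{n+1})$ one deduces that $x$ and $t_1$ differ by a unit times a common factor, forcing $\langle x, t_1 \rangle$ to be principal; hence the blow-up is an isomorphism there. Once principality is established on the full complement of $\{t_1 = t_i = 0 : i\neq 1\}$, the proposition follows immediately from the universal property of blowing up an invertible ideal sheaf. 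I would also remark that the excluded locus is genuinely where the blow-up is nontrivial — this matches the indexing convention "$Y_1$ blown up once" — but for the proposition as stated only the isomorphism on the complement is needed.
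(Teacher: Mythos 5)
Your proposal is correct and follows essentially the same route as the paper: the paper's proof also rests on the observation that away from $\{t_1=t_i=0,\ i\neq 1\}$ the blow-up centre either misses the fibre or lies in the smooth locus of the total space (so the Weil divisor $Y_1\times_{\AA^1}V(t_1)$ is Cartier and the blow-up does nothing), which is exactly what your ideal-theoretic check that $\langle x,t_1\rangle=\langle x\rangle$ is invertible there makes explicit. Your chart-by-chart verification is simply a more detailed rendering of the paper's brief smoothness argument, so no changes are needed.
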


\begin{proof}
    Let $X_{(1,0)}\to \AA^{n+1}$ be the natural projection. Then the fibres above $(t_1,\ldots, t_{n+1})$ where $t_1$ is nonzero are still the same after the blow-up and so are the fibres where $t_1=0$ and all the other $t_i$ are nonzero because the total space is still smooth at all points of these fibres. However, when $t_1=0$ and at least one of the other $t_i$ is zero, then we get singularities of the total space appearing in the fibre of $X\times_{\AA^1} \AA^{n+1}$ and the blow-up causes a new component to appear around the $Y_1$ component.
\end{proof}

 \emph{Notation.} We denote by $\Delta_1^{(1)}$ the new component introduced by the blow-up which is described in the proof of Proposition \ref{prop 1st BU} above.

 \medskip
 This can be seen in Figure \ref{geom and trop picture of (x,t1) BU}, where the added red vertices in the tropical picture correspond to the two irreducible components of $\Delta_1^{(1)}$ and the edge connecting them corresponds to the intersection of these irreducible components.

\begin{figure} 
    \begin{center}   
    \begin{tikzpicture}[scale=1.4]
        \draw   
        (0,0) -- (0,2)       
        (-1.732, -1) -- (0,0)
        (2.6, -1.5) -- (0,0)
        (0.866,-0.5) -- (0.866,2)
        (0.866,-0.5) -- (-1.732,-2)
        ;
        \draw (-1.3,1) node[anchor=center]{$Y_1$};
        \draw (1.8,1) node[anchor=center]{$Y_2$};
        \draw (0.5,-1.5) node[anchor=center]{$Y_3$};
        \draw (0.433,1) node[anchor=center, color = red]{$\Delta_1^{(1)}$};
        \draw (-0.866,-1) node[anchor=center, color = red]{$\Delta_1^{(1)}$};
        
    \end{tikzpicture}
    \hspace{2cm}
    \begin{tikzpicture}[scale=1.2]
\draw   

        (-1.732, -1) -- (0,2)       
        (-1.732, -1) -- (1.732, -1)
        
        (1.732, -1) -- (0,2)
        
        ;
        \draw[red] (-0.866,0.5) -- (0.866,0.5);
        \draw (-1.3,0.5) node[anchor=center, color = red]{$\Delta_1^{(1)}$};
        \draw (1.4,0.5) node[anchor=center, color = red]{$\Delta_1^{(1)}$};
        \filldraw[red] (-0.866,0.5) circle (2pt) ;
        \filldraw[red] (0.866,0.5) circle (2pt) ;
        \filldraw[black] (0,2) circle (2pt) ;
        \filldraw[black] (-1.732, -1) circle (2pt) ;
        \filldraw[black] (1.732, -1) circle (2pt) ;
        \draw (0,2.3) node[anchor=center]{$Y_1$};
        \draw (-1.832, -1.3) node[anchor=center]{$Y_3$};
        \draw (1.832, -1.3) node[anchor=center]{$Y_2$};
\end{tikzpicture}
    \end{center}
    \caption{Geometric and tropical pictures of a fibre in $X_{(1,0)}$ where $t_1=t_i=0$.}
    \label{geom and trop picture of (x,t1) BU}
\end{figure}
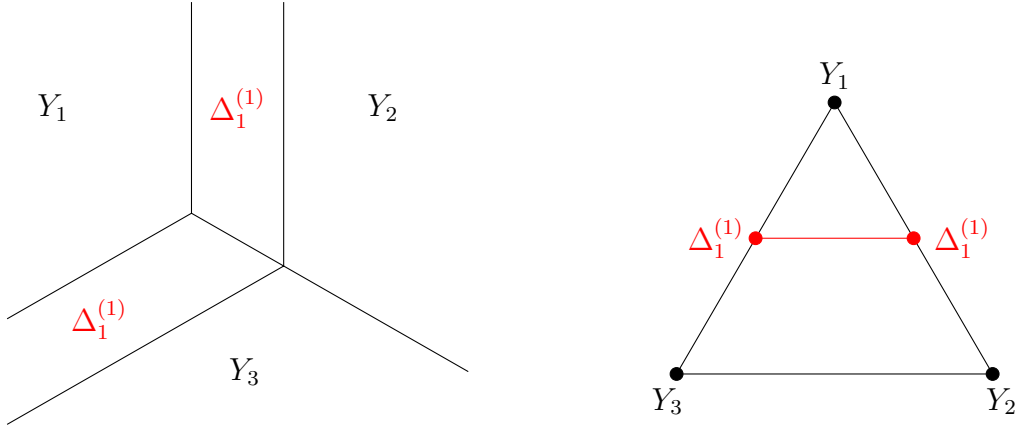

\subsubsection*{Further blow-ups of the $Y_1$ component.}
Let $b_{(1,0)} \colon X_{(1,0)} \to X\times_{\AA^1}\AA^{n+1} $ be the map defined by the first blow-up given above. We then proceed to blow-up $b_{(1,0)}^*(Y_{1} \times_{\AA^1} V(t_2))$ inside $X_{(1,0)}$. We name the resulting space $X_{(2,0)}$ and the composition of both blow-ups is denoted $b_{(2,0)} \colon X_{(2,0)} \to X\times_{\AA^1}\AA^{n+1}$. We continue to blow up each $b_{(k-1,0)}^*(Y_{1} \times_{\AA^1} V(t_k))$ inside $X_{(k-1,0)}$ for each $k\leq n$. The resulting space is denoted $X_{(n,0)}$. Finally, we denote by 
\[
\beta^1_{(k,0)} \colon X_{(k,0)} \longrightarrow X_{(k-1,0)}
\]
the morphisms corresponding to each individual blow-up. We therefore have the equality
\[
\beta^1_{(k,0)}\circ \cdots \circ \beta^1_{(1,0)} = b_{(k,0)}
\]
We now fix the following terminology.

\begin{definition}
    We say that a dimension 2 component in a fibre of $X_{(k,0)}\to C\times_{\AA^1} \AA^{n+1}$ is a $\Delta_1$-\textit{component} if it is contracted by the morphism $\beta^1_{(i,0)}$ for some $i\leq k$. Moreover if a $\Delta_1$-component in a fibre is contracted by such a map then we say it is \textit{expanded out} in this fibre.
\end{definition}

We label by $\Delta_1^{(k)}$ the $\Delta_1$-component resulting from the $k$-th blow-up. The fibre where $t_i=0$ for all $i\in \{1,\ldots, n+1\}$ has exactly $n$ expanded  $\Delta_1$-components. The equations of the blow-ups in local coordinates are as follows:

\begin{align}\label{BU1 eqns}
    &x_0^{(1)}t_1 = xx_1^{(1)}, \nonumber \\
    &x_1^{(k-1)}x_0^{(k)}t_k = x_0^{(k-1)}x_1^{(k)}, \qquad \textrm{ for } \ 2\leq k\leq n, \\
    &x_0^{(n)} yz = x_1^{(n)} t_{n+1}. \nonumber
\end{align}

\begin{remark}
    If we restrict $X_0$ to only the components $Y_1$ and $Y_2$, i.e.\ restrict the original degeneration to $\Spec k[x,y,z,t]/(xy-t)$, we get back exactly the blow-ups of Gulbrandsen, Halle and Hulek \cite{GHH}.
\end{remark}

\medskip
In fibres of the construction where $\Delta_1^{(k)}$, for some $k$, is not expanded out, i.e.\ not contracted by some map $\beta^1_{(i,0)}$, we will want to think of it in the following way.
\begin{definition}\label{equal to}
   When $t_k=0$ and all other $t_i$ are nonzero, we consider $\Delta_1^{(k)}$ and all $\Delta_1^{(j)}$ for $j\geq k$ as being \textit{equal to}  $Y_1$, meaning that the projective coordinates introduced by the $j$-th blow-up are proportional to $1/yz$. This follows from the equality
\[
x_0^{(j)} yz = x_1^{(j)} t_{j+1} \cdots t_{n+1},
\]
obtained from the above equations of the blow-ups. Similarly, the components $\Delta_1^{(j)}$ with $j<k$ are considered to be \textit{equal to} the union $Y_2\cup Y_3$, which follows from the equality
\[
x_0^{(j)}t_1\cdots t_j = xx_1^{(j)},
\]
obtained from the equations of the blow-up. When $t_{n+1}=0$ and all other $t_k$ are nonzero, then all $\Delta_1^{(k)}$ are \textit{equal to} the union $Y_2\cup Y_3$. 
\end{definition}

\subsubsection*{Blow-ups of the $Y_2$ component.}

For the component $Y_2$ we can make similar definitions to the above. We blow up $b_{(n,0)}^*Y_{2} \times_{\AA^1} V(t_{n+1})$ in $X_{(n,0)}$ and name the resulting space $X_{(n,1)}$. Let $b_{(n,k)} \colon X_{(n,k)} \to X\times_{\AA^1}\AA^{n+1} $ be the composition of the $n$ blow-ups of $Y_1$ and the first $k$ blow-ups of $Y_2$ on $X_{(n,0)}$. Similarly to the above, but with the order of the basis directions reversed, we blow up $b_{(n,k-1)}^*(Y_{2} \times_{\AA^1} V(t_{n+2-k}))$ in $X_{(n,k-1)}$ for each $k\leq n$.

\medskip
The equations of the blow-ups in local coordinates are as follows, where $(y_0^{(k)}:y_1^{(k)})$ are the coordinates of the $\PP^1$ introduced by the $k$-th blow-up:
\begin{align}\label{BU2 eqns}
    &y_0^{(1)}t_{n+1} = yy_1^{(1)}, \nonumber \\
    &y_1^{(k-1)}y_0^{(k)}t_{n+2-k} = y_0^{(k-1)}y_1^{(k)} \ \textrm{ for } \ 2\leq k\leq n, \\
    &y_0^{(n)} xz = y_1^{(n)} t_{1} \nonumber \\
    &x_0^{(k)} y_0^{(n+1-k)} z = x_1^{(k)} y_1^{(n+1-k)}. \nonumber
\end{align}

\emph{Notation.} The components introduced by these new blow-ups are labelled $\Delta_2^{(k)}$. To simplify notation, we will denote the base $\AA^{n+1}\times_{\AA^1} C$ by $C[n]$, the expanded construction $X_{(n,n)}$ by $X[n]$ and the natural projection to the original family $X$ by $\pi: X[n] \to X$.

\medskip
We have blow-up morphisms
\begin{align*}
    &\beta^1_{(i,j)} \colon X_{(i,j)} \longrightarrow X_{(i-1,j)}, \\
    &\beta^2_{(i,j)} \colon X_{(i,j)} \longrightarrow X_{(i,j-1)},
\end{align*}
corresponding to each individual blow-up of a pullback of the $Y_1$-component and $Y_2$-component respectively. The composition of all the blow-up morphisms is denoted
\[
b\coloneqq \beta^2_{(n,n)}\circ \cdots \circ\beta^2_{(n,1)}\circ \cdots \circ \beta^1_{(n,0)}\circ \cdots \circ \beta^1_{(1,0)} \colon X[n]\to X\times_{\AA^1} \AA^{n+1}.
\]
As the following proposition shows, the spaces $X_{(i,j)}$ are well-defined, as the order in which we make the blow-ups, i.e.\ expand out the $\Delta_1$ or the $\Delta_2$-components first, makes no difference. We can therefore express the space $X_{(m_1,m_2)}$ as the space $X_{(m_1,0)}$ on which we perform a sequence of blow-ups of the pullback of $Y_2$ or as the space $X_{(0,m_2)}$ on which we perform a sequence of blow-ups of the pullback of $Y_1$, etc.

\begin{proposition}
    The following blow-up diagram commutes
    \begin{figure}[H]
    \centering
        \begin{tikzcd}
        &\vdots \arrow[d] &\vdots \arrow[d] &\vdots \arrow[d] \\
        \cdots \arrow[r] &X_{(2,2)} \arrow[r,"\beta^2_{(2,2)}"] &X_{(2,1)} \arrow[r,"\beta^2_{(2,1)}"] \arrow[d,"\beta^1_{(2,1)}"] &X_{(2,0)} \arrow[d,"\beta^1_{(2,0)}"] \\
        \cdots \arrow[r] &X_{(1,2)} \arrow[r,"\beta^2_{(1,2)}"] &X_{(1,1)} \arrow[r,"\beta^2_{(1,1)}"] \arrow[d,"\beta^1_{(1,1)}"] &X_{(1,0)} \arrow[d, "\beta^1_{(1,0)}"] \\
         \cdots \arrow[r] &X_{(0,2)} \arrow[r,"\beta^2_{(0,2)}"] &X_{(0,1)} \arrow[r,"\beta^2_{(0,1)}"] &X_{(0,0)} = X\times_{\AA^1}\AA^{n+1}.
        \end{tikzcd}
        \label{fig:my_label}
    \end{figure}
    \end{proposition}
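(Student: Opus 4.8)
The plan is to verify commutativity of each elementary square in the diagram, since the full diagram is built from the squares
\[
\begin{tikzcd}
X_{(i,j)} \arrow[r,"\beta^2_{(i,j)}"] \arrow[d,"\beta^1_{(i,j)}"] &X_{(i,j-1)} \arrow[d,"\beta^1_{(i,j-1)}"] \\
X_{(i-1,j)} \arrow[r,"\beta^2_{(i-1,j)}"] &X_{(i-1,j-1)}
\end{tikzcd}
\]
and once each square commutes the whole diagram does. The content of such a square is that blowing up the pullback of $Y_1\times_{\AA^1} V(t_i)$ and then the (further pullback of the) $Y_2\times_{\AA^1} V(t_{n+2-j})$ locus gives the same space, with the same maps down to $X\times_{\AA^1}\AA^{n+1}$, as performing these two blow-ups in the opposite order. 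The standard tool here is the commutativity of blow-ups along ideal sheaves whose vanishing loci are disjoint, or more precisely the fact that $\Bl_{Z_1}\Bl_{Z_2}W \cong \Bl_{Z_2}\Bl_{Z_1}W$ whenever the strict transforms involved are cut out by the pulled-back ideals and the two centres meet transversally (or are disjoint) in $W$; see for instance the usual "blow-ups in different centres commute" lemma.

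First I would reduce to the étale local model $\Spec k[x,y,z,t_1,\dots,t_{n+1}]/(xyz - t_1\cdots t_{n+1})$, since all the blow-ups are defined étale locally and the gluing is canonical; commutativity étale locally then implies it globally. In this local model the centre of $\beta^1$ at stage $(i,j)$ is (the strict transform of) $V(x, t_i)$ and the centre of $\beta^2$ is (the strict transform of) $V(y, t_{n+2-j})$. Next I would observe that the two centres $\{x = t_i = 0\}$ and $\{y = t_{n+2-j} = 0\}$ are cut out by equations in disjoint sets of variables, so after any preceding blow-ups their (strict or total) transforms continue to be given by the pullbacks of these ideals, and the two ideal sheaves $\cI^1_{(i,j)}$ and $\cI^2_{(i,j)}$ have the property that neither contains the other and their sum is a "transverse" intersection. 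Concretely, using the explicit equations \eqref{BU1 eqns} and \eqref{BU2 eqns}, the $\PP^1$-coordinates $(x_0^{(i)}:x_1^{(i)})$ introduced by $\beta^1$ and $(y_0^{(j)}:y_1^{(j)})$ introduced by $\beta^2$ are independent, and the cross-term relation $x_0^{(k)}y_0^{(n+1-k)}z = x_1^{(k)}y_1^{(n+1-k)}$ is symmetric in the two families of coordinates; hence the fibre product description of $X_{(i,j)}$ as a subvariety of $(X\times_{\AA^1}\AA^{n+1})\times(\PP^1)^{i+j}$ is manifestly independent of the order in which the $\PP^1$-factors were adjoined.

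The cleanest way to package this is to exhibit $X_{(i,j)}$ directly as the closed subscheme of $(X\times_{\AA^1}\AA^{n+1})\times (\PP^1)^{i}\times(\PP^1)^{j}$ cut out by the union of the equation systems \eqref{BU1 eqns} (truncated at level $i$, with $t_{n+1}\cdots$ replaced appropriately) and \eqref{BU2 eqns} (truncated at level $j$), together with the cross relations; then both composite maps $\beta^1\circ\beta^2$ and $\beta^2\circ\beta^1$ are identified with the projection that forgets the last adjoined $\PP^1$ and then the previous one, and commutativity is immediate from associativity of projections. I would then just need to check that this explicit subscheme really is the iterated blow-up in either order — i.e. that blowing up $V(x,t_i)$ adds exactly the $\PP^1$ with the stated equations and does not disturb the $Y_2$-equations, and vice versa — which is the local computation already carried out for the first blow-up in Proposition~\ref{prop 1st BU} and its successors, applied symmetrically.

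The main obstacle I expect is bookkeeping rather than conceptual: one must be careful that at stage $(i,j)$ the centre being blown up is the \emph{strict} (total) transform of $Y_1\times_{\AA^1}V(t_i)$, not the naive vanishing locus, and check that this strict transform is still a Cartier-away-from-codimension-$2$ Weil divisor cut out by the pulled-back ideal after the intervening $Y_2$-blow-ups (and symmetrically). Establishing that the two centres remain "independent" — so that the exceptional $\PP^1$'s genuinely do not interact beyond the symmetric cross-relation — is the crux; once that independence is in hand, commutativity of each elementary square, and hence of the whole diagram, follows formally.
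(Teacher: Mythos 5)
Your strategy is essentially the paper's: the paper also proves commutativity by an explicit étale-local computation, blowing up in both orders and observing that the resulting equations \eqref{BU1 eqns} and \eqref{BU2 eqns} coincide (it does this for $X_{(1,1)}$ and asserts the general case), which is exactly your ``exhibit $X_{(i,j)}$ as the explicit subscheme of $(X\times_{\AA^1}\AA^{n+1})\times(\PP^1)^{i+j}$ and note the description is order-independent'' packaging; your square-by-square reduction is a reasonable way to organise the generalisation the paper leaves implicit.

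One caveat on the heuristic you lean on before the explicit check: the claim that the two centres are cut out by disjoint sets of variables is not true for every square. The $\beta^1$-centre at stage $i$ involves $(x,t_i)$ and the $\beta^2$-centre at stage $j$ involves $(y,t_{n+2-j})$, and these share the coordinate $t_i$ precisely when $i+j=n+2$, which occurs for $n\geq 2$; moreover, even when the $t$-indices differ the centres are not disjoint subvarieties (they meet along $V(x,y,t_i,t_{n+2-j})$), and the relation $xyz=t_1\cdots t_{n+1}$ couples all the variables, so there is no genuine product decomposition to invoke. Consequently the general ``blow-ups in disjoint or transverse centres commute'' lemma does not apply off the shelf to those squares, and the burden of proof falls entirely on the explicit computation of the blow-up ideals and charts in both orders --- which is what you propose as the ``cleanest packaging'' and what the paper actually does, so the proof goes through, but you should present that computation as the argument itself rather than as a confirmation of an independence that is only approximately true.
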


\begin{proof}
    We show that the space $X[1] = X_{(1,1)}$ can be constructed by first blowing up along $Y_1$ and then $Y_2$ or by reversing the order of these operations. Indeed, if we start by blowing up $Y_{1} \times_{\AA^1} V(t_{1})$ in $X\times_{\AA^1}\AA^{n+1}$, we obtain the étale local equations \eqref{BU1 eqns}. This gives us the space $X_{(1,0)}$. Then blowing up $b_{(1,0)}^*Y_{2} \times_{\AA^1} V(t_{2})$ in $X_{(1,0)}$ yields the étale local equations \eqref{BU2 eqns} and by definition this gives us the space $X_{(1,1)}$.

    \medskip
    Now, if we start by blowing up $Y_{2} \times_{\AA^1} V(t_{2})$ in $X\times_{\AA^1}\AA^{n+1}$, we obtain étale local equations
    \begin{align*}
        &y_0^{(1)}t_{n+1} = yy_1^{(1)},  \\
        &y_0^{(1)} xz = y_1^{(1)} t_{1}
    \end{align*}
    and this yields the space $X_{(0,1)}$. If we then blow up $b_{(0,1)}^*Y_{1} \times_{\AA^1} V(t_{1})$ in $X_{(0,1)}$,
    we shall obtain the equations
    \begin{align*}
        &x_0^{(1)} y_0^{(1)} z = x_1^{(1)} y_1^{(1)} \\
        &x_0^{(1)}t_1 = xx_1^{(1)}, \\
        &x_0^{(1)} yz = x_1^{(1)} t_{2}.
    \end{align*}
    But these are exactly the equations \eqref{BU1 eqns} and \eqref{BU2 eqns}, so the resulting space is again $X[1] = X_{(1,1)}$. This argument can be easily generalised to $X[n]$ for any $n$.
\end{proof}

\begin{proposition}
    If we take $X\to C$ to be the étale local model
    \[
    \Spec k[x,y,z,t]/(xyz-t) \longrightarrow \Spec k[t],
    \]
    the corresponding scheme $X[n]$ obtained after the sequence of blow-ups $b$ is a subvariety of $(X\times_{\AA^1} \AA^{n+1})\times (\PP^1)^{2n}$ cut out by the local equations \eqref{BU1 eqns} and \eqref{BU2 eqns}.
\end{proposition}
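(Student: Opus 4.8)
The plan is to argue by induction on the number of blow-ups, at each stage using the standard realization of the blow-up of an affine scheme along an ideal with two generators as a closed subscheme of a product with $\PP^1$. Recall that if $W=\Spec B$ is affine and $\cJ=\langle f,g\rangle\subseteq B$, then $\Bl_\cJ W=\Proj\bigl(\bigoplus_{d\geq 0}\cJ^{d}\bigr)$, and the surjection of graded $B$-algebras $B[u_0,u_1]\twoheadrightarrow\bigoplus_{d\geq 0}\cJ^{d}$ sending $u_0\mapsto f$, $u_1\mapsto g$ in degree $1$ exhibits $\Bl_\cJ W$ as the closed subscheme of $\Proj B[u_0,u_1]=\PP^{1}\times W$ defined by the image of the kernel. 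Composing the $2n$ closed immersions obtained this way — one for each blow-up in the sequence $b$, the $\PP^1$ of the $k$-th step carrying coordinates $(x_0^{(k)}:x_1^{(k)})$ or $(y_0^{(k)}:y_1^{(k)})$ — already places $X[n]$ inside $(X\times_{\AA^1}\AA^{n+1})\times(\PP^{1})^{2n}$, and $X[n]$ is integral, being an iterated blow-up of the variety $X\times_{\AA^1}\AA^{n+1}$ along nonzero ideals. What remains is to identify the kernel at each stage with the corresponding equation(s) of \eqref{BU1 eqns} and \eqref{BU2 eqns}.

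For the inductive step I would proceed uniformly. After some blow-ups one reaches a scheme admitting an affine chart $\Spec B$ with $B=P/(h)$ a quotient of a polynomial ring by a single equation $h$, namely the current strict transform of $xyz-t_1\cdots t_{n+1}$; and the center of the next blow-up is, on a suitable chart, of the form $\cJ=\langle f,t_k\rangle$, with $f$ the local equation of the strict transform of the $Y_1$- or $Y_2$-component being expanded. The first thing to check — and the point demanding care — is that the total transform $b^{\ast}(Y_i\times_{\AA^1}V(t_k))$ really does cut out the ideal $\langle f,t_k\rangle$ on this chart: the centers are Weil but not Cartier divisors in these singular total spaces, so one cannot simply invoke that pulling back preserves ideals, and this must be verified directly from the explicit charts produced by the earlier steps. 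Granting this, the syzygy module of $(f,t_k)$ over $B$ is easy to compute: any relation $af+bt_k=0$ in $B$ lifts to $af+bt_k=\lambda h$ in $P$, where $f,t_k$ is a regular sequence, so every syzygy over $B$ is a $B$-combination of the Koszul syzygy $(t_k,-f)$ and the syzygy $(g',-f')$ coming from a factorization $h=f\,g'-t_k\,f'$ of the hypersurface equation. These homogenize precisely to the ``Rees'' equation (of the shape $x_0^{(1)}t_1=x\,x_1^{(1)}$, $x_1^{(k-1)}x_0^{(k)}t_k=x_0^{(k-1)}x_1^{(k)}$, or the mixed relation $x_0^{(k)}y_0^{(n+1-k)}z=x_1^{(k)}y_1^{(n+1-k)}$) and to the next ``strict transform'' equation (such as $x_0^{(n)}yz=x_1^{(n)}t_{n+1}$) of \eqref{BU1 eqns} and \eqref{BU2 eqns}.

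The main obstacle is to upgrade ``the syzygies are generated by these two elements'' to ``the Rees algebra equals $B[u_0,u_1]$ modulo the ideal generated by the two corresponding linear forms'', i.e.\ to rule out relations of degree $>1$ — equivalently, that $\langle f,t_k\rangle$ is of linear type at this stage. I would settle this by passing to the two standard affine opens of the new $\PP^1$: on $\{x_0^{(k)}\neq 0\}$ the Rees equation lets one eliminate $t_k=(x_1^{(k)}/x_0^{(k)})f$, after which the pulled-back hypersurface equation visibly acquires the factor $f$ and the strict-transform equation is exactly what survives after cancelling it, so this open is $\Spec$ of the localized Rees algebra; the open $\{x_1^{(k)}\neq 0\}$ is treated symmetrically with $f$ and $t_k$ interchanged, and one checks the two presentations agree on the overlap. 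This identifies the subscheme of $(\PP^1)^{2n}\times(X\times_{\AA^1}\AA^{n+1})$ cut out by the listed equations with the iterated blow-up, chart by chart and hence globally. The rest is bookkeeping: by the commutativity of the blow-up diagram proved above the order of the operations is immaterial, and each step is étale-locally governed by the same local computation as the first one (analysed in detail around Proposition \ref{prop 1st BU}), so I expect the induction to go through with no new phenomena.
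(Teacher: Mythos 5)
Your proposal is correct and takes essentially the same route as the paper: the paper's proof is a one-line appeal to the local $\Proj$/Rees-algebra descriptions of the individual blow-ups given earlier in Section \ref{BUs}, which is exactly the chart-by-chart computation you spell out and iterate. Your extra care about pulling back the Weil-divisor centres and about the ideals being of linear type (no relations of degree $>1$ beyond the listed equations) simply makes explicit the details the paper leaves implicit in the word ``immediate''.
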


\begin{proof}
    This is immediate from the local description of the blow-ups above.
\end{proof}

\begin{proposition}
    The family $X[n]\to C[n]$ thus constructed is projective.
\end{proposition}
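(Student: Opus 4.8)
The plan is to read off projectivity of $X[n]\to C[n]$ from the factorisation
\[
X[n] = X_{(n,n)} \xrightarrow{\ b\ } X\times_{\AA^1}\AA^{n+1} \longrightarrow C[n],
\]
using that projectivity of morphisms is stable under base change and under composition. I would assemble two facts. First, the structure morphism $X\times_{\AA^1}\AA^{n+1}\to C[n]$ is projective: it is the base change of the projective morphism $X\to C$ (projectivity of $X\to C$ being part of the standing hypotheses) along the first projection $C[n] = C\times_{\AA^1}\AA^{n+1}\to C$, using the identification $X\times_C C[n] = X\times_{\AA^1}\AA^{n+1}$. Second, the morphism $b$ is projective: it is a composition of the finitely many blow-up maps $\beta^1_{(i,j)}$ and $\beta^2_{(i,j)}$, and each of these is the blow-up of a Noetherian scheme along a coherent ideal sheaf — the ideal sheaf of a pullback of a product $Y_\ell\times_{\AA^1}V(t_k)$ — hence is projective, being $\Proj$ of a Rees algebra generated in degree one by a coherent sheaf. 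Indeed the local computations of Section~\ref{BUs} already exhibit each blow-up as a closed subscheme of a product $\PP^1\times(\text{the previous stage})$, so beyond observing that the centres are genuine (coherent) closed subschemes there is nothing to verify.

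Granting these two facts, I would conclude: $b$ is projective as a composition of projective morphisms of $k$-schemes, and composing with the projective morphism $X\times_{\AA^1}\AA^{n+1}\to C[n]$ shows that $X[n]\to C[n]$ is projective. It is also worth recording the more concrete picture implicit here: since $X$ embeds $C$-projectively into $\PP^N\times C$ for some $N$, base change gives a closed embedding $X\times_{\AA^1}\AA^{n+1}\hookrightarrow\PP^N\times C[n]$, and the successive closed embeddings into $\PP^1$-bundles supplied by the blow-ups, followed by a Segre embedding, realise $X[n]$ as a closed subscheme of $\PP^M\times C[n]$ for a suitable $M$ (this is the global counterpart of the étale-local description of $X[n]$ as a subscheme of $(X\times_{\AA^1}\AA^{n+1})\times(\PP^1)^{2n}$ from the preceding proposition). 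Either formulation gives the claim; the first is shorter.

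The only point requiring care — and hence the \emph{main obstacle}, such as it is — is bookkeeping around the word \emph{projective}: one should fix a formulation of projectivity (e.g.\ admitting a closed immersion into a projective space over the base, noting that $C[n]$ is affine) for which base change and composition are both automatic, and one must remember that even though the centres of the blow-ups are Weil divisors that need not be Cartier, the blow-up of a \emph{coherent} ideal sheaf is still projective, being $\Proj$ of its Rees algebra. With these observations in hand the proof is routine.
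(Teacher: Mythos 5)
Your argument is essentially identical to the paper's proof: the same factorisation $X[n]\to X\times_{\AA^1}\AA^{n+1}\to C[n]$, with the first map projective as a composition of blow-ups (of coherent ideal sheaves of Weil divisor centres) and the second projective by base change from $X\to C$. You simply spell out the standard facts (Rees algebra description, stability of projectivity under composition and base change) that the paper leaves implicit, so the proposal is correct and matches the paper's route.
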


\begin{proof}
    The morphism $X\times_{\AA^1}\AA^{n+1} \to C[n]$ must be projective since $X\to C$ is projective. Then $X[n]\to X\times_{\AA^1}\AA^{n+1}$ is just a sequence of blow-ups along Weil divisors, hence projective. This proves projectivity of the morphism $X[n]\to C[n]$.
\end{proof}

\begin{remark}
    The issue with projectivity in Proposition 1.10 of \cite{GHH} only arises if the local descriptions of the blow-ups they use to create the family $X[n]\to C[n]$ do not glue globally to define blow-ups.
\end{remark}

We now extend the definition of $\Delta_1$-components to the schemes $X[n]$ and fix some additional terminology.
\begin{definition}\label{expanded definition}
    We say that a dimension 2 component of $X[n]\to C[n]$ is a $\Delta_i$-\textit{component} if it is contracted by the morphism $\beta^i_{(j,k)}$ for some $i, j, k$. Moreover if a $\Delta_i$-component in a fibre is contracted by such a map then we say it is \emph{expanded out in this fibre}. We say that a dimension 2 component of $X[n]$ is a $\Delta$-\textit{component} if it is a $\Delta_i$-component for some $i$. If it is expanded out in some fibre we may alternatively refer to it as an \textit{expanded component}. Similarly, we may extend Definition \ref{equal to} to say that a $\Delta$-component is \textit{equal to} a component $W$ of a fibre of $X[n]$ if the projective coordinates associated to this $\Delta$-component are proportional to the non-vanishing coordinates of $W$.
\end{definition}

\begin{definition}
    We say that a $\Delta_i$-component is of \textit{pure type} if it is not equal to any $\Delta_j$-component for $j\neq i$. Otherwise we say it is of \textit{mixed type}.
\end{definition}

\subsubsection*{Description of fibres of $X[n]\to C[n]$.}
In order to understand what these blow-ups look like, we describe the fibres of the scheme $X[n]$ over $C[n]$, where certain basis directions vanish.

\medskip
\emph{Only one basis direction vanishes.} If only one of the $t_i=0$ and all other basis directions are nonzero, then a fibre over such a point in the base is just a copy of the special fibre $X_0$.

\medskip
\emph{Two basis directions vanish.} Here, we consider fibres where $t_i=t_j=0$ for some $i<j$ and no other $t_k = 0$. The blow-ups of pullbacks of the $Y_1$-component cause exactly one $\Delta_1$-component to be expanded in such a fibre, and this expanded component is given by $\Delta_1^{(i)} = \ldots = \Delta_1^{(j-1)}$. In this case, the singularities of the total space occurring at the intersection of $Y_1$ and $Y_2$ have already been resolved by expanding out this $\Delta_1$-component. As the blow-ups of pullbacks of the $Y_2$-component also cause one $\Delta_2$-component to be expanded in this fibre, given by $\Delta_2^{(n+2-j)} = \ldots = \Delta_2^{(n+1-i)}$, we therefore have
\[
\Delta_1^{(i)} = \ldots = \Delta_1^{(j-1)} = \Delta_2^{(n+2-j)} = \ldots = \Delta_2^{(n+1-i)}
\]
in the $\pi^*((Y_1\cap Y_2)^\circ)$ locus of the fibre. This can be easily deduced from studying the equations of the blow-ups. In the $\pi^*((Y_1\cap Y_3)^\circ)$ locus of the fibre, we see a single expanded component of pure type given by $\Delta_1^{(i)} = \ldots = \Delta_1^{(j-1)}$. Similarly, in the $\pi^*((Y_2\cap Y_3)^\circ)$ locus of the fibre, we see a single expanded component of pure type given by $\Delta_2^{(n+1-j)} = \ldots = \Delta_2^{(n+1-i)}$. Finally, the component $\Delta_1^{(k)}$ is equal to the union $Y_2\cup Y_3$ for $k<i$ and $\Delta_1^{(l)}$ is equal to the component $Y_1$ if $l>j-1$. The situation for the $\Delta_2$ components is similar. This can be seen in Figure \ref{12}.

\begin{figure} 
    \begin{center}   
    \begin{tikzpicture}[scale=1.4]
        \draw   (-0.866,0.5) -- (0,0)
        (0,0) -- (0.866,0.5)       
        (-0.866,0.5) -- (-0.866,2)
        (0.866,0.5) -- (0.866,2)
        (-1.732, -1) -- (0,0)
        (-2.165,-0.249) -- (-0.866,0.5)
        
        (1.732, -1) -- (0,0)
        
        (2.165,-0.249) -- (0.866,0.5);
        \draw (-1.516,-0.375) node[anchor=center]{$\Delta_1^{(i)}$};
        \draw (1.516,-0.375) node[anchor=center]{$\Delta_2^{(j)}$};
        \draw (0,1.25) node[anchor=center]{$\Delta_1^{(i)} = \Delta_2^{(j)}$};
        \draw (-1.7,1) node[anchor=center]{$Y_1$};
        \draw (1.7,1) node[anchor=center]{$Y_2$};
        \draw (0,-0.8) node[anchor=center]{$Y_3$};
        
    \end{tikzpicture}
    \hspace{2cm}
    \begin{tikzpicture}[scale=1.4]
\draw   

        (-1.732, -1) -- (0,2)       
        (-1.732, -1) -- (1.732, -1)
        
        (1.732, -1) -- (0,2)
        
        ;
        \draw[red] (-0.866,0.5) -- (0.866,0.5);
        \draw[red] (0,-1) -- (0.866,0.5);
        \draw (-1.3,0.5) node[anchor=center, color = red]{$\Delta_1^{(i)}$};
        \draw (1.8,0.5) node[anchor=center, color = red]{$\Delta_1^{(i)} = \Delta_2^{(j)}$};
        \draw (0,-1.3) node[anchor=center, color = red]{$\Delta_2^{(j)}$};
        \filldraw[red] (-0.866,0.5) circle (2pt) ;
        \filldraw[red] (0.866,0.5) circle (2pt) ;
        \filldraw[black] (0,2) circle (2pt) ;
        \filldraw[black] (-1.732, -1) circle (2pt) ;
        \filldraw[black] (1.732, -1) circle (2pt) ;
        \filldraw[red] (0, -1) circle (2pt) ;
        \draw (0,2.3) node[anchor=center]{$Y_1$};
        \draw (-1.832, -1.3) node[anchor=center]{$Y_3$};
        \draw (1.832, -1.3) node[anchor=center]{$Y_2$};
\end{tikzpicture}
    \end{center}
    \caption{Geometric and tropical picture at $t_i=t_j=0$ in $X[n]$.}
    \label{12}
\end{figure}
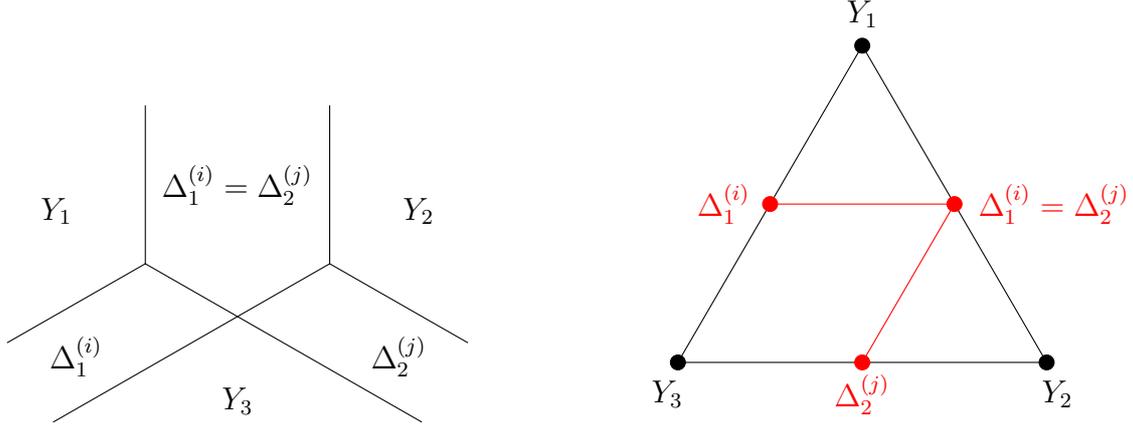

Before we continue we fix some terminology which will help us describe the expanded components.

\begin{definition}
    We refer to an irreducible component of a $\Delta$-component as a \textit{bubble}. The notions of two bubbles being \textit{equal} and a bubble being \textit{expanded out} in a certain fibre are as in Definitions \ref{equal to} and \ref{expanded definition}.
\end{definition}

\begin{figure} 
    \begin{center}   
    \begin{tikzpicture}[scale=1.6]
        \draw    (0.866,-0.5) -- (0,-1)
        (0,-1) -- (-0.866,-0.5)
        (0,0) -- (0,1.5)
        (0,0) -- (-0.866,-0.5)
        (0,0) -- (0.866,-0.5)
        (-0.866,-0.5) -- (-1.732,0)       
        (-1.732,0) -- (-1.732,1.5)
        (-1.732, -1) -- (-0.866,-0.5)
        (1.732, -1) -- (0.866,-0.5)
        (-0.866,-1.5) -- (0,-1)
        (0.866,-1.5) -- (0,-1)
        (-2.599, -0.5) -- (-1.732,0)
        (1.732, 0) -- (1.732,1.5)
        (1.732, 0) -- (0.866,-0.5)
        (1.732, 0) -- (2.599,-0.5)
        ;
        \draw (-1.732,-0.375) node[anchor=center]{$\Delta_1^{(j)}$};
        \draw (-0.866,-0.875) node[anchor=center]{$\Delta_1^{(i)}$};
        \draw (1.732,-0.375) node[anchor=center]{$\Delta_2^{(j)}$};
        \draw (0.866,-0.875) node[anchor=center]{$\Delta_2^{(k)}$};
        \draw (0.9,1) node[anchor=center]{$\Delta_1^{(i)} = \Delta_2^{(j)}$};
        \draw (-0.9,1) node[anchor=center]{$\Delta_1^{(j)} = \Delta_2^{(k)}$};
        \draw (-2.2,1) node[anchor=center]{$Y_1$};
        \draw (2.2,1) node[anchor=center]{$Y_2$};
        \draw (0,-1.5) node[anchor=center]{$Y_3$};
        \draw (0,-0.5) node[anchor=center]{$ \Delta_1^{(i)}= \Delta_2^{(k)}$};
        
    \end{tikzpicture} \\
    \vspace{1cm}
    \hspace{1.5cm}
    \begin{tikzpicture}[scale=1.3]
\draw   

        (-1.732, -1) -- (0,2)       
        (-1.732, -1) -- (1.732, -1)
        
        (1.732, -1) -- (0,2)
        
        ;
        \draw[red] (-0.433,1.25) -- (0.433,1.25);
        \draw[red] (-0.866,-1) -- (0.433,1.25);
        \draw[red] (-1.3,-0.25) -- (1.3,-0.25);
        \draw[red] (0.866,-1) -- (1.3,-0.25);
        \draw (-1.7,-0.25) node[anchor=center, color = red]{$\Delta_1^{(i)}$};
        \draw (-0.833,1.25) node[anchor=center, color = red]{$\Delta_1^{(j)}$};
        \draw (1.4,1.25) node[anchor=center, color = red]{$\Delta_1^{(j)}= \Delta_2^{(k)}$} ;
        \draw (2.3,-0.25) node[anchor=center, color = red]{$\Delta_1^{(i)} = \Delta_2^{(j)}$};
        \draw (-0.18,0) node[anchor=center, color = red]{$\Delta_1^{(i)} = \Delta_2^{(k)}$};
        \draw (0.866,-1.3) node[anchor=center, color = red]{$\Delta_2^{(j)}$};
        \draw (-0.866,-1.3) node[anchor=center, color = red]{$\Delta_2^{(k)}$};
        \filldraw[red] (-1.3,-0.25) circle (2pt) ;
        \filldraw[red] (1.3,-0.25) circle (2pt) ;
        \filldraw[red] (0.866,-1) circle (2pt) ;
        \filldraw[red] (-0.866,-1) circle (2pt) ;
        \filldraw[black] (0,2) circle (2pt) ;
        \filldraw[black] (-1.732, -1) circle (2pt) ;
        \filldraw[black] (1.732, -1) circle (2pt) ;
        \filldraw[red] (-0.433,1.25) circle (2pt) ;
        \filldraw[red] (0.433,1.25) circle (2pt) ;
        \filldraw[red] (-0.433,-0.25) circle (2pt) ;
        \draw (0,2.3) node[anchor=center]{$Y_1$};
        \draw (-1.832, -1.3) node[anchor=center]{$Y_3$};
        \draw (1.832, -1.3) node[anchor=center]{$Y_2$};
\end{tikzpicture}
    \end{center}
    \caption{Geometric and tropical picture at $t_i = t_j = t_k =0$ in $X[n]$.}
    \label{124}
\end{figure}

\emph{Three basis directions vanish.} When $t_i=t_j=t_k=0$, where $i<j<k$, and all other basis directions are non-zero, then in the locus $\pi^*((Y_1\cap Y_2)^\circ)$, we see exactly two expanded components, which are both of mixed type. Note that, more generally in any fibre of $X[n]$, all expanded components in the $\pi^*((Y_1\cap Y_2)^\circ)$ locus are of mixed type. This is because, in any fibre of $X[n]$, we have that $\Delta_1^{(l)} = \Delta_2^{(n+1-l)}$ in the $\pi^*((Y_1\cap Y_2)^\circ)$ locus for all $l$.

In the example given here, the two bubbles in the $\pi^*((Y_1\cap Y_2)^\circ)$ locus can be described as follows. The bubble which intersects $Y_1$ non-trivially is given by $\Delta_1^{(i)} = \ldots = \Delta_1^{(j-1)}$. By the above, each of these $\Delta_1$-components is equivalent to a $\Delta_2$-component in the $\pi^*((Y_1\cap Y_2)^\circ)$ locus, so this bubble is equivalently given by $\Delta_2^{(n+1-j)} = \ldots = \Delta_2^{(n+1-i)}$. The second bubble in this locus, which intersects $Y_2$ non-trivially, is given by
\[
\Delta_1^{(j)} = \ldots = \Delta_1^{(k-1)} = \Delta_2^{(n+2-k)} = \ldots = \Delta_2^{(n+1-j)}.
\]
 There is a single bubble expanded out in the $\pi^*(Y_1\cap Y_2\cap Y_3)$ locus. This is a $\PP^1\times\PP^1$, given by the meeting of the $\Delta_1^{(i)} = \ldots = \Delta_1^{(j-1)}$ and $\Delta_2^{(n+2-k)} = \ldots = \Delta_2^{(n+1-j)}$ components. Finally, in the $\pi^*((Y_1\cap Y_3)^\circ)$ locus we see exactly two bubbles given by the two distinct expanded $\Delta_1$-components and in the $\pi^*((Y_2\cap Y_3)^\circ)$ locus we see also two bubbles given by the two distinct expanded $\Delta_2$-components. This can be seen in Figure \ref{124}. The intersection of the two edges in the interior of the triangle in the tropical picture creates a new vertex, corresponding to the new bubble in the $\pi^*(Y_1\cap Y_2\cap Y_3)$ locus. The other modified special fibres in $X[n]$ can be described similarly.

\medskip
Now, we note that there is a natural inclusion
\begin{align}\label{standard embedding1}
    C[n] &\longhookrightarrow C[n+1] \\
    (t_1, \ldots, t_{n+1})
    &\longmapsto (t_1, \ldots, t_{n+1}, 1), \nonumber
\end{align}
which, in turn, induces a natural inclusion
\[
X[n] \longhookrightarrow X[n+1].
\]
Under these inclusions, we may consider the space $X[n]$ as a locus in a larger space $X[n+k]$ where all $t_i \neq 0$ for $i>n+1$.

\subsubsection*{The group action.}
We may define a group action on $X[n]$ very similarly to \cite{GHH}. Let $G\subset \SL(n+1)$ be the maximal diagonal torus. We have $\GG_m^n\cong G\subset\GG_m^{n+1}$, where we can view an element of $G$ as an $(n+1)$-tuple $(\sigma_1,\dots,\sigma_{n+1})$ such that $\prod_i \sigma_i = 1$. This acts naturally on $\AA^{n+1}$, which induces an action on $C[n]$. The isomorphism $\GG_m^n\cong G$ is given by
\[
(\tau_1, \ldots, \tau_{n})\longrightarrow (\tau_1, \tau_1^{-1}\tau_2, \ldots, \tau_{n-1}^{-1}\tau_{n}, \tau_n^{-1}).
\]
We shall use the notation $(\tau_1, \ldots, \tau_{n})$ to describe elements of $G$ throughout this work.

\begin{proposition}\label{group proposition}
There is a unique $G$-action on $X[n]$ such that $X[n]\to X\times_{\AA^1}\AA^{n+1}$ is equivariant with respect to the natural action of $G$ on $\AA^{n+1}$.

\medskip
This action is the restriction of the action on $(X\times_{\AA^1}\AA^{n+1})\times (\PP^1)^{2n}$, which is trivial on $X$, acts by
        \begin{align*}
            t_1 &\longmapsto \tau_1^{-1}t_1  \\
            t_k &\longmapsto \tau_k^{-1}\tau_{k-1} t_k \\
            t_{n+1} &\longmapsto \tau_{n} t_{n+1}
        \end{align*}
        on the basis directions, and acts by
        \begin{align*}
            (x_0^{(k)}:x_1^{(k)}) &\longmapsto (\tau_k x_0^{(k)}: x_1^{(k)})  \\
            (y_0^{(k)}:y_1^{(k)}) &\longmapsto (y_0^{(k)}: \tau_{n+1-k} y_1^{(k)}).
        \end{align*}
        on the $\Delta$-components.
\end{proposition}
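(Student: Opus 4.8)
The plan is to realise the $G$-action as the canonical lift, through the sequence of blow-ups $b$, of an action on $X\times_{\AA^1}\AA^{n+1}$, and then to match this canonical lift against the explicit formulas by a local computation with the Rees algebras of the blow-up centres.

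First I would fix the action downstairs. Since $G\subset\SL(n+1)$, every element $(\sigma_1,\dots,\sigma_{n+1})$ satisfies $\prod_i\sigma_i=1$, so the multiplication map $\AA^{n+1}\to\AA^1$ used to form the fibre product is $G$-invariant; hence the natural diagonal action of $G$ on $\AA^{n+1}$ (which, in the coordinates $(\tau_1,\dots,\tau_n)$ for $G$, is the one displayed in the proposition) is compatible with the structure maps to $\AA^1$, and, combined with the trivial action on $X$, yields a well-defined $G$-action on $X\times_{\AA^1}\AA^{n+1}$ with equivariant projection to $\AA^{n+1}$. Because each $t_i$ is scaled by a character of $G$, each locus $V(t_i)$ is scheme-theoretically $G$-invariant, and since $G$ acts trivially on $X$ (hence fixes $Y_1$ and $Y_2$), every centre $Y_j\times_{\AA^1}V(t_i)$ occurring in $b$ is $G$-invariant.

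Next I would lift the action one blow-up at a time. If $W$ carries a $G$-action and $Z\subset W$ is $G$-invariant, then the action morphism $a\colon G\times W\to W$ satisfies $a^{-1}(Z)=G\times Z$, so for the blow-down $q\colon\Bl_Z W\to W$ the composite $a\circ(\id_G\times q)\colon G\times\Bl_Z W\to W$ pulls $Z$ back to $G\times q^{-1}(Z)$, an effective Cartier divisor. By the universal property of the blow-up, together with flatness of $G$ over $k$ (so that $G\times\Bl_Z W=\Bl_{G\times Z}(G\times W)$ and likewise for $G\times G$), this composite factors uniquely through $q$, giving the unique lift $G\times\Bl_Z W\to\Bl_Z W$ of $a$; uniqueness of the factorisation also forces the group-action axioms to be inherited from $W$. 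Applying this inductively along $b=\beta^2_{(n,n)}\circ\dots\circ\beta^1_{(1,0)}$, using at each stage that the centre is the scheme-theoretic preimage of a $G$-invariant subscheme under an already-established $G$-equivariant morphism, produces a unique $G$-action on $X[n]$ making $X[n]\to X\times_{\AA^1}\AA^{n+1}$ equivariant. This proves both existence and uniqueness. It then remains to identify this canonical action with the stated formulas: working in the étale local model, where $X[n]\subset(X\times_{\AA^1}\AA^{n+1})\times(\PP^1)^{2n}$ is cut out by \eqref{BU1 eqns} and \eqref{BU2 eqns}, the first blow-up has centre the ideal $\langle x,t_1\rangle$, on whose generators $G$ acts by $x\mapsto x$ and $t_1\mapsto\tau_1^{-1}t_1$; requiring the presentation $A[x_0^{(1)},x_1^{(1)}]\to S_1$ of the Rees algebra to be $G$-equivariant forces $x_0^{(1)}\mapsto x_0^{(1)}$ and $x_1^{(1)}\mapsto\tau_1^{-1}x_1^{(1)}$, i.e.\ $(x_0^{(1)}:x_1^{(1)})\mapsto(\tau_1 x_0^{(1)}:x_1^{(1)})$, as claimed, and the remaining $\PP^1$-factors are handled identically, each acquiring the ratio of the two characters by which $G$ acts on the generators of the corresponding centre. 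One then checks that these formulas are consistent with the defining equations, i.e.\ that the two sides of each equation in \eqref{BU1 eqns} and \eqref{BU2 eqns} are scaled by a common character of $G$ — for instance both sides of $x_0^{(k)}y_0^{(n+1-k)}z=x_1^{(k)}y_1^{(n+1-k)}$ scale by $\tau_k$, and both sides of $x_1^{(k-1)}x_0^{(k)}t_k=x_0^{(k-1)}x_1^{(k)}$ scale by $\tau_{k-1}$ — so the resulting action on the ambient product restricts to $X[n]$, lifts the action on $X\times_{\AA^1}\AA^{n+1}$, and hence coincides with the canonical lift.

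The main obstacle is the inductive lifting step: one must run the universal-property-of-blow-up argument at the level of the group scheme $G$ rather than merely on $k$-points, which is where the flat-base-change compatibility of blow-ups enters and where the group-action axioms have to be extracted from uniqueness of the factorisation. The bookkeeping in the final identification is routine but delicate, since the later centres are pullbacks and the defining equations mix the $x$- and $y$-coordinates, so the character-by-character compatibility must be verified for each family of equations in \eqref{BU1 eqns} and \eqref{BU2 eqns}.
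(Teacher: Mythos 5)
Your argument is correct, but it is a genuinely different route from the paper's: the paper proves Proposition \ref{group proposition} by a one-line appeal to \cite{GHH} (with the remark following the proposition only sketching why the blow-up equations leave no freedom in the choice of action), whereas you reconstruct the statement from scratch. Your existence-and-uniqueness step — lifting the action one blow-up at a time via the universal property, using that each centre is the preimage of a $G$-invariant subscheme under an already-equivariant map, and extracting the group axioms from uniqueness of the factorisation — is sound, and your weight bookkeeping checks out: with the lift in which $x_0^{(k)}$ carries the character $\tau_k$, $y_1^{(k)}$ carries $\tau_{n+1-k}$, and $t_1,t_k,t_{n+1}$ carry $\tau_1^{-1},\tau_{k-1}\tau_k^{-1},\tau_n$, every equation in \eqref{BU1 eqns} and \eqref{BU2 eqns} is indeed semi-invariant (e.g.\ both sides of $x_1^{(k-1)}x_0^{(k)}t_k=x_0^{(k-1)}x_1^{(k)}$ scale by $\tau_{k-1}$ and both sides of $x_0^{(k)}y_0^{(n+1-k)}z=x_1^{(k)}y_1^{(n+1-k)}$ by $\tau_k$), so the ambient action preserves $X[n]$ and must coincide with the canonical lift by your uniqueness statement. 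What your approach buys is a self-contained proof that visibly extends beyond the $xy=t$ setting of \cite{GHH} to the triple-point model, where the later centres are pullbacks and the equations mix the $x$- and $y$-charts — exactly the point the citation leaves implicit; what the paper's route buys is brevity. Two small remarks: the linear action on the Rees generators is only pinned down up to an overall character twist, but the induced projective action is well defined, which is all the proposition asserts (you implicitly handle this by passing to projective coordinates); and uniqueness could alternatively be obtained more cheaply by noting that $b$ is an isomorphism over a dense open subset and $X[n]$ is separated and reduced, so any two lifts of the same action agree.
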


\begin{proof}
    This follows immediately from \cite{GHH}.
\end{proof}

Note that the group action on the $(y_0^{(k)}:y_1^{(k)})$ coordinates follows immediately from the fact that $\Delta_1^{(k)} = \Delta_2^{(n+1-k)}$ in the $\pi^*((Y_1\cap Y_2)^\circ)$ locus. Given the equations of the blow-ups above, there is no other possible choice of action such that the map $\pi:X[n] \to X\times_{\AA^1}\AA^{n+1}$ is $G$-equivariant (the equations must be invariant under group action). Note also that the natural inclusions
\[
X[n] \longhookrightarrow X[n+k],
\]
we described in the previous section are equivariant under the group action. 

\begin{lemma}\label{G-invariant section of base}
    We have the isomorphism
    \[
    H^0(C[n],\cO_{C[n]})^G \cong k[t],
    \]
    where $H^0(C[n],\cO_{C[n]})^G$ denotes the space of $G$-invariant sections of $H^0(C[n],\cO_{C[n]})$.
\end{lemma}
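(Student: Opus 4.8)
The plan is to compute the ring of $G$-invariant functions on $C[n] = \AA^{n+1}\times_{\AA^1}C$ directly using the explicit description of the group action. Recall that $C[n]$ is $\Spec$ of the ring $k[t_1,\ldots,t_{n+1},t]/(t - t_1\cdots t_{n+1})$, which is simply the polynomial ring $k[t_1,\ldots,t_{n+1}]$ with the distinguished element $t = t_1\cdots t_{n+1}$. So $H^0(C[n],\cO_{C[n]}) \cong k[t_1,\ldots,t_{n+1}]$, and I need to identify the subring fixed by $G$.

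First I would translate the action into multiplicative form. By Proposition~\ref{group proposition}, an element $(\tau_1,\ldots,\tau_n)\in G$ acts on the basis directions by $t_1\mapsto \tau_1^{-1}t_1$, $t_k\mapsto \tau_{k-1}\tau_k^{-1}t_k$ for $2\le k\le n$, and $t_{n+1}\mapsto \tau_n t_{n+1}$. The key observation is that a monomial $t_1^{a_1}\cdots t_{n+1}^{a_{n+1}}$ is scaled by $\tau_1^{a_2 - a_1}\tau_2^{a_3-a_2}\cdots\tau_n^{a_{n+1}-a_n}$, so it is $G$-invariant precisely when $a_1 = a_2 = \cdots = a_{n+1}$, i.e.\ when the monomial is a power of $t_1\cdots t_{n+1} = t$. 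Since $G$ acts diagonally on the monomial basis, a polynomial is invariant if and only if each of its monomials is, hence $H^0(C[n],\cO_{C[n]})^G = k[t_1\cdots t_{n+1}] = k[t]$. (One should note $t$ is not nilpotent and the $t_i$ are algebraically independent, so $k[t]$ really is a polynomial ring in one variable, matching the fact that $C\cong\AA^1$.)

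The only mild subtlety — and the step I would be most careful about — is justifying that invariance can be checked monomial-by-monomial: this is because $G$ is a torus acting linearly and diagonally on the vector space $H^0(C[n],\cO_{C[n]})$ with the monomials as weight vectors, so the invariant subspace is spanned by the weight-zero monomials. Everything else is a direct character computation, and the isomorphism with $k[t]$ is then immediate and compatible with the structure map $C[n]\to C$. So the proof amounts to the weight calculation above, together with the remark that the composite isomorphism is the one induced by $C[n]\to C$.
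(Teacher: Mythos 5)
Your proof is correct and is essentially the paper's argument: the paper simply declares the lemma immediate from the description of the $G$-action in Proposition \ref{group proposition}, and your monomial weight computation (a monomial $t_1^{a_1}\cdots t_{n+1}^{a_{n+1}}$ has weight $(a_2-a_1,\ldots,a_{n+1}-a_n)$, so invariance forces all $a_i$ equal) is exactly the verification being waved at. The only extra content you add is the explicit justification that invariance can be checked monomial-by-monomial via the torus weight decomposition, which is a reasonable and correct way to make the "immediate" step precise.
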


\begin{proof}
    This is immediate from the above description of the group action.
\end{proof}

\begin{remark}
    We abuse notation slightly by referring to the group acting on $X[n]$ by $G$, instead of $G[n]$. It should always be clear from the context what group $G$ is meant.
\end{remark}

\subsection{Embedding into product of projective bundles}\label{prod proj buns}

In this section, we show how $X[n]$ can be embedded into a fibre product of projective bundles, which locally corresponds to the embedding in $(X\times_{\AA^1}\AA^{n+1})\times (\PP^1)^{2n}$. The $G$-action on $X[n]$ may be expressed as a restriction of a global action on this product of projective bundles. We will then be able to define a $G$-linearised ample line bundle $\cL$ on $X[n]$ by taking the tautological bundle of this fibre product of projective bundles. From this line bundle we will then construct a second line bundle $\cM$ on the relative Hilbert scheme of $m$ points $H^m_{[n]} \coloneqq \Hilb^m(X[n]/C[n])$ with an induced $G$-linearisation.

\medskip
Let $\pr_1$ and $\pr_2$ be the projections of $X \times_{\AA^1} \AA^{n+1}$ to $X$ and $\AA^{n+1}$ respectively. Similarly to \cite{GHH}, we define vector bundles
\begin{align*}
    \cF_{1}^{(k)} &= \pr_1^* \cO_X(-Y_{1}) \oplus \pr_2^* \cO_{\AA^{n+1}}(-V(t_k)) \\
    \cF_{2}^{(k)} &= \pr_1^* \cO_X(-Y_{2}) \oplus \pr_2^* \cO_{\AA^{n+1}}(-V(t_{n+2-k}))
\end{align*}
on $X\times_{\AA^1}\AA^{n+1}$.

\begin{lemma}
    There is an embedding
    \[
    X[n] \longhookrightarrow \prod_{i,j}\PP( \cF_i^{(j)}),
    \]
    where the product of projective bundles $\prod_{i,j}\PP( \cF_i^{(j)})$ is constructed as a fibre product over $X\times_{\AA^1}\AA^{n+1}$.
\end{lemma}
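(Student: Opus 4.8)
The plan is to construct the embedding by exhibiting $X[n]$ as a closed subscheme of the fibre product of projective bundles via the universal property of $\Proj$, building it up one blow-up at a time in parallel with the inductive construction in Section \ref{BUs}. First I would recall the standard fact that for a coherent sheaf $\cF$ on a scheme $S$, the projective bundle $\PP(\cF) \to S$ represents the functor of line bundle quotients of $\cF$, and that blowing up a Cartier-locally-principal ideal generated by two sections embeds the blow-up into the $\PP^1$-bundle associated to the rank-2 bundle whose two summands are the line bundles those sections trivialise. Concretely, the first blow-up has ideal $\cI_1$ generated locally by $x$ and $t_1$; globally $x$ is a section of $\pr_1^*\cO_X(Y_1)$ (vanishing on $Y_1$) and $t_1$ is a section of $\pr_2^*\cO_{\AA^{n+1}}(V(t_k))$, so the Rees algebra $\bigoplus_n \cI_1^n$ is a quotient of $\Sym(\cF_1^{(1)\vee})$ — equivalently $\Proj$ of it embeds into $\PP(\cF_1^{(1)})$. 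This reproduces exactly the projective coordinates $(x_0^{(1)}:x_1^{(1)})$ and the relation $x_0^{(1)}t_1 = x x_1^{(1)}$ appearing in \eqref{BU1 eqns}.

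Next I would iterate. At each successive stage $\beta^i_{(j,k)}$ we blow up the (Weil, but locally principal on the already-blown-up space) divisor $b^*(Y_i \times_{\AA^1} V(t_?))$, whose ideal is again generated by two sections of the pullbacks of the relevant line bundles; the strict transform of $Y_i$ still meets each local chart in a principal divisor because the blow-ups only contract codimension-$\geq 2$ loci (as noted just before the first blow-up in Section \ref{BUs}). Pulling back the bundles $\cF_i^{(j)}$ to $X\times_{\AA^1}\AA^{n+1}$ and forming the fibre product $\prod_{i,j}\PP(\cF_i^{(j)})$ over $X\times_{\AA^1}\AA^{n+1}$, each blow-up contributes one factor, and the composite $X[n] \hookrightarrow X\times_{\AA^1}\AA^{n+1}$ followed by the section into each $\PP(\cF_i^{(j)})$ assembles into a single morphism $X[n] \to \prod_{i,j}\PP(\cF_i^{(j)})$. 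One then checks this morphism is a closed immersion: it is proper (source is proper over the base, target is separated over the base) and injective on points and on tangent spaces, which can be read off the local equations \eqref{BU1 eqns}, \eqref{BU2 eqns}, or more cleanly one observes that étale-locally it is precisely the embedding into $(X\times_{\AA^1}\AA^{n+1})\times(\PP^1)^{2n}$ already established in the preceding proposition, since $\PP(\cF_i^{(j)})$ is a $\PP^1$-bundle and the $\cF_i^{(j)}$ are locally trivial.

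The main obstacle I anticipate is the globalisation: verifying that the locally-defined blow-up coordinates genuinely patch into sections of the asserted line bundles $\pr_1^*\cO_X(-Y_i)$ and $\pr_2^*\cO_{\AA^{n+1}}(-V(t_?))$ rather than merely some line bundles that agree locally — this is exactly the subtlety flagged in the Remark after the projectivity proposition about \cite{GHH}. The key point to get right is that $Y_1, Y_2$ are honest Cartier divisors on the smooth total space $X$ (even though the expanded-out $\Delta$-components are only Weil divisors upstairs), so their pullbacks are line bundles and the two generators of each blow-up ideal are bona fide global sections of a globally-defined rank-2 bundle; after that the Rees-algebra/$\Proj$ argument is formal. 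I would handle this by working on $X\times_{\AA^1}\AA^{n+1}$ throughout — where everything in sight is Cartier — and only invoking the local equations as a consistency check, so that the globalisation is built in from the start rather than being patched in afterwards. The remaining verifications (that the fibre product is taken over the correct base, that the $G$-action on $\prod_{i,j}\PP(\cF_i^{(j)})$ restricts to the one of Proposition \ref{group proposition}) are routine and follow the template of \cite{GHH}.
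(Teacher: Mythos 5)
Your proposal follows essentially the same route as the paper's proof: at each stage the blow-up ideal is generated by the two sections $b^*x$ (resp.\ $b^*y$) and $t_k$, giving a surjection $b^*\cF_i^{(k)} \to \cI_i^{(k)}$, hence an embedding of each partially blown-up space into $\PP(b^*\cF_i^{(k)})$, and these maps are assembled into the fibre product $\prod_{i,j}\PP(\cF_i^{(j)})$ over $X\times_{\AA^1}\AA^{n+1}$ with closedness coming from properness, exactly as in the paper. Two minor slips worth fixing but not affecting the argument: with your stated quotient convention for $\PP(\cF)$ the Rees algebra is a quotient of $\Sym(\cF_1^{(1)})$, not of $\Sym(\cF_1^{(1)\vee})$, and the later centres $b^*(Y_i\times_{\AA^1}V(t_k))$ are not locally principal on the already-blown-up space (if they were, those blow-ups would be isomorphisms); what matters is only that their ideals are generated by the two pulled-back sections.
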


\begin{proof}
Let $\cI_1^{(k)},\cI_2^{(k)}$ be the ideal sheaves corresponding to each blow-up we perform; for example $\cI_1^{(1)}$ is the ideal sheaf of $Y_{1} \times_{\AA^1} V(t_1)$ on $X\times_{\AA^1}\AA^{n+1}$. Then $\cI_2^{(1)}$ is the ideal sheaf of $b_{(1,0)}^*(Y_{2} \times_{\AA^1} V(t_{n+1}))$ on $X_{(1,0)}$, and so on for $\cI_j^{(k)}$.

As we will explain below, we then have, for each of the vector bundles $\cF_{1}^{(k_1)}$ and $\cF_{2}^{(k_2)}$, the embeddings
\begin{align*}
    X_{(k_1,k_2)} &\longhookrightarrow \PP(b_{(k_1-1,k_2)}^* \cF_1^{(k_1)}), \\
    X_{(k_1,k_2)} &\longhookrightarrow \PP(b_{(k_1,k_2-1)}^* \cF_2^{(k_2)}),
\end{align*}
where $b_{(0,0)}$ is understood to be just the identity map on $X_{(0,0)} = X\times_{\AA^1}\AA^{n+1}$. Indeed, the scheme $X_{(k_1,k_2)}$ embeds into the projectivisations of the ideals of these blow-ups $\PP(\cI_1^{(k_1)})$ and $ \PP(\cI_2^{(k_2)})$. For a reference on projectivisations of ideals see \cite{EH}. There is a surjection 
\[
b_{(k_1-1,k_2)}^*\cF_1^{(k_1)} \longrightarrow \cI_1^{(k_1)} \text{ given by } \begin{pmatrix}
b_{(k_1-1,k_2)}^* x\\
t_{k_1}
\end{pmatrix},
\]
where $x$ is a defining equation of the locus to be blown up projected forward to $X$, i.e.\ it is the defining equation of $Y_{1}$. Similarly, there is a surjection
\begin{align*}
    b_{(k_1,k_2-1)}^*\cF_2^{(k_2)} &\longrightarrow \cI_2^{(k_2)}.
\end{align*}
From this, we deduce that there are embeddings 
\begin{align*}
    \PP(\cI_1^{(k_1)}) &\longhookrightarrow \PP(b_{(k_1-1,k_2)}^* \cF_1^{(k_1)}), \\
    \PP(\cI_2^{(k_2)}) &\longhookrightarrow \PP(b_{(k_1,k_2-1)}^* \cF_2^{(k_2)}).
\end{align*}
Hence we have embeddings 
\begin{figure}[H]
    \centering
    \begin{tikzcd}
        &X_{(k_1,k_2)} \arrow[ld, hook]  \arrow[rd, hook]\\
        \PP(b_{(k_1-1,k_2)}^* \cF_1^{(k_1)})  &&\PP(b_{(k_1,k_2-1)}^* \cF_2^{(k_2)}).
    \end{tikzcd}
    \label{embeddings}
\end{figure}
Now, similarly to \cite{GHH}, we can embed $X[n] = X_{(n,n)}$ into $\prod_{i,j}\PP( \cF_i^{(j)})$, which is to be understood as the fibre product over $X\times_{\AA^1} \AA^{n+1}$. This can be seen by iteration on $i,j$ in the following way. The simplest case is $X_{(1,0)} \hookrightarrow \PP(b_{(0,0)}^* \cF_1^{(1)}) = \PP( \cF_1^{(1)})$, which is obvious. Then for $X_{(1,1)}$, we have the following commutative diagram
\begin{figure}[H]
    \centering
    \begin{tikzcd}
X_{(1,1)}\subset b_{(1,0)}^* \PP(\cF_2^{(1)}) \arrow[r] \arrow[d] &X_{(1,0)} \arrow[d, "b_{(1,0)}"]\subset \PP(\cF_1^{(1)}) \\
  \PP(\cF_2^{(1)}) \arrow[r] &X\times_{\AA^1} \AA^{n+1}
\end{tikzcd}
    \label{fig:my_label}
\end{figure}
\noindent (recall $b_{(1,0)}^* \PP(\cF_2^{(1)})$ is a vector bundle over $X_{(1,0)}$ and $\PP(\cF_2^{(1)})$ is a vector bundle over $X\times_{\AA^1} \AA^{n+1}$, giving us the horizontal maps). By the universal property of fibre products, there is a unique map $X_{(1,1)} \to \PP(\cF_1^{(1)})\times \PP(\cF_2^{(1)})$. But by universal property of the pullback there is also a unique map $\PP(\cF_1^{(1)})\times \PP(\cF_2^{(1)}) \to b_{(1,0)}^* \PP(\cF_2^{(1)})$, hence the embedding $X_{(1,1)} \hookrightarrow b_{(1,0)}^*\PP(\cF_2^{(1)})$ factors through $\PP(\cF_1^{(1)})\times \PP(\cF_2^{(1)})$. Since the composition of the two maps is injective, the first map, i.e.\ $X_{(1,1)} \to \PP(\cF_1^{(1)})\times \PP(\cF_2^{(1)})$, must be injective and the image in $\PP(\cF_1^{(1)})\times \PP(\cF_2^{(1)})$ is closed by properness. We can then iterate this argument until we obtain the embedding $X_{(n,n)}\hookrightarrow \prod_{i,j}\PP( \cF_i^{(j)})$.
\end{proof}

\medskip
The $G$-action is a restriction of the torus action on $\prod_{i,j}\PP( \cF_i^{(j)})$, described étale locally in Proposition \ref{group proposition}.

\subsubsection*{Linearisations.}

The following lemma gives a method to construct all the linearised line bundles we will need to vary the GIT stability condition.

\begin{lemma}\label{line bun}
    There exists a $G$-linearised ample line bundle $\cL$ on $X[n]$ such that locally the lifts to this line bundle of the $G$-action on each $\PP^1$ corresponding to a $\Delta_1^{(k)}$ and on each $\PP^1$ corresponding to a $\Delta_2^{(n+1-k)}$ are given by
    \begin{align}
        \label{1lift1}
        (x_0^{(k)};x_1^{(k)}) &\longmapsto (\tau_k^{a_k} x_0^{(k)} ; \tau_k^{-b_k} x_1^{(k)}) \\
        \label{1lift2}
        (y_0^{(n+1-k)};y_1^{(n+1-k)}) &\longmapsto (\tau_k^{-c_k} y_0^{(n+1-k)} ; \tau_k^{d_k} y_1^{(n+1-k)}) 
    \end{align}
    for any choice of positive integers $a_k,b_k,c_k,d_k$.
\end{lemma}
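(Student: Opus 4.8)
The plan is to build $\cL$ explicitly as a tensor product of pullbacks of the tautological (Serre) line bundles from the factors of the fibre product $\prod_{i,j}\PP(\cF_i^{(j)})$, suitably twisted by pullbacks of bundles from $X \times_{\AA^1} \AA^{n+1}$, and then to compute the induced $G$-linearisation on each $\PP^1$-factor. Recall from the previous lemma that $X[n] \hookrightarrow \prod_{i,j}\PP(\cF_i^{(j)})$, where each $\PP(\cF_i^{(j)})$ carries a tautological quotient line bundle $\cO_{\PP(\cF_i^{(j)})}(1)$. First I would set
\[
\cL \coloneqq \bigotimes_{k=1}^{n} \Big( \rho_{1,k}^*\cO_{\PP(\cF_1^{(k)})}(e_k) \otimes \rho_{2,k}^*\cO_{\PP(\cF_2^{(k)})}(f_k) \Big) \otimes (\text{pullback of an ample bundle from } X\times_{\AA^1}\AA^{n+1}),
\]
where $\rho_{i,k}$ denotes the projection onto the relevant factor and $e_k,f_k$ are positive integers to be chosen. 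Ampleness then follows because a tautological bundle $\cO_{\PP(\cF)}(1)$ is relatively ample over the base and a large twist by an ample bundle from the base makes the total tensor product ample on the fibre product (and hence on the closed subscheme $X[n]$); this is the standard argument, identical to the one in \cite{GHH}.

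Next I would pin down the $G$-linearisation. The bundle $\cF_1^{(k)} = \pr_1^*\cO_X(-Y_1) \oplus \pr_2^*\cO_{\AA^{n+1}}(-V(t_k))$ splits as a direct sum, and the surjection $b^*\cF_1^{(k)} \to \cI_1^{(k)}$ given by $(x, t_k)^{\transpose}$ realises $(x_0^{(k)}:x_1^{(k)})$ as the homogeneous coordinates dual to this splitting. From the formulas in Proposition \ref{group proposition}, $G$ acts on $x_0^{(k)}$ by $\tau_k$ (up to contributions that can be absorbed — see below) and on $x_1^{(k)}$ via its action on $t_k$, and on the summands $\pr_1^*\cO_X(-Y_1)$, $\pr_2^*\cO_{\AA^{n+1}}(-V(t_k))$ the natural linearisations are canonically determined. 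The point is that the $G$-action on the rank-one summands of $\cF_i^{(j)}$ is \emph{already} linearised by the equivariant structure coming from the action on $X\times_{\AA^1}\AA^{n+1}$, but this linearisation is only unique up to a character of $G$ on each summand; twisting $\cO_{\PP(\cF_1^{(k)})}(1)$ by the pullback of the trivial bundle with a chosen character of $G$ shifts the weights on $x_0^{(k)}$ and $x_1^{(k)}$ by equal amounts, while taking the $e_k$-th power scales them. Choosing these character-twists and the exponents $e_k, f_k$ appropriately produces exactly the weights $(a_k, -b_k)$ on $(x_0^{(k)}, x_1^{(k)})$ and $(-c_k, d_k)$ on $(y_0^{(n+1-k)}, y_1^{(n+1-k)})$ for arbitrary positive integers $a_k,b_k,c_k,d_k$: given target weights, solve for the twisting character (which fixes $a_k - (-b_k) = a_k+b_k$ type differences via the power $e_k$, and the overall shift via the character). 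The relation $\Delta_1^{(k)} = \Delta_2^{(n+1-k)}$ in the $\pi^*((Y_1\cap Y_2)^\circ)$ locus guarantees the two families of coordinates are compatibly acted on, so no contradiction arises between the $x$- and $y$-prescriptions.

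I would organise the write-up as: (1) define $\cL$ as above with undetermined positive exponents and character twists; (2) prove ampleness by the standard relative-ample-plus-base-twist argument; (3) prove $G$-linearisability by noting each $\cF_i^{(j)}$ is $G$-equivariant and direct-sum-decomposed, hence its projectivisation's tautological bundle is canonically $G$-linearised up to a character, and tensor products and pullbacks of linearised bundles are linearised; (4) carry out the weight bookkeeping on a single $\PP^1$-factor, reading off the action on $(x_0^{(k)}:x_1^{(k)})$ from Proposition \ref{group proposition} and exhibiting the twist/power that realises the prescribed weights $(a_k,-b_k)$, then do the symmetric computation for $(y_0^{(n+1-k)}:y_1^{(n+1-k)})$; (5) remark that $X[n]$ being a $G$-stable closed subscheme, the restriction of $\cL$ inherits the linearisation. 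The main obstacle I expect is step (4): being careful that the character twists on the $2n$ factors can be chosen \emph{independently} to hit arbitrary $(a_k,b_k,c_k,d_k)$ without the relations among the blow-up equations \eqref{BU1 eqns}–\eqref{BU2 eqron} forcing unwanted linear dependencies among the weights — one must check that the weight contributions coming from the summands $\cO_{\AA^{n+1}}(-V(t_k))$ (whose $G$-weights are fixed by the action on the $t_k$) enter only additively and can be cancelled by the free choice of twisting character on each tautological factor.
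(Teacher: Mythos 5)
Your proposal is correct and follows essentially the same route as the paper: take the canonical $G$-linearisations on the split bundles $\cF_i^{(k)}$, note that the induced canonical lift on each $\cO_{\PP(\cF_i^{(k)})}(1)$ has local weights $(0,-1)$ on $(x_0^{(k)},x_1^{(k)})$ and $(0,1)$ on $(y_0^{(k)},y_1^{(k)})$, then pass to the power $a_k+b_k$ (resp.\ $c_k+d_k$) and adjust by a character to realise the weights $(a_k,-b_k)$ and $(-c_k,d_k)$, finally tensoring the pullbacks over the fibre product $\prod_{i,k}\PP(\cF_i^{(k)})$ and restricting to $X[n]$. Your treatment is in fact slightly more explicit than the paper's (which leaves the character twist and the ampleness bookkeeping implicit, citing the analogous Lemma 1.18 of \cite{GHH}), but it is not a genuinely different argument.
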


\begin{proof}
    Similarly to the proof of Lemma 1.18 in \cite{GHH}, we see that each locally free sheaf $\cF_i^{(k)}$ on $X\times_{\AA^1} \AA^{n+1}$ has a canonical $G$-linearisation. There is an induced $G$-action on the projective product $\prod_{i,k}\PP( \cF_i^{(k)})$, which is equivariant under the embedding
    \[
    X[n] \longhookrightarrow \prod_{i,k}\PP( \cF_i^{(k)}).
    \]
    The $G$-action on each $\PP( \cF_i^{(k)})$ lifts to a $G$-action on the corresponding vector bundle, which gives us a canonical linearisation of the line bundle $\cO_{\PP( \cF_i^{(k)})}(1)$. Locally, the actions on $\cO_{\PP( \cF_1^{(k)})}(1)$ and $\cO_{\PP( \cF_2^{(k)})}(1)$ are given respectively by
    \begin{equation}
        (x_0^{(k)} ; \tau_k^{-1} x_1^{(k)}) \quad \text{and} \quad
        ( y_0^{(k)} ; \tau_{n+1-k} y_1^{(k)}).\nonumber
    \end{equation}
    We therefore may define the lifts \eqref{1lift1} and \eqref{1lift2} on the line bundles $\cO_{\PP( \cF_1^{(k)})}(a_k+b_k)$ and $\cO_{\PP( \cF_{2}^{(n+1-k)})}(c_k+d_k)$ respectively. We then pull back each $\cO_{\PP( \cF_1^{(k)})}(a_k+b_k)$ and $\cO_{\PP( \cF_{2}^{(k)})}(c_{n+1-k}+d_{n+1-k})$ to $\prod_{i,k}\PP( \cF_i^{(k)})$ and form their tensor product to obtain a $G$-linearised line bundle, which we denote by $\cL$.
\end{proof}

Each such line bundle $\cL$ which can be constructed in this way will induce a $G$-linearised line bundle $\cM$ on $H^m_{[n]}$. This, in turn, will yield a GIT stability condition on $H^m_{[n]}$.

\section{GIT stability}\label{GIT stability}

In this section, we set up some results analogous to those of \cite{GHH} to describe various GIT stability conditions on the scheme $X[n]$ with respect to the possible choices of $G$-linearised line bundles described in the previous section. In particular, we show that these stability conditions do not depend on the scheme structure of the length $m$ zero-dimensional subschemes, but instead can be reduced to combinatorial criteria on configurations of $n$ points.

\subsection{Hilbert-Mumford criterion}

In this section, we shall recall the definition of Hilbert-Mumford invariants and give a numerical criterion for stability and semi-stability in terms of these invariants.

\medskip
Let $H$ be a reductive group acting on a scheme $S$, which is proper over an algebraically closed field $k$. Let $L$ be a $H$-linearised ample line bundle. Then a \emph{1-parameter subgroup} of $H$ (denoted 1-PS for convenience) is defined to be a homomorphism
\[
\lambda\colon \GG_m \to H.
\]
Now let $P$ be any point in $S$. For $\tau\in \GG_m$, we denote by $P_0$ the limit of $\tau P$ as $\tau$ tends towards 0 if such a limit exists. Then let $\mu^L(\lambda,P)$ be the negative of the weight of the $\GG_m$-action on the fibre $L(P_0)$. We call $\mu^L(\lambda,P)$ a \emph{Hilbert-Mumford invariant}.

\medskip
In our case we will want to think of $H$ as being our group $G$, of $S$ as being the relative Hilbert scheme of points $H^m_{[n]}$ and of $L$ as being the line bundle $\cM$ on $H^m_{[n]}$, which we define in the next section. A 1-parameter subgroup of $G$ will be given by a map
\[
\lambda\colon \GG_m \to G, \quad \tau \mapsto ( \tau^{s_1}, \ldots, \tau^{s_{n}}),
\]
where $(s_1,\ldots,s_{n})\in \ZZ^{n}$. The following result will allow us to use these invariants to determine stability and semi-stability in our GIT constructions. It is a relative version of the Hilbert-Mumford criterion (see Mumford, Fogarty and Kirwan \cite{MFK}) proven by Gulbrandsen, Halle and Hulek in \cite{HM}.

\begin{theorem}
Let $k$ be an algebraically closed field and $f\colon S \to B$ a projective morphism of $k$-schemes. Assume $B=\Spec A$ is noetherian and $B$ is of finite type over $k$. Let $H$ be an affine, linearly reductive group over $k$ acting on $S$ and $B$ such that $f$ is equivariant and let $L$ be an ample $H$-linearised line bundle on $S$. Suppose $P\in S$ is a closed point. Then $P$ is stable (or semistable) if and only if $\mu^L(\lambda, P)> 0 $ (or $ \geq 0$) for every non-trivial 1-PS $\lambda\colon \GG_m \to H$.

\end{theorem}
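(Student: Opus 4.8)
The plan is to deduce the statement from the absolute Hilbert--Mumford criterion of \cite{MFK} by two successive $H$-equivariant embeddings that turn the relative problem into a question about a linear action of $H$ on an affine space.

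First I would linearise the whole situation. After replacing $L$ by a sufficiently divisible positive power, which multiplies every $\mu^L(\lambda,P)$ by a fixed positive integer and so changes none of the sign conditions, I may assume that $L$ is very ample relative to $f$ and that $f_*L$ generates the relative section algebra. Since $B=\Spec A$ is affine and noetherian, $f_*L$ is a finitely generated $A$-module carrying a compatible algebraic action of $H$; because $H$ is linearly reductive it is an $H$-equivariant quotient of $A\otimes_k V$ for a finite-dimensional $H$-representation $V$, which produces an $H$-equivariant closed immersion $S\hookrightarrow\PP^M\times B$ over $B$, with $\PP^M=\PP(V)$, under which $L$ together with its linearisation is the pullback of $\cO_{\PP^M}(1)$ with the linearisation induced by $V$. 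By the same reasoning $A$ is an $H$-equivariant quotient of a polynomial algebra, so $B$ embeds $H$-equivariantly as a closed subscheme of an affine space $\AA^N$ with a linear $H$-action. Composing these gives an $H$-equivariant locally closed immersion $S\hookrightarrow\PP^M\times\AA^N$ preserving all the data, in particular the invariants $\mu^L(\lambda,P)$.

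Next I would pass to the affine cone over the $\PP^M$-factor: let $\widehat S\subseteq\AA^{M+1}\times\AA^N$ be the cone over $S$, an affine $H$-scheme of finite type over $k$ on which $H$ acts linearly, and pick a lift $\widehat P\in\widehat S$ of $P$ with nonzero $\PP^M$-coordinates. By the definition of relative (semi)stability with respect to the ample linearised bundle $L$, the point $P$ is semistable if and only if $\overline{H\cdot\widehat P}$ is disjoint from $\{0\}\times\AA^N$, and stable if and only if in addition $H\cdot\widehat P$ is closed in the complement of $\{0\}\times\AA^N$ and $P$ has finite stabiliser in $H$. Now the classical argument applies: if $\overline{H\cdot\widehat P}$ meets $\{0\}\times\AA^N$, then that intersection is a non-empty closed $H$-invariant subscheme, hence contains a closed $H$-orbit $O$, and the Hilbert--Mumford lemma of \cite{MFK} (which uses only linear reductivity of $H$) furnishes a non-trivial $1$-PS $\lambda$ with $\lim_{\tau\to0}\lambda(\tau)\widehat P\in O\subseteq\{0\}\times\AA^N$; unwinding the definition of $\mu^L$ this says precisely that $\mu^L(\lambda,P)\le 0$. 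The same reasoning against the stability conditions produces a $\lambda$ with $\mu^L(\lambda,P)\le 0$ whenever $P$ is semistable but not stable, while the reverse implications are immediate, since a $1$-PS limiting into $\{0\}\times\AA^N$ directly contradicts (semi)stability. Using properness of $f$ one also checks that $\lim_{\tau\to0}\lambda(\tau)P$ exists in $S$ exactly when $\lim_{\tau\to0}\lambda(\tau)f(P)$ exists in $B$; the remaining $1$-PS impose no condition and are assigned $\mu^L=+\infty$, consistently with the definition above.

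The step I expect to be the main obstacle is handling the non-properness of the base $B$. Concretely, one must verify that the chosen definition of relative (semi)stability --- invariant sections of powers of $L$ together with the affineness and closed-orbit requirements --- is faithfully translated into the orbit-closure condition for $\widehat P$ in $\AA^{M+1}\times\AA^N$, and that the absolute Hilbert--Mumford criterion, usually stated for a projective variety over a field, carries over to the possibly non-reduced locally closed $H$-subscheme $S\subseteq\PP^M\times\AA^N$ constructed above. Both points should go through, since the only genuine input needed is that the closure of an $H$-orbit in a finite-type $H$-scheme over $k$, with $H$ linearly reductive, contains a closed orbit that is reached along a $1$-parameter subgroup.
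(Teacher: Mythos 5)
You should first be aware that the paper contains no proof of this statement: it is quoted as the relative Hilbert--Mumford criterion of Gulbrandsen, Halle and Hulek \cite{HM}, so there is no in-paper argument to compare with, and your proposal is effectively an attempt to reprove the cited theorem. As a skeleton, your reduction is reasonable: the equivariant embeddings $S\hookrightarrow \PP^M\times B$ and $B\hookrightarrow \AA^N$, passage to the affine cone, and translation of relative semistability into the condition that $\overline{H\cdot\widehat P}$ avoids the zero section $\{0\}\times\AA^N$ can all be carried out (linear reductivity separates the two disjoint closed invariant sets by an invariant function, whose positive-degree homogeneous components --- the cone $\GG_m$ commutes with $H$ --- give invariant sections of powers of $L$, and conversely). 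For the destabilising direction you correctly reach for the closed-orbit form of the one-parameter subgroup theorem (Kempf), and you need its strong form: the limit lands in the \emph{unique} closed orbit of $\overline{H\cdot\widehat P}$, which by uniqueness coincides with the closed orbit you located inside $\{0\}\times\AA^N$; note that existence of the limit of $P$ in $S$ is then automatic, since the $\AA^N$-coordinate converges as part of Kempf's limit and the $\PP^M$-coordinate converges by properness --- this is precisely where non-properness of $B$ could have hurt, and it deserves to be said explicitly rather than left implicit.

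The genuine gap is the stability half, which you dispatch as ``the same reasoning''. If $P$ is semistable but not stable you must produce a non-trivial $\lambda$ with $\mu^L(\lambda,P)\le 0$, and the two failure modes need different arguments. If $H\cdot\widehat P$ is not closed off the zero section, Kempf gives $\lambda$ whose limit lies in the unique closed orbit, which now \emph{avoids} $\{0\}\times\AA^N$, and one must argue that the minimal $\lambda$-weight of $\widehat P$ is exactly zero, so $\mu^L(\lambda,P)=0$. If instead the orbit is closed but the stabiliser is infinite, no orbit-closure argument applies, and a positive-dimensional stabiliser need not a priori contain any $\GG_m$; the standard repair is Matsushima's criterion (a closed orbit in the affine cone is affine, so $\Stab(\widehat P)$ is reductive, hence contains a non-trivial torus if positive-dimensional), which yields a non-trivial $\lambda$ fixing $\widehat P$ and forces $\mu^L(\lambda,P)=0$; if only $\Stab(P)$ is positive-dimensional, its identity component maps with finite kernel to the $\GG_m$ acting on the fibre $L(P)$, and then $\mu^L(\lambda,P)=-\mu^L(\lambda^{-1},P)$ does the job. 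None of this is a formal consequence of the semistable case. Likewise, the dictionary you assume between relative \emph{stability} and ``orbit closed off the zero section plus finite stabiliser'' over an affine, non-proper base is exactly the sort of statement \cite{HM} exists to certify, so in a self-contained proof it must be established, not asserted.
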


\subsection{Action of 1-parameter subgroup}\label{1-PS}

\subsubsection*{Existence of limits under action of a 1-PS.}
Let $P$ be any point in $X[n]$ and let $p_n \colon X[n] \to C[n]$ be the projection to the base. As stated in \cite{GHH}, the limit $P_0$ of $P$ under a 1-PS as defined above exists if and only if its projection onto the base, $p_n(P)\in C[n]$, has a limit. The $G$-action on the base is a pullback of the action on $\AA^{n+1}$ and the corresponding action of a 1-PS is
\begin{align*}
    t_1 &\longmapsto \tau^{-s_1}t_1, \\
    t_k &\longmapsto \tau^{s_{k-1}-s_k}t_k, \quad \textrm{for } 1<k\leq n, \\
    t_{n+1} &\longmapsto \tau^{s_n}t_{n+1}.
\end{align*}
The projection $p_n(P)$ of the point $P$ to the base has a limit as $\tau$ tends to zero if and only if each power of $\tau$ in the action is nonnegative on the nonzero basis directions $t_i$, i.e.\ if and only if
\begin{align}\label{boundedness}
    &0\geq s_1\geq \ldots \geq s_{n+1} \geq 0,
\end{align}
where each inequality from left to right must hold if $t_1, \ldots, t_{n+1}$ is nonzero respectively. Thus we obtain boundedness conditions on the weights $s_i$ dependent on where $P$ lies over the base. In particular, when $t_i\neq 0$ for all $i$, this implies that $s_i=0$ for all $i$, so the 1-PS are trivial and all points are trivially semistable.

\subsubsection*{Lifts of 1-PS action to the line bundle.}
Let $\cL$ be a line bundle as described in Lemma \ref{line bun}. Assume that locally the lifts to $\cL$ of the $G$-action on each $\PP^1$ corresponding to a $\Delta_1^{(k)}$ and on each $\PP^1$ corresponding to a $\Delta_2^{(n+1-k)}$ are given by
    \begin{align*}
        (x_0^{(k)};x_1^{(k)}) &\longmapsto (\tau_k^{a_k} x_0^{(k)} ; \tau_k^{-b_k} x_1^{(k)}) \\
        (y_0^{(n+1-k)};y_1^{(n+1-k)}) &\longmapsto (\tau_k^{-c_k} y_0^{(n+1-k)} ; \tau_k^{d_k} y_1^{(n+1-k)}) 
    \end{align*}
for some choice of positive integers $a_k,b_k,c_k,d_k$. Then the corresponding lifts of the 1-PS action to $\cL$ are given by
\begin{align*}
    (x_0^{(k)};x_1^{(k)}) &\longmapsto (\tau^{a_ks_k} x_0^{(k)}: \tau^{-b_ks_k}x_1^{(k)}) , \\
    (y_0^{(n+1-k)};y_1^{(n+1-k)}) &\longmapsto (\tau^{-c_ks_{n+1-k}} y_0^{(k)}: \tau^{d_k s_{n+1-k}}y_1^{(k)}).
\end{align*}
We will see in the next section that the Hilbert-Mumford invariants that interest us, which are the invariants relating to 1-PS subgroups of the induced action of $G$ on $H^m_{[n]}$ with associated line bundle $\cM$, can be calculated by simply adding the invariants of the points (with multiplicity) in $X[n]$ which make up the support of an element of $H^m_{[n]}$. Indeed, we will see that it is possible for this purpose to think of an element of $H^m_{[n]}$ as just a union of points with multiplicity in $X[n]$ and forget about its scheme structure.

\subsection{Bounded and combinatorial weights}\label{bounded and comb weights}
In this section, we explain the relation between what \cite{GHH} call the bounded and combinatorial weights of the Hilbert-Mumford invariants.

\medskip
Keeping the notation as consistent as possible with \cite{GHH}, let
\[
Z^m_{[n]}\subset H^m_{[n]}\times_{C[n]} X[n]
\]
be the universal family, with first and second projections $p$ and $q$. The line bundle
\[
\cM_l\coloneqq \det p_*(q^*\cL^{\otimes l}|_{Z^m_{[n]}})
\]
is relatively ample when $l\gg0$ and is $G$-linearised, exactly as in Section 2.2.1 of \cite{GHH}.

\subsubsection*{Relationship between bounded and combinatorial weights.} The following lemmas describe how the Hilbert-Mumford invariant can be decomposed into a sum of invariants.

\begin{lemma}
    Given a point $[Z]\in H^m_{[n]}$ and a 1-PS $\lambda_s$ given by $(s_1,\ldots,s_{n})\in \ZZ^{n}$, denote the limit of $\lambda_s(\tau)\cdot Z$ as $\tau$ tends to zero by $Z_0$. The Hilbert-Mumford invariant can be decomposed into a sum
    \[
    \mu^{\cM_l}(Z,\lambda_s) = \mu_b^{\cM_1}(Z,\lambda_s) + l \cdot \mu_c^{\cM_1}(Z,\lambda_s)
    \]
    of the \emph{bounded weight} $\mu_b^{\cM_l}(Z,\lambda_s)$, coming from the scheme structure of $Z_0$, and the \emph{combinatorial weight} $\mu_c^{\cM_l} (Z,\lambda_s)$, coming from the weights of the 1-PS action on $\cL$.
\end{lemma}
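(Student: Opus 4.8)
The plan is to compute $\mu^{\cM_l}(Z,\lambda_s)$ directly from the definition and isolate the dependence on $l$. Recall that $\mu^{\cM_l}(Z,\lambda_s)$ is the negative of the weight of the $\GG_m$-action (via $\lambda_s$) on the fibre of $\cM_l = \det p_*(q^*\cL^{\otimes l}|_{Z^m_{[n]}})$ over the limit point $[Z_0]$. First I would observe that the limit $Z_0$ is itself $\lambda_s$-invariant, so the fibre $\cM_l([Z_0])$ carries a well-defined $\GG_m$-weight. Since $Z_0$ is a length $m$ subscheme of the fixed fibre $X[n]_{(p_n(Z))_0}$, the sheaf $q^*\cL^{\otimes l}|_{Z_0}$ is a length $m$ module supported on $Z_0$, and $p_*$ of it is an $m$-dimensional vector space on which $\GG_m$ acts; the weight on $\det$ of this space is the \emph{sum} of the weights on a $\GG_m$-eigenbasis. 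This is the key structural fact: the Hilbert–Mumford invariant of $Z$ is the sum, over an eigenbasis of the length-$m$ space $H^0(Z_0, q^*\cL^{\otimes l}|_{Z_0})$, of the individual eigenweights.

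Next I would analyse how each such eigenweight depends on $l$. Because $\cL^{\otimes l}$ is an $l$-th tensor power, the weight of the $\GG_m$-action on a fibre $\cL^{\otimes l}(P)$ at a point $P$ is $l$ times the weight on $\cL(P)$; this contributes the combinatorial part. However, the vector space $H^0(Z_0, q^*\cL^{\otimes l}|_{Z_0})$ and its eigenbasis also encode the scheme structure of $Z_0$ (e.g.\ when $Z_0$ is non-reduced, tangent directions pick up extra weights coming from how $\lambda_s$ acts on the local coordinates of $X[n]$, independently of $l$). Separating these two sources, I would write the total weight as $(\text{term independent of } l) + l\cdot(\text{term linear in } l\text{'s coefficient})$, and then define $\mu_c^{\cM_1}(Z,\lambda_s)$ to be the coefficient of $l$ and $\mu_b^{\cM_1}(Z,\lambda_s)$ to be the constant term, exactly as in Section 2.2.1 of \cite{GHH}. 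The identity $\mu^{\cM_l} = \mu_b^{\cM_1} + l\,\mu_c^{\cM_1}$ then follows, with the superscript on the left read via $\mu^{\cM_l} = \mu^{\cM_1}_b + l\mu^{\cM_1}_c$ after noting that $\mu_b^{\cM_l}$ does not actually depend on $l$ (hence the notation collapses).

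The main obstacle I anticipate is making rigorous the claim that the $\GG_m$-weight on $\det p_*(q^*\cL^{\otimes l}|_{Z_0})$ is linear in $l$ with the constant term genuinely capturing "the scheme structure." Concretely this requires: (i) a clean description of $p_*(q^*\cL^{\otimes l}|_{Z^m_{[n]}})$ near $[Z_0]$ as a flat family of length-$m$ modules (so $\det$ makes sense and is compatible with the linearisation), which follows from relative ampleness for $l\gg 0$; and (ii) a local computation at each point of $\Supp(Z_0)$ showing that in a $\GG_m$-equivariant local trivialisation the eigenweights of the length-$m$ stalk split as $l\cdot(\text{weight of }\cL) + (\text{weights from the local coordinates of }X[n])$. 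Since the $\GG_m$-action on $X[n]$ is explicit (Proposition \ref{group proposition}) and the linearisation on $\cL$ is explicit (Lemma \ref{line bun}), this local computation is a finite, if slightly involved, check; I would follow the argument of \cite{GHH} almost verbatim, the only new feature being the presence of $\Delta$-components of mixed type, where a single $\PP^1$-coordinate may carry weights tied to two different $\tau_k$'s — but along the $\pi^*((Y_1\cap Y_2)^\circ)$ locus the relation $\Delta_1^{(k)} = \Delta_2^{(n+1-k)}$ makes these consistent, so no genuinely new phenomenon arises and the decomposition goes through.
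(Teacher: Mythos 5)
Your proposal is correct and follows essentially the same route as the paper: both identify the fibre of $\cM_l$ at $[Z_0]$ with $\det H^0(\cO_{Z_0}\otimes\cL^{\otimes l})$, split it pointwise over the support as $\bigoplus_P\bigl(H^0(\cO_{Z_0,P})\otimes\cL^{\otimes l}(P)\bigr)$ so that the determinant factors into a scheme-structure part (independent of $l$) and an $\cL$-fibre part (linear in $l$), and import the argument from \cite{GHH}. The extra care you take about equivariant trivialisations and mixed-type components is consistent with, and not different in substance from, the paper's proof.
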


\begin{proof}
 The Hilbert-Mumford invariant $\mu^{\cM_l}(Z,\lambda_s)$ is given by the negative of the weight of the $\GG_m$-action on the line bundle $\cM_l$ at the point $Z_0$. At the point $Z_0$, the line bundle $\cM_l$ is given by $\det (H^0(\cO_{Z_0} \otimes \cL^{\otimes l}))$. We can write $Z_0$ as a union of length $n_P$ zero-dimensional subschemes $\bigcup_P Z_{0,P}$ supported at points $P$. Let $\cL^{\otimes l}(P)$ denote the fibre of $\cL^{\otimes l}$ at $P$. Following \cite{GHH}, there is an isomorphism

\[
H^0(\cO_{Z_0} \otimes \cL^{\otimes l}) \cong \bigoplus_P \bigl( H^0(\cO_{Z_0,P}) \otimes \cL^{\otimes l}(P) \bigl).
\]
Then, by taking determinants, as in \cite{GHH}, we get
\[
\Wedge^m H^0(\cO_{Z_0} \otimes \cL^{\otimes l}) \cong
\Bigl( \Wedge^m H^0(\cO_{Z_0})\Bigl) \otimes \Bigl( \bigotimes_P \cL^{\otimes l n_P}(P) \Bigl).
\]
which allows us to write the invariant $\mu^{\cM_l}(Z,\lambda_s)$ as a sum of its \emph{bounded weight} $\mu_b^{\cM_l}(Z,\lambda_s)$, coming from $\Wedge^m H^0(\cO_{Z_0})$ in the above, and its \emph{combinatorial weight} $\mu_c^{\cM_l} (Z,\lambda_s)$, coming from $\bigotimes_P \cL^{\otimes ln_P}(P)$. It is clear also that
\[
\mu_b^{\cM_l}(Z,\lambda_s) = \mu_b^{\cM_1}(Z,\lambda_s),
\]
since the bounded weight does not depend on the value of $l$, and
\[
\mu_c^{\cM_l}(Z,\lambda_s) = l \cdot \mu_c^{\cM_1}(Z,\lambda_s).
\]
Hence, we have
\[
\mu^{\cM_l}(Z,\lambda_s) = \mu_b^{\cM_1}(Z,\lambda_s) + l \cdot \mu_c^{\cM_1}(Z,\lambda_s).
\]
\end{proof}

Note that, whereas the combinatorial weight depends on the choice of linearised line bundle, the bounded weight does not. Similarly to \cite{GHH}, we can show that the bounded weight, as its name suggests, can be given an upper bound.

\medskip
\emph{Terminology.} In the proofs of the next results, we will say that a point of the support of a subscheme is on a certain \emph{side} of a $\Delta$-component to describe whether it lies on the $(0:1)$ or $(1:0)$ side of the corresponding $\PP^1$.

\medskip
The following result is based on Lemma 2.3 of \cite{GHH}, with some slight modifications to suit our setting.

\begin{lemma}\label{bounded}
Let $\mu_b^{\cM_1}(Z,\lambda_s)$ be the bounded weight of $[Z]\in H_{[n]}^m$ and $s\in \ZZ^{n}$ such that the limit of $\lambda_s(\tau)\cdot Z$ as $\tau$ goes to zero exists. Then
\[
\mu_b^{\cM_1}(Z,\lambda_s) = \sum_{i=1}^{n} b_is_i,
\]
where $|b_i|\leq 2m^2$ for every $i$.
\end{lemma}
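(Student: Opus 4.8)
The plan is to follow the strategy of Lemma 2.3 of \cite{GHH}, adapting the bookkeeping to the present three-component setting. Recall that the bounded weight $\mu_b^{\cM_1}(Z,\lambda_s)$ comes entirely from the factor $\Wedge^m H^0(\cO_{Z_0})$ in the decomposition of $\Wedge^m H^0(\cO_{Z_0}\otimes\cL^{\otimes l})$, so it depends only on the scheme structure of the limit $Z_0$ and not on $\cL$. First I would fix a $1$-PS $\lambda_s$ with $(s_1,\dots,s_n)\in\ZZ^n$ satisfying the boundedness condition \eqref{boundedness}, so that $Z_0 = \lim_{\tau\to 0}\lambda_s(\tau)\cdot Z$ exists, and record that $\mu_b^{\cM_1}(Z,\lambda_s)$ is by definition the negative of the weight of the $\GG_m$-action on $\Wedge^m H^0(\cO_{Z_0})$. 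The first step is to observe that this weight is additive over the points $P$ in the support of $Z_0$: writing $Z_0 = \bigcup_P Z_{0,P}$ with $Z_{0,P}$ of length $n_P$ and $\sum_P n_P = m$, the weight of $\GG_m$ on $\Wedge^m H^0(\cO_{Z_0})$ is the sum over $P$ of the weights on $\Wedge^{n_P} H^0(\cO_{Z_{0,P}})$.

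Next I would make this weight linear in $s$. The $\GG_m$-action on a local ring $\cO_{Z_{0,P}}$ is induced by the action on the coordinates of the ambient chart of $X[n]$; from the explicit formulas in Section~\ref{1-PS}, each coordinate direction is scaled by $\tau$ raised to a $\ZZ$-linear form in $s_1,\dots,s_n$ (indeed each lift weight is of the shape $\pm a_k s_k$ or a difference $s_{k-1}-s_k$ on the base directions, with the scheme-structure contribution being independent of the $a_k,b_k,c_k,d_k$). Since $H^0(\cO_{Z_{0,P}})$ is a finite-dimensional quotient, it decomposes into $\GG_m$-weight spaces whose weights are integer linear combinations of the $s_i$; hence the weight on $\Wedge^{n_P}H^0(\cO_{Z_{0,P}})$ is $\sum_i b_i^{(P)} s_i$ for integers $b_i^{(P)}$, and summing over $P$ gives $\mu_b^{\cM_1}(Z,\lambda_s) = \sum_{i=1}^n b_i s_i$ with $b_i = -\sum_P b_i^{(P)}$.

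The remaining, and genuinely quantitative, step is the bound $|b_i|\le 2m^2$. Here I would argue as in \cite{GHH}: for a fixed basis direction index $i$, the coefficient $b_i^{(P)}$ is a sum of $n_P$ weights (one per basis vector of a weight-homogeneous basis of $H^0(\cO_{Z_{0,P}})$), and each such weight, coming from the local monomial structure of a length-$n_P$ subscheme, is bounded in absolute value by a linear function of $n_P$ — concretely by something like $2n_P$, since the exponents of the relevant coordinates occurring in a length-$n_P$ subscheme are at most $n_P$ and at most two coordinate directions carry the given $s_i$ in their weight (one from an $x$-type blow-up coordinate and one from a $y$-type one, or from the two adjacent base directions $t_i,t_{i+1}$). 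Thus $|b_i^{(P)}|\le 2n_P^2$, and summing, $|b_i|\le\sum_P 2n_P^2 \le 2\bigl(\sum_P n_P\bigr)^2 = 2m^2$. The main obstacle is precisely this last estimate: one must check carefully, using the explicit blow-up equations \eqref{BU1 eqns} and \eqref{BU2 eqns} and the description of the fibres of $X[n]$, that no coordinate appearing with nonzero $\tau$-weight in a local chart contributes a weight larger than $O(n_P)$ in the $s_i$ variable, and that for each $i$ only a bounded number of coordinate directions are involved; once this is in place the convexity inequality $\sum n_P^2 \le (\sum n_P)^2$ finishes the argument. I would model the details on the proof of Lemma~2.3 in \cite{GHH}, noting that the extra $Y_3$ component and the mixed-type $\Delta$-components do not increase the per-point contribution beyond the two-coordinate count already present there.
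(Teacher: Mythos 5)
Your proposal is correct and follows essentially the same route as the paper's proof: decompose the bounded weight additively over the support points of $Z_0$, use that the local coordinate ring at a point of multiplicity $n_P$ is generated by $n_P$ monomials of degree at most $n_P$ in the chart coordinates, note that for each index $k$ only the two projective directions $x_0^{(k)}/x_1^{(k)}$ and $y_0^{(n+1-k)}/y_1^{(n+1-k)}$ contribute weight in $s_k$, and conclude via $\sum_P n_P^2 \le m^2$ that $|b_k|\le 2m^2$. The paper phrases the final estimate as each of the two parts (from $\Delta_1^{(k)}$ and $\Delta_2^{(n+1-k)}$) being bounded by $m^2$ rather than bounding per point by $2n_P^2$, but this is the same bookkeeping.
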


\begin{proof}
Let $Z_0$ be the limit point of $Z$ with respect to some $s\in \ZZ^{n}$, where $Z_0$ is supported at points $Q_i\in X[n]$. Since $Z_0$ is a limit point of the action, each $Q_i$ must be a $\GG_m$-fixpoint. And since $Z_{0,Q_i}$ is a finite local scheme, we can work with the local coordinates we set up earlier.

\medskip
Following our previous notation, let $n_{Q_i}$ denote the multiplicity of the scheme $Z_0$ at the point $Q_i$. The coordinate ring of $Z_{0,Q_i}$ is then generated by $n_{Q_i}$ monomials in the variables $x,y,z,t_1,\ldots,t_{n+1}$ and $x_0^{(k)}/x_1^{(k)}$ or $x_1^{(k)}/x_0^{(k)}$ depending on which side of $\Delta_1^{(k)}$ the point $Q_i$ lies, and $y_0^{(k)}/y_1^{(k)}$ or $y_1^{(k)}/y_0^{(k)}$ depending on which side of $\Delta_2^{(k)}$ the point $Q_i$ lies. Note that the coordinate ring of $Z_{0,Q_i}$ will only contain monomials in the variable $x_0^{(k)}/x_1^{(k)}$ or $x_1^{(k)}/x_0^{(k)}$ if $Q_i\in \Delta_1^{(k)}$, and similarly for the variable $y_0^{(k)}/y_1^{(k)}$ or $y_1^{(k)}/y_0^{(k)}$. Moreover, if $Q_i\in (\Delta_1^{(k)})^\circ \cup (\Delta_2^{(n+1-k)})^\circ$, then this means that $s_k =0$ as $Z_0$ is the limit of the 1-PS action. So the weight of the $\GG_m$-action on a $\Delta$-component will be nontrivial only if $Q_i$ lies on the boundary of this component.

\medskip
The weight $b_k$ restricted to the point $Q_i$ is given by adding the multiplicity of $x_0^{(k)}/x_1^{(k)}$ times $s_k$ or that of $x_1^{(k)}/x_0^{(k)}$ times $-s_k$ (depending on which side of $\Delta_1^{(k)}$ the point $Q_i$ lies) in each monomial, plus the multiplicity of $y_0^{(n+1-k)}/y_1^{(n+1-k)}$ times $-s_k$ or that of $y_1^{(n+1-k)}/y_0^{(n+1-k)}$ times $s_k$ (depending on which side of $\Delta_2^{(n+1-k)}$ the point $Q_i$ lies) in each monomial. Each monomial has degree at most $n_{Q_i}$. The parts of $b_k$ coming from the actions on $\Delta_1^{(k)}$ and $\Delta_2^{(n+1-k)}$ therefore both have absolute value at most $m^2$, so $|b_k|\leq 2m^2$.
\end{proof}

Let us discuss now how the bounded weight affects the overall stability condition. The following lemma is immediate from \cite{GHH}, but we recall their proof here for convenience.

\begin{lemma}\label{comb over bound}
    Let $Z$ be a length $m$ zero-dimensional subscheme in a fibre of $X[n]$. Assume that, for all $s\in \ZZ^n$ such that the limit $\lambda_s(\tau)\cdot Z$ as $\tau$ tends to zero exists, the combinatorial weight can be written as
    \[
    \mu_c^{\cM_1}(Z,\lambda_s) = \sum_{i=1}^{n} c_is_i,
    \]
    where $c_is_i\geq 0$ with equality if and only if $s_i=0$. Then $Z$ is stable with respect to the $G$-linearised line bundle $\cM_l$ on $H^m_{[n]}$ for some large enough~$l$.
\end{lemma}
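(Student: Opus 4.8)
The plan is to combine the decomposition from the previous lemma with the boundedness estimate on the bounded weight. Recall that for any $[Z] \in H^m_{[n]}$ and any 1-PS $\lambda_s$ with $s = (s_1,\ldots,s_n)$ for which the limit exists, we have
\[
\mu^{\cM_l}(Z,\lambda_s) = \mu_b^{\cM_1}(Z,\lambda_s) + l\cdot\mu_c^{\cM_1}(Z,\lambda_s) = \sum_{i=1}^n b_i s_i + l\sum_{i=1}^n c_i s_i = \sum_{i=1}^n (b_i + l c_i)s_i,
\]
where by Lemma \ref{bounded} we have $|b_i|\leq 2m^2$ for all $i$, and by hypothesis $c_i s_i \geq 0$ with equality precisely when $s_i = 0$. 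The strategy is to show that choosing $l$ large enough forces each term $(b_i + lc_i)s_i$ to be strictly positive whenever $s_i \neq 0$, so that by the relative Hilbert–Mumford criterion (Theorem 1.12) the point $[Z]$ is stable.

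First I would fix attention on an index $i$ with $s_i \neq 0$. By the hypothesis on the combinatorial weight, $c_i s_i > 0$, and since $c_i, s_i$ are integers this gives $c_i s_i \geq 1$, hence $l\cdot c_i s_i \geq l$. Meanwhile $|b_i s_i| \leq |b_i|\cdot|s_i|$, so I need to control $|s_i|$. Here one uses the boundedness condition \eqref{boundedness}, namely $0 \geq s_1 \geq \cdots \geq s_{n+1} = 0$ wait — more precisely the inequalities $0 \geq s_1 \geq \cdots \geq s_n$ together with $s_n \geq 0$ coming from $t_{n+1}$; but in general not all $t_i$ vanish, so one only knows the chain of inequalities holds along the nonzero basis directions. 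The point to extract is simply that the $s_i$ appearing with $s_i\neq 0$ are bounded in absolute value in terms of $m$ and $n$: since $Z$ is supported in a single fibre of $X[n]\to C[n]$, there is a bound $N = N(m,n)$ (indeed, as in \cite{GHH}, the relevant 1-PS can be taken with $|s_i| \leq$ some explicit function of $m$ and $n$, because rescaling $s$ by a positive integer does not change the sign of $\mu$, so we may restrict to a finite set of 1-PS up to positive scaling) with $|s_i| \leq N$. Then $|b_i s_i| \leq 2m^2 N$, and choosing $l > 2m^2 N$ gives $(b_i + lc_i)s_i = b_i s_i + l c_i s_i \geq -2m^2 N + l > 0$.

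For the indices $i$ with $s_i = 0$ the corresponding term vanishes, so summing over all $i$ yields $\mu^{\cM_l}(Z,\lambda_s) = \sum_{i : s_i \neq 0}(b_i + lc_i)s_i > 0$ as soon as $\lambda_s$ is nontrivial (i.e.\ some $s_i \neq 0$). Applying Theorem 1.12 then shows $[Z]$ is stable with respect to $\cM_l$. The one subtlety to address carefully — and the main obstacle — is the uniformity of the bound: a priori $l$ might depend on $Z$ through the bound on the weights $s_i$ of the relevant 1-PS, so one must argue that for a fixed $m$ (and $n$) only finitely many 1-PS, up to positive integer scaling, need be tested, giving a bound $N$ depending only on $m$ and $n$. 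This follows from the boundedness constraint \eqref{boundedness} cutting out a polyhedral cone in which the relevant directions lie, together with the observation that $\mu^{\cM_l}(Z,\lambda_{s})$ and $\mu^{\cM_l}(Z,\lambda_{ks})$ have the same sign for $k \in \ZZ_{>0}$; hence one may choose $l$ uniformly, and the statement "for some large enough $l$" is justified. The remaining verifications — that $\cM_l$ is genuinely ample for $l \gg 0$ and $G$-linearised, and that the decomposition of $\mu$ is as stated — are exactly the content of the preceding lemmas and of \cite{GHH}.
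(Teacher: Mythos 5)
Your overall strategy is the same as the paper's (decompose $\mu^{\cM_l} = \mu_b^{\cM_1} + l\,\mu_c^{\cM_1}$, use $|b_i|\leq 2m^2$ from Lemma \ref{bounded}, and take $l$ large so the combinatorial part dominates termwise), but the step where you try to bound $|s_i|$ is a genuine gap. You claim that only finitely many 1-PS, up to positive integer scaling, need to be tested, and hence that there is a bound $N=N(m,n)$ with $|s_i|\leq N$. This does not follow from the homogeneity of $\mu$ together with the polyhedral constraint \eqref{boundedness}: a polyhedral cone of dimension at least two contains infinitely many pairwise non-proportional primitive integral vectors (for instance, when several basis directions $t_i$ vanish the corresponding inequalities in \eqref{boundedness} are dropped, and already $(1,k)$ for all $k\in\ZZ_{>0}$ give infinitely many rays), so no such finite list or uniform bound on $|s_i|$ exists by that argument. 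Moreover the coefficients $b_i,c_i$ depend on $s$ through the limit $Z_0$, so even a finite list of directions would not reduce the problem to a single linear functional.

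The good news is that the bound on $|s_i|$ is not needed at all, which is how the intended argument (following \cite{GHH}) avoids the issue. For an index with $s_i\neq 0$ the hypothesis gives $c_is_i>0$ with $c_i,s_i$ nonzero integers of the same sign, hence $c_is_i=|c_i||s_i|\geq |s_i|$; combined with $|b_is_i|\leq 2m^2|s_i|$ this yields
\[
(b_i+lc_i)s_i \;\geq\; (l-2m^2)\,|s_i| \;>\;0 \qquad \text{for } l>2m^2,
\]
equivalently, once $l>2m^2$ the sign of $b_i+lc_i$ agrees with that of $c_i$. Summing over $i$ (terms with $s_i=0$ vanish) gives $\mu^{\cM_l}(Z,\lambda_s)>0$ for every nontrivial $\lambda_s$ admitting a limit, uniformly in $s$, and the relative Hilbert--Mumford criterion gives stability. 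So your proof is repaired by replacing the finiteness/boundedness claim with the observation $c_is_i\geq|s_i|$; as written, however, that step is incorrect as justified.
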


\begin{proof}
    As we have shown that the bounded weight can be expressed as
    \[
    \mu_b^{\cM_1}(Z,\lambda_s) = \sum_{i=1}^{n} b_is_i,
    \]
    where $|b_i|\leq 2m^2$, and recalling that the Hilbert-Mumford invariant can be expressed as
    \[
    \mu^{\cM_l}(Z,\lambda_s) = \mu_b^{\cM_1}(Z,\lambda_s) + l \cdot \mu_c^{\cM_1}(Z,\lambda_s),
    \]
    it is just a matter of choosing a big enough value of $l$ to make the combinatorial weight overpower the bounded weight. This allows us effectively to treat the bounded weight as negligible and ignore it in our computations.
\end{proof}

\begin{remark}
    The assumption of Lemma \ref{comb over bound} does not hold in general for all possible $G$-linearised line bundles on $H^m_{[n]}$.
\end{remark}

\subsubsection*{A criterion for positive combinatorial weights.}

 Let $Z$ be a length $m$ zero-dimensional subscheme in a fibre of $X[n]$. With the following lemmas, we shall establish that if there is at least one point of the support of $Z$ in the union $(\Delta_1^{(k)})^\circ \cup (\Delta_2^{(n+1-k)})^\circ$ for every $k$ (where these $\Delta$-components are not necessarily expanded out), then there exists a GIT stability condition which makes $Z$ stable. We start by showing that for such a subscheme $Z$ there exists a $G$-linearised line bundle $\cM$ on $H^m_{[n]}$ such that the corresponding combinatorial weight will be strictly positive. We will then use Lemma \ref{comb over bound} to show that $Z$ is stable in the corresponding GIT stability.

 \begin{remark}
     Note, here, that such a $Z$ will not necessarily be smoothly supported, nor will every point of the support of $Z$ necessarily be contained in a $\Delta$-component.
 \end{remark}
 
\begin{lemma}\label{line bun for pos comb weight}
    Let $Z$ be in a fibre of $X[n]$ as above. If there is at least one point of the support of $Z$ in the union $(\Delta_1^{(k)})^\circ \cup (\Delta_2^{(n+1-k)})^\circ$ for every $k$, then there exists a $G$-linearised line bundle on $H^m_{[n]}$ with respect to which the combinatorial weight of $Z$ is strictly positive for every nontrivial 1-PS $\lambda_s$ such that the limit of $\lambda_s(\tau)\cdot Z$ as $\tau$ tends to zero exists.
\end{lemma}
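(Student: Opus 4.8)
The plan is to construct an explicit $G$-linearised line bundle $\cL$ on $X[n]$ of the type described in Lemma \ref{line bun}, by making a suitable choice of the positive integers $a_k, b_k, c_k, d_k$, and then to track how the combinatorial weight $\mu_c^{\cM_1}(Z,\lambda_s)$ decomposes as $\sum_k c_k s_k$ over the points of the support of $Z$. Recall from Section \ref{1-PS} that the combinatorial weight is obtained by summing, over all points $P$ in the support of $Z$ (with multiplicity $n_P$), the contribution $n_P$ times the weight of the $\GG_m$-action on the fibre $\cL(P)$. So the first step is to compute, for a $\GG_m$-fixed point $P$ lying on a given side of the relevant $\PP^1$'s in the expanded fibre, exactly which power of $\tau$ appears — using the lifts \eqref{1lift1} and \eqref{1lift2} together with the boundedness constraint \eqref{boundedness}, namely $0 \geq s_1 \geq \cdots \geq s_{n+1} \geq 0$, which forces $s_k = 0$ whenever $P$ lies in the interior $(\Delta_1^{(k)})^\circ \cup (\Delta_2^{(n+1-k)})^\circ$.

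The key observation to exploit is the hypothesis: for every index $k$ there is at least one point $P_k$ of the support with $P_k \in (\Delta_1^{(k)})^\circ \cup (\Delta_2^{(n+1-k)})^\circ$. Such a point contributes $0$ to the coefficient of $s_k$ coming from that $\Delta$-component (since it sits in the interior), but it sits on a definite side of the $\PP^1$'s for the \emph{other} indices, and the sign of its contribution to each $c_j$ ($j \neq k$) is determined by which side it lies on. The idea is then to choose the weights $a_k, b_k, c_k, d_k$ large enough, and in a hierarchical (e.g.\ rapidly increasing) fashion, so that for each nontrivial $\lambda_s$ — equivalently, for each nonempty set of indices $k$ with $s_k < 0$ — the positive contributions to $\mu_c^{\cM_1}(Z,\lambda_s)$ coming from the points witnessing transversality dominate any negative contributions from points sitting on the "wrong" side of other $\Delta$-components. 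Concretely, one wants to arrange, for each $k$, that $c_k > 0$ for the given $\lambda_s$, i.e.\ the coefficient of $s_k$ in $\mu_c^{\cM_1}$ has the sign making $c_k s_k \geq 0$ with equality iff $s_k = 0$, which is precisely the hypothesis of Lemma \ref{comb over bound}. Because the coefficients $a_k, b_k, c_k, d_k$ may be chosen independently for each $k$, I would set them up so that the action weight associated to index $k$ is so much larger than those for indices $j < k$ that the index-$k$ contribution cannot be cancelled.

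The main obstacle I anticipate is handling the points of the support that lie on $\Delta$-components of \emph{mixed type}, where a single point contributes to both an $x_0^{(k)}/x_1^{(k)}$-type monomial and a $y_0^{(n+1-k)}/y_1^{(n+1-k)}$-type monomial, as well as the points that may lie on several nested $\Delta_1$-components simultaneously (those that are "equal to" each other in the sense of Definition \ref{equal to}); one must check that the sign bookkeeping for $c_k$ still works out, using the relations from the blow-up equations \eqref{BU1 eqns} and \eqref{BU2 eqns} and the identification $\Delta_1^{(l)} = \Delta_2^{(n+1-l)}$ in the $\pi^*((Y_1\cap Y_2)^\circ)$ locus that is already built into the group action. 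I would argue that, on a fixed point of the support, the $\GG_m$-weight contributed along index $k$ is a nonnegative integer combination of the chosen positive integers $a_k, b_k, c_k, d_k$ times $\pm s_k$ with a sign that is constant on each stratum, and that the hypothesis guarantees at least one summand of the favourable sign with no summand of the opposite sign for index $k$ (since the witnessing point $P_k$ is interior). Once the positivity $c_k s_k \geq 0$ with equality iff $s_k = 0$ is established for this choice of $\cL$, Lemma \ref{comb over bound} immediately upgrades it to stability of $Z$ for $\cM_l$ with $l \gg 0$, completing the argument. The remaining care is just to confirm that the finitely many sign constraints (one family per subset of "active" indices) can be met simultaneously by a single choice of the $a_k,b_k,c_k,d_k$, which the independence of these parameters across $k$ makes possible.
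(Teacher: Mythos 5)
Your overall frame (build $\cL$ via Lemma \ref{line bun}, decompose the combinatorial weight as $\sum_k c_k s_k$, and then invoke Lemma \ref{comb over bound}) is the same as the paper's, but the step where you actually establish positivity has a genuine gap. The claim that for each index $k$ the hypothesis yields ``at least one summand of the favourable sign with no summand of the opposite sign'' is false: via the \emph{equal to} identifications, every point of the support carries coordinates $(x_0^{(k)}:x_1^{(k)})$ and $(y_0^{(n+1-k)}:y_1^{(n+1-k)})$, so points sitting at the $(1{:}0)$ or $(0{:}1)$ ends contribute to the coefficient of $s_k$ with fixed signs that can oppose the contribution of the interior witness $P_k$. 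Moreover the witness's own contribution is not of one fixed sign: its limit lies at one end when $s_k>0$ and at the other when $s_k<0$, so the favourable weight is $b_k$ in one case and $a_k$ in the other. Writing $A$, $B$, $I$ for the (weighted) amounts of support at the two ends and in the interior of the $k$-th coordinate, one needs simultaneously something like $(B+I)b_k > A a_k$ and $(A+I)a_k > B b_k$ (plus control of the $y$-side terms); this is a constraint on the \emph{ratios} $a_k:b_k$ and $c_k:d_k$, adapted to the configuration of $Z$, not on the size of the weights.

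Your proposed mechanism --- take the weights large and hierarchically increasing in $k$ --- cannot supply this. The coefficient of $s_k$ involves only $a_k,b_k,c_k,d_k$, so cross-index dominance buys nothing; and since there are admissible nontrivial 1-PS supported on a single index (e.g.\ when $t_k=t_{k+1}=0$, so $s_k$ is unconstrained while all other $s_j$ vanish), positivity index by index is forced in any case. Scaling $a_k,b_k,c_k,d_k$ up uniformly leaves the sign balance unchanged, and, for instance, equal weights fail for $s_k>0$ as soon as most of the support sits on the $(1{:}0)$ side of $\Delta_1^{(k)}$. What the paper does instead is choose the weights from the configuration: if the witness lies in $(\Delta_1^{(k)})^\circ$ and $m'$ points of the support lie on the $(1{:}0)$ side of $\Delta_1^{(k)}$, take $a_k=m(m-m')$, $b_k=m(m'+1)$, $c_k=0$, $d_k=1$ (mirrored when the witness lies only in $(\Delta_2^{(n+1-k)})^\circ$), and then check the two sign cases of $s_k$ separately, bounding the loss from the $y$-side by $m'$ using $\Delta_1^{(k)}=\Delta_2^{(n+1-k)}$ along $\pi^*((Y_1\cap Y_2)^\circ)$; this verification is the content of Lemma \ref{pos terms in comb sum}. (A smaller slip: admissible $s$ need not have $s_k\le 0$ --- which inequalities in \eqref{boundedness} are active depends on which $t_i$ vanish --- so your reduction to ``indices with $s_k<0$'' is also off.)
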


\begin{proof}
    We will construct a $G$-linearised line bundle $\cL$ on $X[n]$ as in Lemma \ref{line bun}, by specifying lifts of the $G$-action on each $\PP(\cF_1^{(k)})$ and $\PP(\cF_2^{(n+1-k)})$ to line bundles $\cO_{\PP( \cF_1^{(k)})}(a_k+b_k)$ and $\cO_{\PP( \cF_{2}^{(n+1-k)})}(c_k+d_k)$ for some chosen values $a_k,b_k,c_k,d_k\in \ZZ_{\geq 0}$.

    \medskip
    Let $k\in \{ 1, \dots, n \}$. If there is some point of the support of $Z$, denoted $P$, in $(\Delta_1^{(k)})^\circ \subseteq \pi^*(Y_1\cap Y_3)$, and if $m'$ points of the support lie on the $(1:0)$ side of $\Delta_1^{(k)}$, then we will want the lift of the $G$-action on $\PP(\cF_1^{(k)})$ to $\cO_{\PP( \cF_1^{(k)})}(a_k+b_k)$ to be locally given by
    \[
    (x_0^{(k)}; x_1^{(k)}) \longmapsto (\tau_k^{m(m-m')} x_0^{(k)}; \tau_k^{-m(m'+1)} x_1^{(k)})\in \AA^2.
    \]
    This lift is therefore defined on $\cO_{\PP(\cF_1^{(k)})}(m^2+m)$, i.e.\ we have chosen $a_k= m(m-m')$ and $b_k = m(m'+1)$. We will then choose $c_k = 0$ and $d_k = 1$, so that the action on $\PP( \cF_{2}^{(n+1-k)})$ lifts to $\cO_{\PP( \cF_{2}^{(n+1-k)})}(1)$ and it is locally given by
    \[
    (y_0^{(n+1-k)}; y_1^{(n+1-k)}) \longmapsto ( y_0^{(n+1-k)}; \tau_k y_1^{(n+1-k)})\in \AA^2.
    \]
    If there is no point of the support of $Z$ in $\Delta_1^{(k)}$, we set the lift of the $G$-action on $\PP(\cF_1^{(k)})$ to $\cO_{\PP( \cF_1^{(k)})}(1)$ to be locally given by
    \[
    (x_0^{(k)}; x_1^{(k)}) \longmapsto (\tau_k x_0^{(k)}; x_1^{(k)})\in \AA^2,
    \]
    i.e.\ we have chosen $a_k=1$ and $b_k=0$. In this case there must be at least one point of the support in $(\Delta_2^{n+1-k})^\circ$. Let $m''$ be the number of points of the support on the $(1:0)$ side of $\Delta_2^{(n+1-k)}$. We then set $c_k = m(m-m'')$ and $d_k = m(m''+1)$, i.e.\ we have a lift of the $G$-action on $\PP( \cF_{2}^{(n+1-k)})$ to $\cO_{\PP( \cF_{2}^{(n+1-k)})}(m^2+m)$, locally given by
    \[
    (y_0^{(n+1-k)}; y_1^{(n+1-k)}) \longmapsto (\tau_k^{-m(m-m'')} y_0^{(n+1-k)}; \tau_k^{-m(m''+1)} y_1^{(n+1-k)})\in \AA^2.
    \]
    Repeating this process over all $k\in \{ 1, \dots, n \}$ will give us a description of $\cL$ and we may form the $G$-linearised line bundle $\cM$ from this line bundle in the way described at the start of this section. For more details on why this yields a positive combinatorial weight, see the proof of the following lemma. Note that this is not the only GIT stability condition for which $Z$ is stable.
\end{proof}

\begin{lemma}\label{pos terms in comb sum}
    Let $Z$ be as in the statement of Lemma \ref{line bun for pos comb weight} and let $\cM$ be a $G$-linearised line bundle constructed as in the proof of Lemma \ref{line bun for pos comb weight}. Then, for any $s\in \ZZ^n$, the combinatorial weight can be written
    \[
    \mu_c^{\cM}(Z,\lambda_s) = \sum_{i=1}^{n} c_is_i,
    \]
    where $c_is_i \geq 0$ with equality if and only if $s_i=0$.
\end{lemma}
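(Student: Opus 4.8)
The plan is to read off $\mu_c^{\cM}(Z,\lambda_s)$ from the construction of $\cL$ in the proof of Lemma~\ref{line bun for pos comb weight} and then carry out the sign analysis. Write $Z_0$ for the limit of $\lambda_s(\tau)\cdot Z$ as $\tau\to 0$, supported at $\GG_m$-fixed points $P$ with multiplicities $n_P$. By the decomposition of the Hilbert-Mumford invariant recalled above, $\mu_c^{\cM}(Z,\lambda_s)$ is, up to the overall sign convention of \cite{GHH}, the weight of the $\GG_m$-action through $\lambda_s$ on $\bigotimes_P\cL^{\otimes n_P}(P)$, where $\cL(P)$ is the fibre of $\cL$ at $P$. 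The first step is to feed in the embedding $X[n]\hookrightarrow\prod_{k}\bigl(\PP(\cF_1^{(k)})\times\PP(\cF_2^{(n+1-k)})\bigr)$, the description of $\cL$ as a tensor product of pullbacks of line bundles $\cO_{\PP(\cF_i^{(j)})}(\cdot)$, one on each factor, with the lifts fixed in the proof of Lemma~\ref{line bun for pos comb weight}, and the fact that the summands of the $\cF_i^{(k)}$ pulled back from $X$ carry the trivial $G$-action. This splits the weight as a sum over the $\PP^1$-factors, and since $\lambda_s$ acts on both $\PP(\cF_1^{(k)})$ and $\PP(\cF_2^{(n+1-k)})$ through $\tau\mapsto\tau^{s_k}$ (cf.\ Proposition~\ref{group proposition} and Lemma~\ref{line bun}), collecting terms gives $\mu_c^{\cM}(Z,\lambda_s)=\sum_{k=1}^{n}c_ks_k$ with $c_k=\sum_P n_P\varepsilon_k(P)$, where $\varepsilon_k(P)\in\ZZ$ is the contribution attached by the chosen lift to the positions of $P$ on the two $\PP^1$'s $\Delta_1^{(k)}$ and $\Delta_2^{(n+1-k)}$. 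By the analysis preceding Lemma~\ref{bounded}, a support point of $Z_0$ whose $\Delta_1^{(k)}$-coordinate is a $\GG_m$-fixed point contributes $-a_k$ or $b_k$ to $\varepsilon_k(P)$ (and the analogous weights on $\Delta_2^{(n+1-k)}$), a point at a non-fixed value contributes $0$ there but forces $s_k=0$, and in every case the sign of $s_k$ is constrained by the boundedness inequalities \eqref{boundedness} for the non-vanishing basis directions in the fibre of $Z$.

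The second, and main, step is to fix $k$ and check that $c_ks_k\geq 0$, with equality iff $s_k=0$; when $s_k=0$ the term vanishes, so assume $s_k\neq 0$. I would distinguish the two cases of the proof of Lemma~\ref{line bun for pos comb weight}. If $Z$ has a support point in $(\Delta_1^{(k)})^\circ$, use the lift on $\PP(\cF_1^{(k)})$ to $\cO_{\PP(\cF_1^{(k)})}(m^2+m)$ with $a_k=m(m-m')$, $b_k=m(m'+1)$ — where $m'$ is the number, with multiplicity, of support points on the $(1:0)$-side of $\Delta_1^{(k)}$ — and the lift on $\PP(\cF_2^{(n+1-k)})$ to $\cO_{\PP(\cF_2^{(n+1-k)})}(1)$. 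The sign of $s_k$ forced by \eqref{boundedness} determines the $\GG_m$-fixed point of $\Delta_1^{(k)}$ to which the support points in $(\Delta_1^{(k)})^\circ$ flow, and a direct count of $\sum_P n_P\varepsilon_k(P)$ then shows $c_k=s_k\cdot(\text{a strictly positive integer})$: the balanced shape $m(m-m')$ versus $m(m'+1)$ is exactly what keeps the $(1:0)$- and $(0:1)$-side contributions from cancelling, the residual contribution of $\Delta_2^{(n+1-k)}$ (weighted only by $0$ or $1$) is dominated since its total multiplicity is at most $m$ minus the multiplicity in $(\Delta_1^{(k)})^\circ$, and the strict inequality is precisely the extra $+1$ in $m'+1$, present because at least one support point lies in $(\Delta_1^{(k)})^\circ$. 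In the complementary case $Z$ has no support point on $\Delta_1^{(k)}$, so by hypothesis it has one in $(\Delta_2^{(n+1-k)})^\circ$; then $\PP(\cF_1^{(k)})$ carries only the trivial lift to $\cO_{\PP(\cF_1^{(k)})}(1)$ and $\PP(\cF_2^{(n+1-k)})$ the balanced lift to $\cO_{\PP(\cF_2^{(n+1-k)})}(m^2+m)$, and the same count run on $\Delta_2^{(n+1-k)}$ gives $c_ks_k>0$. Summing over $k$ gives the stated decomposition.

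I expect the main obstacle to be exactly the sign bookkeeping in the second step. One must handle uniformly every support point of $Z_0$ — those in the open part of $\Delta_1^{(k)}$ or $\Delta_2^{(n+1-k)}$, those on either side, those on the boundary shared with a neighbouring bubble, and those on no $\Delta$-component at all (cf.\ the remark following Lemma~\ref{line bun for pos comb weight}) — and check, using \eqref{boundedness}, that across all fibre types of $X[n]$ (one, two or three vanishing basis directions, and the loci $\pi^*((Y_i\cap Y_j)^\circ)$ and $\pi^*(Y_1\cap Y_2\cap Y_3)$ of each) the relevant $s_k$ carries the sign that keeps every summand of $c_k$ nonnegative, so that no cancellation occurs and strictness survives whenever $s_k\neq 0$. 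This is the analogue, adapted to the extra $\Delta_2$-components and to subschemes that need not be smoothly supported, of the corresponding weight computation in \cite{GHH}.
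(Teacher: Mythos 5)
Your proposal follows essentially the same route as the paper: decompose the combinatorial weight into per-index contributions read off from the fibres of $\cL$ at the (fixed) support points of the limit, then for each $k$ run the case analysis of the construction in Lemma \ref{line bun for pos comb weight} together with the sign of $s_k$, using the deliberately unbalanced weights $m(m-m')$ versus $m(m'+1)$ (with the extra $+1$ giving strictness) and the fact that the $\cO(1)$ factor on the other $\PP^1$ is dominated. The only quibble is the phrase ``$c_k=s_k\cdot(\text{a strictly positive integer})$'', which should read that $c_k$ is a positive integer when $s_k>0$ and a negative one when $s_k<0$ (so $c_ks_k>0$), since $c_k$ is the coefficient and does not scale with $s_k$; otherwise the argument matches the paper's proof.
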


\begin{proof}
    It is clear that the combinatorial weight may be written as a sum
    \[
    \mu_c^{\cM}(Z,\lambda_s) = \sum_{i=1}^{n} c_is_i.
    \]
    Now, let us take any $k\in \{ 1, \dots, n \}$. First, let us assume that there is at least one point of the support in $(\Delta_1^{(k)})^\circ \subseteq \pi^*(Y_1\cap Y_3)$ and denote by $m'$ the number of points of the support on the $(1:0)$ side of $\Delta_1^{(k)}$. Then, if $s_k>0$,
    \begin{align*}
        &c_ks_k \geq (-m'm(m-m') + (m-m')m(m'+1))s_k -(m')s_k = (m^2-m')s_k \geq 0.
    \end{align*}
    Here, $m^2$ corresponds to the weight coming from $\PP( \cF_{1}^{(k)})$ and $m'$ corresponds to the weight coming from $\PP( \cF_{2}^{(n+1-k)})$. The value $m'$ arises from the fact that there are at most $m'$ points of the support on the $(0:1)$ side of $\Delta_2^{(n+1-k)}$. And since $m'$ can be at most $m-1$, we have that $m^2-m'>0$.

    Now, if $s_k<0$, then
    \[
    c_ks_k \geq (-(m'+1)m(m-m') +(m-m'-1)m(m'+1))s_k +0 = -(m'+1)ms_k \geq 0,
    \]
    where again the two terms correspond to the weights coming from $\PP( \cF_{1}^{(k)})$ and $\PP( \cF_{2}^{(n+1-k)})$. As $(m'+1)m>0$, this gives the desired answer.

    Finally, if there is no point of the support in $(\Delta_1^{(k)})^\circ \subseteq \pi^*(Y_1\cap Y_3)$, we can make a very similar argument, as the weight coming from $\PP( \cF_{1}^{(k)})$ is overpowered by the weight coming from $\PP( \cF_{2}^{(n+1-k)})$ in the line bundle $\cM$ we set up.
\end{proof}

\subsection{Semistable locus and GIT quotient}\label{GIT quotients}

\begin{lemma}\label{exists stable}
    Let $Z$ be as in the statement of Lemma \ref{line bun for pos comb weight}. Then there exists a GIT stability condition on $H^m_{[n]}$ which makes $Z$ stable.
\end{lemma}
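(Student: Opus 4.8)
The plan is to assemble the statement directly from the two preceding lemmas. Given a subscheme $Z$ as in Lemma \ref{line bun for pos comb weight}, namely with at least one point of its support in $(\Delta_1^{(k)})^\circ \cup (\Delta_2^{(n+1-k)})^\circ$ for every $k \in \{1, \dots, n\}$, Lemma \ref{line bun for pos comb weight} produces a $G$-linearised line bundle $\cM$ on $H^m_{[n]}$ such that the combinatorial weight $\mu_c^{\cM}(Z,\lambda_s)$ is strictly positive for every nontrivial 1-PS $\lambda_s$ for which the limit of $\lambda_s(\tau)\cdot Z$ exists as $\tau\to 0$. Lemma \ref{pos terms in comb sum} refines this by saying that $\mu_c^{\cM}(Z,\lambda_s) = \sum_{i=1}^n c_i s_i$ with each $c_i s_i \geq 0$, with equality if and only if $s_i = 0$. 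So the first step is simply to invoke these two lemmas to get a specific $\cM$ and the sign-definiteness of the termwise contributions to the combinatorial weight.

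The second step is to upgrade the strictly positive combinatorial weight to strict positivity of the full Hilbert--Mumford invariant. For this I would apply Lemma \ref{comb over bound}, whose hypothesis is precisely the conclusion of Lemma \ref{pos terms in comb sum}: the combinatorial weight for every admissible 1-PS is a sum $\sum c_i s_i$ with $c_i s_i \geq 0$ and equality iff $s_i = 0$. Lemma \ref{comb over bound} then tells us that, taking $\cM_l$ for $l$ sufficiently large, the bounded weight (which is bounded in absolute value by $2m^2$ on each coordinate, by Lemma \ref{bounded}) is dominated by $l$ times the combinatorial weight, so that $\mu^{\cM_l}(Z,\lambda_s) = \mu_b^{\cM_1}(Z,\lambda_s) + l\cdot\mu_c^{\cM_1}(Z,\lambda_s) > 0$ for every nontrivial admissible 1-PS. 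Concretely, for any nontrivial $\lambda_s$ whose limit exists, some $s_i \neq 0$, hence $\mu_c^{\cM_1}(Z,\lambda_s) \geq \min_i |c_i| > 0$, while $|\mu_b^{\cM_1}(Z,\lambda_s)| \leq 2m^2 \sum |s_i|$; choosing $l$ large enough relative to these bounds (and there are only finitely many relevant directions up to scaling, or one argues uniformly) forces the total to be positive.

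The final step is to invoke the relative Hilbert--Mumford criterion (the cited theorem of Gulbrandsen--Halle--Hulek, applied with $H = G$, $S = H^m_{[n]}$, $B = C[n]$, and $L = \cM_l$): a closed point $[Z]$ is stable if and only if $\mu^{\cM_l}(\lambda, [Z]) > 0$ for every nontrivial 1-PS $\lambda$ of $G$. For 1-PS whose limit on $[Z]$ does not exist there is nothing to check in the sense that such $\lambda$ do not obstruct stability (equivalently, one only needs the inequality over 1-PS admitting a limit, by the boundedness analysis of Section \ref{1-PS}); and for those whose limit does exist we have just shown the invariant is strictly positive. Therefore $[Z]$ lies in the stable locus for the GIT problem determined by $\cM_l$, which is the asserted GIT stability condition.

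The only genuine subtlety, and the one I would be careful to phrase correctly, is the uniformity in the choice of $l$: Lemma \ref{comb over bound} is stated for a fixed $Z$, so one must make sure that for this fixed $Z$ a single $l$ works simultaneously for all nontrivial admissible 1-PS. This is not an obstacle in substance — the combinatorial weight scales linearly in $s$ and is strictly positive on the (finitely many, up to positive scaling) admissible directions with the correct monotonicity constraints $0 \geq s_1 \geq \dots \geq s_{n+1} \geq 0$ from \eqref{boundedness}, while the bounded weight grows at worst linearly in $\|s\|$ with coefficients bounded by $2m^2$ — so comparing the two on the finitely many primitive admissible directions and taking $l$ past the maximal ratio suffices. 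Hence the proof is essentially a three-line concatenation: apply Lemma \ref{line bun for pos comb weight} and Lemma \ref{pos terms in comb sum} to get $\cM$; apply Lemma \ref{comb over bound} (with Lemma \ref{bounded}) to pass to $\cM_l$ with $\mu^{\cM_l}(Z,\lambda_s) > 0$ for all nontrivial admissible $\lambda_s$; apply the relative Hilbert--Mumford criterion to conclude $Z$ is stable.
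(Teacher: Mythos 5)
Your proposal is correct and follows essentially the same route as the paper: invoke Lemma \ref{line bun for pos comb weight} and Lemma \ref{pos terms in comb sum} to obtain the linearisation $\cM$ with termwise non-negative (and nontrivially positive) combinatorial weight, then apply Lemma \ref{comb over bound} to choose $l$ large enough that the bounded weight is dominated, concluding stability via the (relative) Hilbert--Mumford criterion. Your extra care about the uniformity of $l$ over all admissible 1-PS is a reasonable elaboration of what Lemma \ref{comb over bound} already packages, not a deviation in approach.
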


\begin{proof}
    This follows from Lemmas \ref{comb over bound} and \ref{pos terms in comb sum}. Indeed, by Lemma \ref{comb over bound}, if the combinatorial weight can be written in the form
    \[
    \mu_c^{\cM}(Z,\lambda_s) = \sum_{i=1}^{n} c_is_i,
    \]
    where $c_is_i \geq 0$ with equality if and only if $s_i=0$, then we may choose a high enough tensor power $l$ of $\cM$ such that $Z$ is stable if and only if the combinatorial weight is strictly positive. But this condition is satisfied by Lemma \ref{pos terms in comb sum}.
\end{proof}

\begin{lemma}\label{unstable}
    Let $Z$ be a length $m$ zero-dimensional subscheme in a fibre of $X[n]$, such that no point of the support is contained in the union $(\Delta_1^{(k)})^\circ \cup (\Delta_2^{(n+1-k)})^\circ$ for some $k$ (these components may be expanded or not in the fibre). Then there exists no GIT stability condition on $H^m_{[n]}$ with respect to the group $G$ which makes $Z$ stable.
\end{lemma}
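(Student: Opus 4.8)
The strategy is to show that when there exists some $k$ for which $Z$ has no point of its support in $(\Delta_1^{(k)})^\circ \cup (\Delta_2^{(n+1-k)})^\circ$, then for \emph{every} choice of $G$-linearised line bundle $\cM$ on $H^m_{[n]}$ we can exhibit a nontrivial 1-PS $\lambda_s$ with $\mu^{\cM_l}(Z,\lambda_s)\leq 0$ for all $l$, so that $Z$ fails the Hilbert–Mumford criterion. By symmetry (swapping the roles of the $\Delta_1$ and $\Delta_2$ coordinates, which corresponds to reversing the order of the basis directions) it suffices to treat the case where $Z$ has no point in $(\Delta_1^{(k)})^\circ$ — we must be a little careful here, since the hypothesis is that the support avoids the whole union, so in fact it avoids both $(\Delta_1^{(k)})^\circ$ and $(\Delta_2^{(n+1-k)})^\circ$ simultaneously. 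The key point is that both $\PP^1$-coordinates $x^{(k)}$ and $y^{(n+1-k)}$ are then ``pinned'' at vertices: every point of the support lies on a definite side of $\Delta_1^{(k)}$ and of $\Delta_2^{(n+1-k)}$.

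**The 1-PS to use and the sign computation.** I would take $\lambda_s$ supported in the single coordinate $s_k$, i.e.\ $s_i = 0$ for $i\neq k$ and $s_k = \pm 1$, with the sign chosen depending on the configuration. By the boundedness conditions \eqref{boundedness}, such a 1-PS has a limit (it only interacts with $t_k$ and, if $t_k\neq 0$ in the fibre containing $Z$, we instead pass to the locus where the relevant $\Delta$-components are not expanded, or note that then $s_k$ must vanish — this edge case needs to be handled). Then by Lemma \ref{comb over bound} and the decomposition $\mu^{\cM_l} = \mu_b^{\cM_1} + l\mu_c^{\cM_1}$, since the bounded weight is bounded by $2m^2$ in absolute value, the sign of $\mu^{\cM_l}$ for $l\gg0$ is governed by the sign of the combinatorial weight $c_k s_k$. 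The combinatorial weight $c_k$ is, by the computation in Lemma \ref{pos terms in comb sum}, a sum of two contributions: one from the lift on $\PP(\cF_1^{(k)})$, equal to $-a_k m_1' + b_k(m - m_1')$ where $m_1'$ is the number of support points on the $(1:0)$ side of $\Delta_1^{(k)}$ (all $m$ points lie strictly on one side or the other since none is in the open stratum), and a symmetric one from $\PP(\cF_2^{(n+1-k)})$. Since $a_k,b_k,c_k,d_k$ are positive integers but otherwise arbitrary, I claim I can always choose $s_k\in\{+1,-1\}$ so that $c_k s_k \leq 0$: indeed the contribution from $\PP(\cF_1^{(k)})$ is $(-a_k m_1' + b_k(m-m_1'))s_k$, which I can make nonpositive by an appropriate sign of $s_k$ \emph{unless} $-a_k m_1' + b_k(m - m_1') = 0$; the real content is that I must choose \emph{one} sign of $s_k$ that simultaneously makes both the $\Delta_1^{(k)}$ and the $\Delta_2^{(n+1-k)}$ contributions nonpositive. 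This is where the hypothesis that the support avoids \emph{both} open strata is essential and where I need to think carefully.

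**The main obstacle.** The crux is exactly the sign-compatibility just mentioned: a single scalar $s_k$ must simultaneously destabilise with respect to two independent $\PP^1$-directions, and the linearisation weights $a_k,b_k,c_k,d_k$ are adversarial. The resolution should be: because no support point lies in $(\Delta_1^{(k)})^\circ$, the coordinate $x^{(k)}$ is constant ($=(1:0)$ or $=(0:1)$) across all $m$ support points — more precisely, points in the closure of $(\Delta_1^{(k)})^\circ$ which are $\GG_m$-fixed must sit at one of the two poles, and if $\Delta_1^{(k)}$ is not expanded it is ``equal to'' $Y_1$ or to $Y_2\cup Y_3$ via Definition \ref{equal to}, which again pins the coordinate. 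So all $m$ points are on the \emph{same} side, say the $(1:0)$ side, giving $m_1' = m$ (or $m_1' = 0$); then the $\PP(\cF_1^{(k)})$-contribution to $c_k$ is $-a_k m$ (resp.\ $b_k m$), which has a \emph{fixed sign} independent of the linearisation. The same rigidity applies to $y^{(n+1-k)}$. One then checks that the geometry forces these two fixed signs to be \emph{compatible} — intuitively, being ``pushed toward $Y_1$'' in the $\Delta_1^{(k)}$ direction is the same as being ``pushed toward $Y_1$'' in the $\Delta_2^{(n+1-k)}$ direction because $\Delta_1^{(k)} = \Delta_2^{(n+1-k)}$ in the $\pi^*((Y_1\cap Y_2)^\circ)$ locus — so a single choice of $s_k$ makes $\mu_c^{\cM}(Z,\lambda_s) = c_k s_k \leq 0$, hence $\mu^{\cM_l}(Z,\lambda_s)\leq 0$ for $l$ large, and $Z$ is not stable. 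I would write out the four cases (support toward $Y_1$ or toward $Y_2\cup Y_3$, crossed with whether the components are expanded) and verify the sign bookkeeping in each; this case analysis, rather than any deep idea, is the bulk of the work.
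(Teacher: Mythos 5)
There is a genuine gap, and it sits exactly where you located the ``main obstacle''. Your resolution rests on the claim that, since no point of the support lies in $(\Delta_1^{(k)})^\circ$, all $m$ points of the support lie on the \emph{same} side of $\Delta_1^{(k)}$ (so $m_1'\in\{0,m\}$), and similarly for $\Delta_2^{(n+1-k)}$. That pinning is only valid when these components are \emph{not} expanded out, i.e.\ when they are ``equal to'' $Y_1$ or $Y_2\cup Y_3$ in the sense of Definition \ref{equal to}. When $\Delta_1^{(k)}$ and $\Delta_2^{(n+1-k)}$ are expanded out in the fibre, the hypothesis only excludes the open strata: the support can perfectly well be distributed on both sides (some points in bubbles on the $Y_1$ side, some on the $Y_3$ side), so the $\PP(\cF_1^{(k)})$-contribution $-a_km_1'+b_k(m-m_1')$ has no fixed sign and your case analysis collapses. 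The paper handles the expanded case by a different mechanism: because the support misses $(\Delta_1^{(k)})^\circ\cup(\Delta_2^{(n+1-k)})^\circ$, the $k$-th torus factor acts trivially on $Z$, so the coefficient $a_k$ of $s_k$ in $\mu^{\cM}(Z,\lambda_s)=\sum_i a_is_i$ is independent of $s_k$, while in this case $s_k$ is not constrained in sign by \eqref{boundedness}; hence whichever sign $a_k$ has one chooses $s_k$ of the opposite sign (and $a_k=0$ already forbids strict stability). Relatedly, your worry about choosing one $s_k$ that makes \emph{both} $\PP^1$-contributions nonpositive is a non-issue --- only the total coefficient of $s_k$ matters --- whereas the compatibility you actually must check, in the non-expanded case, is between the forced sign of that coefficient and the sign of $s_k$ permitted by \eqref{boundedness} (in the paper: all points forced to the $(1:0)$ side of $\Delta_1^{(k)}$ and the $(0:1)$ side of $\Delta_2^{(n+1-k)}$ gives $a_k<0$, while \eqref{boundedness} still allows $s_k\geq 0$ arbitrarily large). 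Your sketch treats the sign of $s_k$ as a free choice and never confronts this constraint, and your proposed handling of the ``$t_k\neq 0$'' situation (``pass to the locus where the components are not expanded, or note that $s_k$ must vanish'') does not resolve it.

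There is also a gap of scope. The lemma asserts non-stability for \emph{every} GIT stability condition on $H^m_{[n]}$ with respect to $G$, i.e.\ every $G$-linearised ample line bundle; this generality is what the definition of WS stability and Theorem \ref{stability theorem} require. Your computation uses the explicit weights $a_k,b_k,c_k,d_k$ and the bounded/combinatorial decomposition, both of which are only available for the bundles $\cM_l$ induced from a line bundle $\cL$ on $X[n]$ as in Lemma \ref{line bun}. The paper's proof instead starts from an arbitrary $G$-linearised $\cM$, ``not necessarily constructed as above'', and argues directly with the coefficients of the Hilbert--Mumford invariant. As written, your argument only rules out stability for the special family of linearisations, which is a strictly weaker statement than the lemma.
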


\begin{proof}
    Let us choose an arbitrary $G$-linearised line bundle $\cM$, not necessarily constructed as above, with respect to which $Z$ has Hilbert-Mumford invariant
    \[
    \mu^{\cM}(Z,\lambda_s) = \sum_{i=1}^{n} a_is_i,
    \]
    for some $s\in \ZZ^n$ such that the limit of $\lambda_s(\tau)\cdot Z$ as $\tau$ tends to zero exists.
    Either $a_k = 0$, in which case $Z$ cannot be stable (it will at best be semistable) with respect to the stability condition given by the chosen linearisation, or $a_k\neq 0$.
    
    \medskip
    If $\Delta_1^{(k)}$ and $\Delta_2^{(n+1-k)}$ are expanded out in the fibre, then $s_k$ is not bounded above or below by $0$ or by any weights acting nontrivially outside of these components. Moreover, as no point of the support of $Z$ are contained in $(\Delta_1^{(k)})^\circ \cup (\Delta_2^{(n+1-k)})^\circ$, we know that $\tau^{s_k}$ acts trivially on all points of the support. The integer $a_k$ is therefore independent of the value of $s_k$; different values of $s_k$ will not change $a_k$. If $a_k>0$, we may choose $s_k$ to be negative with large enough absolute value to destabilise $Z$. Similarly, if $a_k<0$, we may choose $s_k$ to be positive and large enough to destabilise~$Z$.

    \medskip
    Finally, if $\Delta_1^{(k)}$ and $\Delta_2^{(n+1-k)}$ are not expanded out in the fibre, either $t_l\neq 0$ for $l \geq k$ or $t_l\neq 0$ for $l \leq k$. If $t_l\neq 0$ for $l \geq k$, then $\Delta_1^{(k)} = Y_1$ and $\Delta_2^{(n+1-k)} = Y_1 \cup Y_3$. All points of the support of $Z$ must therefore be on the $(1:0)$ side of $\Delta_1^{(k)}$ and on the  $(0:1)$ side of $\Delta_2^{(n+1-k)}$, which implies that $a_k<0$. But by the condition \eqref{boundedness}, we have $s_k\geq 0$, and we can therefore choose $s_k$ large enough to destabilise $Z$. A very similar argument can be made if instead $t_l\neq 0$ for $l \leq k$.
\end{proof}

\begin{theorem}\label{stability theorem}
    Let $Z$ be a length $m$ zero-dimensional subscheme in a fibre of $X[n]$. Then there exists a GIT stability condition on $H^m_{[n]}$ which makes $Z$ stable if and only if there is at least one point of the support of $Z$ in $(\Delta_1^{(k)})^\circ \cup (\Delta_2^{(n+1-k)})^\circ$ for every $k$.
\end{theorem}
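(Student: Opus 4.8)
The plan is to combine the two one-directional results that have just been established, namely Lemma~\ref{exists stable} and Lemma~\ref{unstable}, since together they exactly bracket the condition in the statement. First I would note that the hypothesis ``there is at least one point of the support of $Z$ in $(\Delta_1^{(k)})^\circ \cup (\Delta_2^{(n+1-k)})^\circ$ for every $k$'' is precisely the hypothesis of Lemma~\ref{line bun for pos comb weight}, which feeds into Lemma~\ref{exists stable}. So one implication is immediate: if this support condition holds, then Lemma~\ref{exists stable} produces a $G$-linearised line bundle $\cM$ on $H^m_{[n]}$ (constructed as in Lemma~\ref{line bun for pos comb weight}) and, after passing to a sufficiently high tensor power $\cM_l$, the Hilbert--Mumford criterion of Theorem~2.1.1 together with the positivity of the combinatorial weight (Lemma~\ref{pos terms in comb sum}) and the bound on the bounded weight (Lemma~\ref{bounded}, via Lemma~\ref{comb over bound}) shows $Z$ is stable.

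For the converse, I would argue by contraposition. Suppose the support condition fails, i.e.\ there exists some $k$ for which no point of the support of $Z$ lies in $(\Delta_1^{(k)})^\circ \cup (\Delta_2^{(n+1-k)})^\circ$. This is exactly the hypothesis of Lemma~\ref{unstable}, which asserts that \emph{no} $G$-linearised line bundle on $H^m_{[n]}$ makes $Z$ stable. Hence there is no GIT stability condition making $Z$ stable, which is the contrapositive of the ``only if'' direction. Combining the two directions gives the biconditional.

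The only point requiring a little care is the logical bookkeeping around the quantifiers: Lemma~\ref{exists stable} supplies a \emph{single} good line bundle when the condition holds for \emph{all} $k$, while Lemma~\ref{unstable} rules out \emph{every} line bundle as soon as the condition fails for \emph{one} $k$. I would state explicitly that ``there exists a GIT stability condition making $Z$ stable'' means ``there exists a $G$-linearised ample line bundle $\cM$ on $H^m_{[n]}$ such that $Z$ is GIT-stable with respect to $\cM$'', so that the two lemmas are genuinely negations of one another. Beyond this, the proof is essentially a one-line appeal to the two lemmas; there is no substantive obstacle, since all the analytic content (the weight decomposition, the bound $|b_i| \le 2m^2$, the explicit linearisation producing positive combinatorial weight, and the destabilising $1$-PS in the unstable case) has already been carried out in the preceding lemmas.

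\begin{proof}
    This is a direct consequence of Lemmas~\ref{exists stable} and~\ref{unstable}. If there is at least one point of the support of $Z$ in $(\Delta_1^{(k)})^\circ \cup (\Delta_2^{(n+1-k)})^\circ$ for every $k$, then $Z$ satisfies the hypothesis of Lemma~\ref{line bun for pos comb weight}, and so by Lemma~\ref{exists stable} there exists a GIT stability condition on $H^m_{[n]}$ making $Z$ stable. Conversely, if there is some $k$ for which no point of the support of $Z$ lies in $(\Delta_1^{(k)})^\circ \cup (\Delta_2^{(n+1-k)})^\circ$, then by Lemma~\ref{unstable} there is no $G$-linearised line bundle on $H^m_{[n]}$ with respect to which $Z$ is stable, hence no GIT stability condition on $H^m_{[n]}$ makes $Z$ stable.
\end{proof}
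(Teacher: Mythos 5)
Your proof is correct and is essentially identical to the paper's own argument, which likewise deduces the theorem directly from Lemmas~\ref{exists stable} and~\ref{unstable}. The extra remarks on the quantifier bookkeeping are fine but not needed; the two lemmas are indeed exact converses of one another as you note.
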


\begin{proof}
    This follows directly from Lemmas \ref{exists stable} and \ref{unstable}.
\end{proof}

We can now describe the GIT quotients resulting from these constructions. Let
\[
A[n] \coloneqq H^0(C[n],\cO_{C[n]}).
\]
Then we recall from Lemma \ref{G-invariant section of base}, the isomorphism
\[
H^0(C[n],\cO_{C[n]})^G\cong k[t].
\]
For all choices of linearised line bundle described in the above, the GIT quotient on the base therefore behaves as follows
\[
C[n]/\!/G =\Spec A[n]/\!/G = \Spec (A[n]^{G})  \cong \AA^1.
\]
Now let us denote by $H^{m,s}_{[n],\cM}$ the locus of GIT stable subschemes in $H^m_{[n]}$ with respect to the stability condition determined by one of the choices of $G$-linearised line bundle $\cM$ as constructed in Section \ref{bounded and comb weights} and let
\[
I^m_{[n],\cM} \coloneqq H^{m,s}_{[n],\cM}/\!/G
\]
denote the corresponding GIT quotient.

\begin{theorem}
The GIT quotients $I^m_{[n],\cM}$ thus constructed are projective over
\[
\Spec (A[n]^{G}) \cong \AA^1.
\]
\end{theorem}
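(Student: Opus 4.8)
The plan is to deduce projectivity of $I^m_{[n],\cM}$ over $\Spec(A[n]^G)$ from the standard structure of GIT quotients of projective-over-affine schemes, so the main task is to check that the hypotheses of that theory are satisfied here. First I would observe that, by the earlier results, the relative Hilbert scheme $H^m_{[n]} = \Hilb^m(X[n]/C[n])$ is projective over the base $C[n]$, which in turn is affine: $C[n] = \AA^{n+1}\times_{\AA^1}C$ is a closed subscheme of $\AA^{n+1}$ (the equation $t_1\cdots t_{n+1} = t$ with $t$ the coordinate on $C\cong\AA^1$), hence $C[n] = \Spec A[n]$. Therefore $H^m_{[n]}$ is projective over $\Spec A[n]$, i.e.\ it is of the form $\Proj$ of a finitely generated graded $A[n]$-algebra, namely $\bigoplus_{l\geq 0} H^0(H^m_{[n]},\cM_l')$ for a suitable relatively ample line bundle; combined with the $G$-linearisation on $\cM_l$ (constructed in Section~\ref{bounded and comb weights}) and the fact that $G$ is linearly reductive (a torus in characteristic zero), we are exactly in the setting where Mumford's GIT applies over an affine base.

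Next I would invoke the relative Hilbert--Mumford picture already set up: the stable locus $H^{m,s}_{[n],\cM}$ is an open $G$-invariant subscheme of $H^m_{[n]}$, and on it the $G$-action is proper (stabilisers are finite and orbits are closed in the semistable locus), so the geometric quotient $I^m_{[n],\cM} = H^{m,s}_{[n],\cM}/\!/G$ exists as a quasi-projective scheme. To get projectivity over $\Spec(A[n]^G)$ I would argue as follows. The GIT quotient of the semistable locus, $H^{m,ss}_{[n],\cM}/\!/G = \Proj\bigl(\bigoplus_{l\geq 0} H^0(H^m_{[n]},\cM_l)^G\bigr)$, is by construction projective over $\Spec\bigl(H^0(H^m_{[n]},\cM_0)^G\bigr) = \Spec\bigl(H^0(H^m_{[n]},\cO)^G\bigr)$. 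The pushforward of $\cO_{H^m_{[n]}}$ to $C[n]$ has global sections containing $A[n]$, and taking $G$-invariants and using Lemma~\ref{G-invariant section of base} (which gives $A[n]^G\cong k[t]$) identifies the base of the quotient: $H^0(H^m_{[n]},\cO)^G$ is a $k[t]$-algebra, and in fact the structure morphism $H^m_{[n]}\to C[n]$ being projective with geometrically connected... — more carefully, the morphism $I^m_{[n],\cM}\to C[n]/\!/G = \Spec A[n]^G \cong \AA^1$ is the one induced on quotients by the $G$-equivariant projective morphism $H^{m,ss}_{[n],\cM}\to C[n]$, hence is projective because projectivity descends to GIT quotients over an affine base (the relatively ample $\cM_l$ descends, after raising to a power so the $G$-action on fibres over closed orbits is trivial, to a relatively ample bundle on $I^m_{[n],\cM}$).

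Finally, since the stable locus agrees with the semistable locus up to the usual subtleties, and in fact for the line bundles $\cM$ constructed here Theorem~\ref{stability theorem} characterises stability by an open condition, the quotient $I^m_{[n],\cM} = H^{m,s}_{[n],\cM}/\!/G$ is an open subscheme of the projective-over-$\AA^1$ scheme $H^{m,ss}_{[n],\cM}/\!/G$; one then checks it is in fact closed (equivalently that $H^{m,s} = H^{m,ss}$ for these linearisations, which follows because strictly semistable points would have to have combinatorial weight zero for some nontrivial $1$-PS, contradicting the strict positivity established in Lemma~\ref{pos terms in comb sum} — so no strictly semistable points occur), giving that $I^m_{[n],\cM}$ is itself projective over $\Spec(A[n]^G)\cong\AA^1$. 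I expect the main obstacle to be the bookkeeping in this last point: verifying cleanly that the chosen linearisations have no strictly semistable locus (so that $I^m_{[n],\cM}$ is genuinely proper, not merely quasi-projective, over the base), and checking that the descended line bundle is relatively ample rather than merely relatively semiample; both are routine given the explicit weight computations above but require care.
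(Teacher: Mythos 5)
Most of your argument coincides with the route the paper takes, only spelled out: $H^m_{[n]}$ is projective over the affine base $C[n]=\Spec A[n]$, the torus $G$ is linearly reductive, $\cM_l$ is a relatively ample $G$-linearised bundle, and relative GIT (the framework of \cite{HM}, which the paper's one-line proof cites) gives that the GIT quotient is $\Proj$ of the invariant section ring, hence projective over $\Spec(A[n]^G)\cong\AA^1$, the identification of the base being Lemma \ref{G-invariant section of base}. Up to that point you and the paper agree, and your reconstruction of the mechanism is sound.

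The genuine gap is your final step, where you assert $H^{m,s}_{[n],\cM}=H^{m,ss}_{[n],\cM}$ on the grounds that a strictly semistable point would have combinatorial weight zero for some nontrivial 1-PS, ``contradicting Lemma \ref{pos terms in comb sum}.'' That lemma is not a statement about the semistable locus of a fixed linearisation: it proves strict positivity of the combinatorial weight only for the particular subscheme $Z$ satisfying the support hypothesis of Lemma \ref{line bun for pos comb weight}, and only for the line bundle tailored to that $Z$. It says nothing about other points of $H^m_{[n]}$ with respect to the same $\cM$, and the paper explicitly leaves room for them: the remark following Lemma \ref{comb over bound} notes that the positivity hypothesis fails for general linearisations, and the case $a_k=0$ in the proof of Lemma \ref{unstable} is precisely a point that is at best semistable but not stable. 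So the equality of stable and semistable loci for the linearisations of Section \ref{bounded and comb weights} is neither proved in the paper nor implied by the lemmas you cite, and without it your argument only shows that $I^m_{[n],\cM}$ is a quasi-projective open subscheme of the projective scheme $H^{m,ss}_{[n],\cM}/\!/G$. Note that the paper's own proof does not confront this either—it simply invokes \cite{HM}, which yields projectivity of the semistable quotient over $\Spec(A[n]^G)$—so if you want the stronger statement for the quotient of the stable locus, you need an argument that, for the \emph{fixed} $\cM$ in question, every semistable point is stable (or some substitute), and that is currently missing from your proposal.
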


\begin{proof}
This result follows directly from the relative Hilbert-Mumford criterion of \cite{HM}.
\end{proof}

\section{Stack perspective}\label{stack construction}

In this section, we generalise the scheme construction of Section \ref{Second_constr} and define the analogous stack of expansions and its family $\mathfrak{X}\to \mathfrak{C}$. As mentioned before, we impose additional equivalences in the stack, which have the effect of setting any two fibres with the same expanded components to be equivalent. We examine the loci of GIT stable points again on this stack and discuss their relation with stability conditions of Li and Wu and of Maulik and Ranganathan (\cite{LW}, \cite{MR}). Finally, we construct a proper Deligne-Mumford stack which we will show to be isomorphic to a choice of underlying algebraic stack obtained through the Maulik-Ranganthan construction. We use the word underlying here because what is constructed in \cite{MR} is a logarithmic algebraic stack and we impose no logarithmic structure on our space.

\subsection{Stacks and stability conditions}\label{Motivation}
Before we describe the expanded degenerations as stacks, we comment on the role of stacks in this problem and on the stability conditions defined in Section \ref{stab_conditions}.

\subsubsection*{Base change.}
Our aim is to construct degenerations of Hilbert schemes of points as good moduli spaces. In the proofs of Propositions \ref{universally closed} and \ref{separated 1}, we will use the valuative criterion to prove universal closure and separatedness. We will see that this argument holds only up to base change, which is why it is necessary for us to work with stacks instead of schemes.

\subsubsection*{Non-separated GIT quotient stacks.}
Although the GIT quotients $I^m_{[n],\cM}$ are projective and thus proper over $\AA^1$, their corresponding stack quotients are not necessarily proper. Indeed, the GIT quotient does not see the orbits of the group action themselves but the closures of these orbits. For example in Figure \ref{15}, the red pair of points and the blue pair of points are in the same orbit closure, so the GIT quotient considers them as equivalent, while the corresponding stack quotient regards them as belonging to separate orbits. This means that, in the stack, allowing for both pairs will break separatedness.

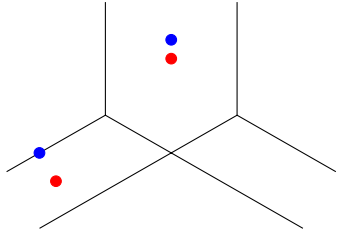
\begin{figure}
    \centering
    \begin{tikzpicture}[scale=1]
\draw   (-0.866,0.5) -- (0,0)
        (0,0) -- (0.866,0.5)
        
        (-0.866,0.5) -- (-0.866,2)
        (0.866,0.5) -- (0.866,2)
        (-1.732, -1) -- (0,0)
        (-2.165,-0.249) -- (-0.866,0.5)
        
        (1.732, -1) -- (0,0)
        
        (2.165,-0.249) -- (0.866,0.5);
        
        \filldraw[red] (-1.516,-0.375) circle (2pt) ;
        \filldraw[red] (0,1.25) circle (2pt) ;
        \filldraw[blue] (0,1.5) circle (2pt) ;
        \filldraw[blue] (-1.733,0) circle (2pt) ;
        
\end{tikzpicture}
    \caption{Non-separatedness in GIT stable locus.}
    \label{15}
\end{figure}

In the following sections, when studying quotient stacks, we will therefore want to consider the sublocus of the GIT stable stable locus containing only length $m$ zero-dimensional subschemes which are smoothly supported in a given fibre of $X[n]$. Building a compactification in which limits are represented by smoothly supported subschemes will also be useful for future applications as it allows us to break down the problem of a Hilbert scheme of $m$ points on a singular surface into products of Hilbert schemes of fewer than $m$ points on smooth components.

\subsubsection*{Patching together GIT stability conditions.} No single GIT quotient $I^m_{[n],\cM}$ contains all desired limits as smoothly supported subschemes. Therefore in the stack construction, the stability condition we define will draw on these local quotients, but globally will not correspond to one single GIT stability condition. We now define a notion that we will use in the following sections.

\begin{definition}\label{base codimension}
    We say that a fibre in some expanded degeneration $X[n]$ has \textit{base codimension} $k$ if exactly $k$ basis directions vanish at this fibre. This is independent of the value $n$.
\end{definition}

\subsubsection*{Making the expanded degenerations large enough.}
Finally, if we construct a unique GIT quotient in which not all limit subschemes are smoothly supported then the limits given by orbit closures containing only subschemes with singular support will not lie in a fibre of the expected base codimension. This gives an intuition that the degeneration we have chosen is too small. That being said, it can be useful to think about this GIT quotient if what we are trying to do is simply to resolve singularities in a way that preserves some good properties of the space, e.g.\ in the context of constructing minimal models for type III degenerations of Hilbert schemes of points on K3 surfaces.

\subsection{Expanded construction for stacks}

In this section we construct a stack of expansions $\mathfrak{C}$ and family over it $\mathfrak{X}$, keeping our notation as close as possible to that of \cite{LW}.

\subsubsection*{The stack $\mathfrak{C}$.}
In the following we define the stack $\mathfrak{C}$ identically to the stack of expanded degenerations defined by Li and Wu. For convenience, we recall the details of this construction here.

Let us consider $\AA^{n+1}$ with its natural torus action $\GG_m^{n}$ as defined above. We then impose some additional relations given by a collection of isomorphisms which we describe in the following. As before, we label elements of the base as $(t_1,\dots, t_{n+1})$. We start by defining the set
\begin{align*}
    [n+1] &\coloneqq \{1,\dots, n+1\}.
\end{align*}
Let $I\subseteq [n+1]$ and $I^\circ = [n+1]-I$ be the complement of $I$. For $|I|= r+1$, let
\[
\ind_{I}\colon [r+1] \longrightarrow  I \subset [n+1]
\]
and
\[
\ind_{I^\circ}\colon [n-r] \longrightarrow  I^\circ \subset [n+1]
\]
be the order preserving isomorphisms. Let
\[
\AA^{n+1}_{I} = \{ (t)\in \AA^{n+1} |\ t_i = 0,\ t_i\in I \} \subset \AA^{n+1}
\]
and
\[
\AA^{n+1}_{U(I)} = \{ (t)\in \AA^{n+1} |\ t_i \neq 0,\ t_i\in I^\circ \} \subset \AA^{n+1}.
\]
Then we have the isomorphism
\[
\Tilde{\tau}_I \colon (\AA^{r+1}\times \GG_m^{n-r}) \longrightarrow \AA^{n+1}_{U(I)}
\]
given by
\[
(a_1, \ldots, a_{r+1},\sigma_1,\dots, \sigma_{n-r}) \longmapsto
(t_1,\dots, t_{n+1}),
\]
where 
\begin{align*}
    & t_k = a_l, \ \mathrm{ if } \ \ind_{I}(l)=k, \\
    & t_k = \sigma_l, \ \mathrm{ if } \ \ind_{I^\circ}(l)=k.
\end{align*}
Then, given $I,I'\subset [n+1]$ such that $|I|=|I'|$, we define an isomorphism
\[
\Tilde{\tau}_{I,I'} = \Tilde{\tau}_{I} \circ \Tilde{\tau}_{I'}^{-1} \colon \AA^{n+1}_{U(I')} \longrightarrow \AA^{n+1}_{U(I)}.
\]
Recall from Section \ref{BUs} that we had natural inclusions \eqref{standard embedding1} 
\begin{align*}
    C[n] \longhookrightarrow C[n+1],
\end{align*}
which called \textit{standard embeddings} in \cite{LW}.

\medskip
Finally, we define $\mathfrak{U}^{n}$ to be the quotient $[\AA^{n+1}/\!\!\sim]$ by the equivalences generated by the $\GG_m^{n}$-action and the equivalences $\Tilde{\tau}_{I,I'}$ for pairs $I,I'$ with $|I|=|I'|$. We can define open immersions
\begin{align*}
    \mathfrak{U}^{n} \longrightarrow \mathfrak{U}^{n+1},
\end{align*}
induced by the standard embeddings. Let $\mathfrak{U} \coloneqq \lim\limits_{\to} \mathfrak{U}^{n}$ be the direct limit over $n$ and let $\mathfrak{C} \coloneqq C\times_{\AA^1} \mathfrak{U} $.

\subsubsection*{The stack $\mathfrak{X}$.}

Let $X[n]\to C[n]$ be as in Section \ref{Second_constr} and recall that $\pi\colon X[n] \to X$ is the projection to the original family. Let 
\[
\Bar{\tau}_I \colon C[m] \longhookrightarrow C[n]
\]
be the standard embedding. Then the induced family
\(
(\Bar{\tau}_I^* X[n],\Bar{\tau}_I^* \pi)
\)
is isomorphic to $(X[m],\pi)$ over $C[m]$. The equivalences on $\mathfrak{U}^{n}$ lift to $C$-isomorphisms of fibres.

\medskip
We define $\mathfrak{X}^{n}$ to be the quotient $[X[n]/\!\!\sim]$ by the equivalences generated by the $\GG_m^{n}$-action and equivalences lifted from $\mathfrak{U}^{n}$. There are natural immersions of stacks
\begin{align*}
    \mathfrak{X}^{n} \longrightarrow \mathfrak{X}^{n+1},
\end{align*}
induced by the immersions $\mathfrak{U}^{n} \to \mathfrak{U}^{n+1}$. Finally, we define $\mathfrak{X} = \lim\limits_{\to} \mathfrak{X}^{n}$ to be the direct limit over $n$. It is an Artin stack.

\subsection{Stability conditions.}\label{stab_conditions}

\subsubsection*{Restricting to the smoothly supported locus.}
We start by examining some stability conditions on the scheme $H^m_{[n]}$. We have defined several $G$-linearised line bundles $\cM$ on this space. As before, let us denote by $H^{m,ss}_{[n],\cM}$ and $H^{m,s}_{[n],\cM}$ the corresponding GIT semistable and stable loci respectively. As discussed in Section \ref{Motivation}, considering the GIT stable locus does not give us a separated quotient stack, among other reasons because it contains some subschemes which are not smoothly supported.

\begin{proposition}
    The restrictions of the loci $H^{m,ss}_{[n],\cM}$ and $H^{m,s}_{[n],\cM}$ to the loci of smoothly supported subschemes are $G$-invariant open subschemes.
\end{proposition}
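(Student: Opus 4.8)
The plan is to establish two things: first, that the smoothly supported locus inside $H^m_{[n]}$ is open, and second, that both GIT (semi)stable loci are open and $G$-invariant, so that the intersection of these with the smoothly supported locus retains all three properties. Openness and $G$-invariance of $H^{m,ss}_{[n],\cM}$ and $H^{m,s}_{[n],\cM}$ are standard consequences of GIT (or, in the relative setting here, of the relative Hilbert-Mumford criterion of \cite{HM}): the semistable and stable loci are always $G$-invariant open subschemes of the acting scheme. So the only genuine content is to show that the condition of being smoothly supported is an open condition on $H^m_{[n]}$, and that it is $G$-invariant; intersecting an open $G$-invariant subscheme with the GIT (semi)stable locus then immediately gives a $G$-invariant open subscheme.

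For $G$-invariance of the smoothly supported locus: a length $m$ subscheme $Z$ in a fibre of $X[n]$ is smoothly supported if and only if its support avoids the singular locus of that fibre, i.e.\ avoids the codimension-$1$ strata (the double curves $\pi^*(Y_i\cap Y_j)^{\mathrm{(modified)}}$ and the $\PP^1\times\PP^1$ triple loci). Since the $G$-action on $X[n]$ (Proposition \ref{group proposition}) is fibrewise over $C[n]$ up to the induced action on the base, and it preserves each irreducible component and each stratum of every fibre — the action only rescales the projective coordinates $(x_0^{(k)}:x_1^{(k)})$ and $(y_0^{(k)}:y_1^{(k)})$ and the basis directions — the locus of points of $X[n]$ lying in the smooth locus of their fibre is $G$-invariant. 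Hence the induced locus in $H^m_{[n]}$, and the induced locus in the universal family $Z^m_{[n]}$, are $G$-invariant.

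For openness: let $X[n]^{\mathrm{sm}} \subseteq X[n]$ denote the relative smooth locus of $p_n\colon X[n]\to C[n]$, equivalently the complement of the union of the codimension-$1$ strata of the fibres; this is an open subscheme, being the complement of a closed subscheme (the non-normal-crossing locus, cut out étale-locally by the equations of the $\Delta$-components and the original $Y_i$). A point $[Z]\in H^m_{[n]}$ is smoothly supported precisely when the corresponding closed subscheme $Z \subseteq X[n]$, which is finite over $C[n]$ with image a single point $b\in C[n]$, satisfies $Z \subseteq X[n]^{\mathrm{sm}}$. Equivalently, writing $Z^m_{[n]} \subseteq H^m_{[n]} \times_{C[n]} X[n]$ for the universal family with projections $p, q$, the fibre of $p$ over $[Z]$ maps into $X[n]^{\mathrm{sm}}$ under $q$. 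Now $q^{-1}(X[n]\setminus X[n]^{\mathrm{sm}})$ is closed in $Z^m_{[n]}$, and $p$ is finite (hence proper), so $p(q^{-1}(X[n]\setminus X[n]^{\mathrm{sm}}))$ is closed in $H^m_{[n]}$; its complement is exactly the smoothly supported locus. Intersecting with the open $G$-invariant subschemes $H^{m,ss}_{[n],\cM}$ and $H^{m,s}_{[n],\cM}$ gives $G$-invariant open subschemes, as claimed.

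The main obstacle — really the only point requiring care rather than bookkeeping — is verifying that the non-smooth locus of $p_n\colon X[n]\to C[n]$ is genuinely closed and that "smoothly supported'' is correctly phrased as the pullback condition $Z\subseteq X[n]^{\mathrm{sm}}$; this rests on the explicit étale-local description of $X[n]$ as a subvariety of $(X\times_{\AA^1}\AA^{n+1})\times(\PP^1)^{2n}$ cut out by the equations \eqref{BU1 eqns} and \eqref{BU2 eqns}, from which one reads off that the singular locus of each fibre is exactly the union of the pairwise intersections of its irreducible components (the bubbles and the strict transforms of the $Y_i$), a closed condition. Given the fibre descriptions already worked out in Section \ref{Second_constr}, this is routine, and the rest of the argument is the standard properness-of-finite-morphisms manipulation above.
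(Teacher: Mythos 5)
Your proof is correct, and its core step is genuinely different from the paper's. For openness of the smoothly supported locus the paper works functorially: it forms the Hilbert scheme $H^m_{[n],\sm}=\Hilb(X[n]^{\sm}/C[n])$ of the open subfamily of fibrewise smooth points and argues that the natural map $H^m_{[n],\sm}\to H^m_{[n]}$ is a monomorphism which is \'etale (deformations of smoothly supported subschemes stay smoothly supported), hence an open immersion, with a valuative-criterion argument offered as an alternative for closedness of the complement. You instead argue topologically through the universal family: the non-smoothly-supported locus is $p\bigl(q^{-1}(X[n]\setminus X[n]^{\sm})\bigr)$, which is closed because $p\colon Z^m_{[n]}\to H^m_{[n]}$ is finite (proper) and $q^{-1}$ of a closed set is closed. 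Your route avoids the deformation-theoretic claim entirely and is arguably the more standard incidence-variety argument; the paper's route gives slightly more, namely an identification of the smoothly supported locus with the Hilbert scheme of the open subfamily, which is convenient later when one wants to treat it as a moduli space in its own right. Both arguments share the same unproved-but-reasonable input, openness of $X[n]^{\sm}$ in $X[n]$, which the paper simply asserts and you justify at the same level of rigour via the \'etale-local equations; and your handling of $G$-invariance (the action permutes fibres, carrying singular loci to singular loci, while the GIT (semi)stable loci are $G$-invariant open by general theory) matches the paper's in substance, modulo the slightly loose phrase that the action ``preserves each stratum of every fibre'' when it really maps the strata of one fibre to those of its image fibre.
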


\begin{proof}
    Recall that the relative Hilbert scheme of $m$ points on $X[n] \to C[n]$, which we denoted $H^m_{[n]}$, is the scheme which represents the functor 
    \[
    h\colon \ksch^{\opp} \longrightarrow \Sets,
    \]
    where $\ksch^{\opp}$ is the category of $k$-schemes. This functor associates to any $k$-scheme $B$ the set of flat families over $B$ of subschemes of fibres of $X[n]$ over $C[n]$. Restricting $X[n]$ to the smooth locus of its fibres yields a family of open subschemes $X[n]^{\sm}$ over $C[n]$, and we can similarly define a Hilbert functor $h_{\sm}$ on this family. There is a morphism from the corresponding Hilbert scheme $H^m_{[n],\sm}\coloneqq \Hilb (X[n]^{\sm}/C[n])$ to $H^m_{[n]}$ which is clearly a monomorphism and it is étale since deformations of smoothly supported subschemes are smoothly supported. We could also note that the complement of $H^m_{[n],\sm}$ in $H^m_{[n]}$ is closed by the valuative criterion since the limit of any subscheme with part of its support in the singular locus of a fibre must also have part of its support in the singular locus of a fibre.
    
    We remark that since the smooth locus of the fibres of $X[n]$ is $G$-invariant, restricting the functor to this locus preserves the $G$-invariance. The restrictions of the semistable and stable loci to the loci of smoothly supported subschemes therefore yield $G$-invariant open subschemes.
\end{proof}

\emph{Notation.} We denote by $H^{m, s}_{[n],\cM,\sm} $ and $ H^{m, ss}_{[n],\cM,\sm}$ the loci of GIT stable and semistable subschemes which are smoothly supported.

\medskip
We have the following inclusions:
\[
H^{m, s}_{[n],\cM,\sm} \subset H^{m, ss}_{[n],\cM,\sm} \subset H^{m, ss}_{[n],\cM}.
\]

\subsubsection*{Li-Wu stability.}
We recall here the notion of stability used in \cite{LW}, in order to compare it with the GIT stability and construct an appropriate stability condition for this case.

\begin{definition}
    Let $X[n]_0$ be a fibre of $X[n]$ over a closed point and let $D$ denote the singular locus of $X[n]_0$. A subscheme $Z$ in $X[n]_0$ is said to be \emph{admissible} if the morphism 
    \[
    \cI_Z \otimes \cO_D \to \cO_D
    \]
    is injective, where $\cI_Z$ is the ideal sheaf of $Z$, i.e.\ when $Z$ is normal to $D$. A family $Z$ in $X[n]$ is \emph{admissible} if it is admissible in every fibre over a closed point. A subscheme $Z$ in $\mathfrak{X}$ is \emph{Li-Wu stable} (LW stable) if and only if it is admissible and has finite automorphism group.
\end{definition}

For a length $m$ zero-dimensional scheme $Z$ in $\mathfrak{X}$, the admissible condition will mean that none of the points in the support of $Z$ lie in the singular locus of the given fibre. The finite automorphism condition means that the stabiliser of $Z$ with respect to the torus action we defined on the blow-ups must be finite. Denote the LW stable locus by $H^m_{[n],\LW}$. Note that we have an inclusion
\begin{equation}\label{GIT stable in LW stable locus}
    H^{m, s}_{[n],\cM,\sm}\subset H^m_{[n],\LW}
\end{equation}
for all $G$-linearised line bundles $\cM$ on $H^m_{[n]}$ since, if points are GIT stable, they must have finite stabilisers. This inclusion no longer holds for the GIT semistable locus. This is a strict inclusion as the LW stability is clearly a weaker condition than the GIT strict stability with smooth support.

\subsubsection*{Modified GIT stability.} As stated above, we only want to allow length $m$ zero-dimensional subschemes to be stable if their support lies in the smooth locus of a fibre. However, restricting the GIT stability condition to this locus makes the space of stable subschemes no longer uiversally closed. Indeed, there is no single GIT condition which can represent all desired length $m$ zero-dimensional subschemes as smoothly supported subschemes. We must therefore define a modified GIT stability condition which patches together several GIT stability conditions in order to obtain the desired stable locus.

\begin{definition}
    Let $[Z]\in H^m_{[n]}$. We say that $Z$ is \textit{weakly strictly stable} (WS stable) if there exists a $G$-linearised ample line bundle on $H^m_{[n]}$ with respect to which $Z$ is stable. We denote the WS stable locus in $H^m_{[n]}$ by $H^m_{[n],\WS}$. We shall denote by $H^m_{[n],\SWS}$ the locus of WS stable smoothly supported subschemes.
\end{definition}

We may write $H^m_{[n],\SWS}$ as the union
\[H^m_{[n],\SWS} \coloneqq \bigcup_{\cM} H^{m, s}_{[n],\cM,\sm}\]
over all choices of $G$-linearised line bundle $\cM$. It is then clear that we have an inclusion $H^m_{[n],\SWS} \subset H^m_{[n],\LW}$ by the inclusion \eqref{GIT stable in LW stable locus}. We will now want to compare these stability conditions on the stack $\mathfrak{X}$, so we will need to extend our definition of WS stability to this stack.

\medskip
Given a $C$-scheme $S$, an object of $\mathfrak{X}(S)$ is a pullback family $\xi^* X[n]$ for a morphism
\[
\xi \colon S \to C[n].
\]
Now we describe WS stability on the stack $\mathfrak{X}$.

\begin{definition}
    A pair $(Z,\mathcal{X})$, where $Z$ is a family of length $m$ zero-dimensional subschemes in $\cX \in \mathfrak{X}(S)$, is said to be \emph{WS stable} if and only if $\cX \coloneqq \xi^* X[n]$ for some morphism $\xi \colon S \to C[n]$ and there exists some $G$-linearised ample line bundle on $H^m_{[n]}$ which makes $Z$ be GIT stable. We will say that $Z$ is SWS stable if it is smoothly supported and is WS stable.
\end{definition}

\begin{remark}
    Note that we are slightly abusing notation in the above definition, by asking for $Z$ to be GIT stable in $H^m_{[n]}$, when $Z$ is defined in $\cX$, and it is in fact $\xi_* Z$ which must be GIT stable in $H^m_{[n]}$. This is a harmless simplification as it will always be clear from context what we mean. We continue to use it throughout the work for convenience, especially where the map $\xi$ has not been specified.
\end{remark}

\subsubsection*{Stacks of stable objects.}
Let us denote by $\mathfrak{M}^m_{\SWS}$ and $\mathfrak{M}^m_{\LW}$ the stacks of SWS and LW stable length $m$ zero-dimensional subschemes in $\mathfrak{X}$ respectively. Let $S$ be a $C$-scheme. An object of $\mathfrak{M}^m_{\SWS}(S)$ is defined to be a pair $(Z,\cX)$, where $\cX\in \mathfrak{X}(S)$ and $Z$ is an $S$-flat SWS stable family in $\cX$. Similarly, an object of $\mathfrak{M}^n_{\LW}(S)$ is a pair $(Z,\cX)$, where $\cX\in \mathfrak{X}(S)$ and $Z$ is an $S$-flat LW stable family in $\cX$.

\begin{remark}
    Note that it does not make sense in general to speak of Maulik-Ranganathan stability (MR stability) without defining an appropriate notion of tube components on our stacks as in Section \ref{MR_section}. In this specific setting, however, we will see that there is no need to specify tube components as the stacks $\mathfrak{M}^m_{\SWS}$ and $\mathfrak{M}^m_{\LW}$ are already proper. The LW stability which we extended to our situation will therefore be equivalent to MR stability on $\mathfrak{X}$. In this setting we may therefore use both terminologies interchangeably.
\end{remark}

As is briefly discussed in Section \ref{further results}, we can also make constructions, equivalent to some of the constructions of Maulik and Ranganathan, which require choices of representatives of limit subschemes and labelling of components as tube components. As the construction we make here requires the minimal amount of choice (the only choice was in choosing to blow up $Y_1$ and $Y_2$ but not $Y_3$ at the very start) we shall refer to it as the canonical construction.

\section{The canonical moduli stack}\label{canonical moduli stack}

In this section we show that the stacks $\mathfrak{M}^m_{\SWS}$ and $\mathfrak{M}^m_{\LW}$ are proper and Deligne-Mumford and that they are in fact isomorphic.

\subsection{Properness and Deligne-Mumford property}

In this section, we show that the stacks $\mathfrak{M}^m_{\SWS}$ and $\mathfrak{M}^m_{\LW}$ are universally closed, separated and have finite automorphisms. Before we give these proofs, we make the following definition.

\begin{definition}\label{extension}
    Let $S\coloneqq \Spec R \to C$, where $R$ is some discrete valuation ring and let $\eta$ denote the generic point of $S$. Now, let $(Z,\cX)$ be a pair where $\cX\in \mathfrak{X}(S)$ and $Z$ is an $S$-flat family of length $m$ zero-dimensional subschemes in $\cX$. Let $S'\to S$ be some finite base change and denote the generic and closed points of $S'$ by $\eta'$ and $\eta'_0$ respectively. We say that a pair $(Z'_{\eta_0'},\cX'_{\eta_0'})$ is an \textit{extension} of $(Z_\eta,\cX_\eta)$ if there exists such a base change and $(Z'_{\eta_0'},\cX'_{\eta_0'})$ is the restriction to $\eta'_0$ of some $S$-flat family $(Z',\cX')$ with $\cX'\in\mathfrak{X}(S')$ such that $Z_\eta \times_{\eta} \eta' \cong Z'_{\eta'}$ and $\cX_\eta \times_{\eta} \eta' \cong \cX'_{\eta'}$.
\end{definition}

\begin{proposition}\label{universally closed}
The stack $\mathfrak{M}^m_{\SWS}$ is universally closed.
\end{proposition}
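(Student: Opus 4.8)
The plan is to verify the existence part of the valuative criterion for universal closure, working over a discrete valuation ring and allowing finite base change (which is permissible since we are proving a statement about a stack, not a scheme). So let $S = \Spec R$ with $R$ a DVR, with generic point $\eta$ and closed point $0$, let $S \to C$ be a morphism, and let $(Z_\eta, \cX_\eta)$ be an $\eta$-point of $\mathfrak{M}^m_{\SWS}$; concretely $\cX_\eta$ is a fibre of some $X[n]$ and $Z_\eta \subset \cX_\eta$ is an SWS stable length $m$ zero-dimensional subscheme. After a finite base change (replacing $S$ by $S'$ as in Definition \ref{extension}) I must produce an $S'$-flat family $(Z', \cX')$ with $\cX' \in \mathfrak{X}(S')$ extending it, such that the closed fibre $(Z'_{0}, \cX'_{0})$ is again SWS stable. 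By the definition of $\mathfrak{M}^m_{\SWS}$ and the remark identifying LW and SWS stability, it suffices to produce a limit that is smoothly supported and GIT stable with respect to \emph{some} $G$-linearised line bundle.

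The key steps, in order, are as follows. \textbf{Step 1: reduce to a single $X[n]$.} The morphism $\eta \to C[n]$ composed with $X[n]$ gives $\cX_\eta$; since $X[n] \to C[n]$ is projective (hence proper), after a finite base change on $S$ the map $\eta \to X[n]$ (recording $Z_\eta$ as a point of the relative Hilbert scheme $H^m_{[n]}$) extends to a map $S' \to H^m_{[n]}$, producing \emph{some} flat limit $Z'_0$ inside a fibre of $X[n]$. This uses properness of $H^m_{[n]} \to C[n]$ and of $X[n] \to C[n]$. \textbf{Step 2: adjust the limit by the torus action to make it smoothly supported.} The naive limit $Z'_0$ produced in Step 1 need not be smoothly supported: its support may meet the singular locus $D$ of the fibre. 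Here I exploit the fact that, under the equivalences defining $\mathfrak{X}$, the relevant base directions $t_i$ can vanish, i.e.\ I may pass to a larger $X[n']$ and to a fibre of higher base codimension, and use the $G = \GG_m^{n'}$-action (reparametrising the extension $S' \to H^m_{[n']}$ by a suitable $1$-PS) to move any point of the support off $D$ into the interior of a newly expanded $\Delta$-component. This is exactly the standard "bubbling off" argument of Li--Wu and of Gulbrandsen--Halle--Hulek, adapted to our blow-up construction; boundedness is guaranteed because $m$ is fixed, so only finitely many components ever need to be expanded, and the inclusions $X[n] \hookrightarrow X[n+k]$ of Section \ref{BUs} let us absorb all of them. \textbf{Step 3: check GIT stability of the adjusted limit.} Once the support lies entirely in the smooth loci of the fibre, and (after enough bubbling) every expanded $\Delta$-component carries at least one point of the support in the appropriate interior locus $(\Delta_1^{(k)})^\circ \cup (\Delta_2^{(n+1-k)})^\circ$, Theorem \ref{stability theorem} guarantees the existence of a $G$-linearised line bundle with respect to which this limit is GIT stable — hence it is SWS stable. \textbf{Step 4: flatness and descent to the stack.} Flatness of $Z'$ over $S'$ is automatic from the Hilbert scheme construction (the extension in Step 1 is by a morphism to $H^m_{[n']}$, which carries a flat universal family); the equivalences built into $\mathfrak{X}$ ensure the resulting pair descends to an $S'$-point of $\mathfrak{M}^m_{\SWS}$ whose closed fibre is the required extension of $(Z_\eta, \cX_\eta)$.

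The main obstacle I expect is \textbf{Step 2}: showing that the torus action genuinely suffices to remove \emph{all} bad behaviour of the support at the singular locus, in particular at the triple-point strata $\pi^*(Y_1 \cap Y_2 \cap Y_3)$ where the combinatorics of which $\Delta$-components get expanded is more delicate (cf.\ the description of fibres where three basis directions vanish, Figure \ref{124}). One must check that a single well-chosen $1$-PS, acting on a fibre of sufficiently high base codimension inside a large enough $X[n']$, simultaneously pushes every point of the (finite) support into a smooth locus, and that the resulting limit still meets every expanded component in its interior so that Theorem \ref{stability theorem} applies. The finiteness of the support and the explicit local equations \eqref{BU1 eqns}, \eqref{BU2 eqns} of the blow-ups make this a finite, if somewhat intricate, case analysis; I would organise it by base codimension ($1$, $2$, $3$) mirroring the "Description of fibres" subsection, and in each case exhibit the $1$-PS weights $(s_1,\dots,s_{n'})$ explicitly.
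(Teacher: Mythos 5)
Your overall strategy is the same as the paper's: after a finite base change, choose an expansion so that the limit subscheme is smoothly supported and every expanded $\Delta$-component meets the support, then invoke Theorem \ref{stability theorem} to get GIT (hence SWS) stability of the limit. But the proof as written has a genuine gap exactly where you flag the ``main obstacle''. In Step 1 there is no canonical ``naive limit'': since $C[n]\to C$ is not proper, the map $\eta\to C[n]$ does not extend to $S'\to C[n]$ until you have chosen how to redistribute the valuations of $(t_1,\dots,t_{n+1})$ using the torus equivalences, and that choice \emph{is} the choice of expansion. Consequently the entire content of the proposition is concentrated in your Step 2, which you do not carry out: you assert that a suitable $1$-PS reparametrisation on a large enough $X[n']$ simultaneously pushes all points of the support into smooth loci while leaving no expanded component empty, and defer this to an unspecified case analysis by base codimension. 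The paper's proof consists precisely of executing this step explicitly: after a base change making the intersection $\trop(Z_\eta)\cap\trop(X_0)$ integral, each component $Z_i$ is written as $\{(c_{i,1}u^{e_{i,1}},c_{i,2}u^{e_{i,2}},c_{i,3}u^{e_{i,3}})\}$, the values $c_{i,1}u^{e_{i,1}}$ and $(c_{i,2})^{-1}cu^{k-e_{i,2}}$ are ordered by valuation, and the tuple $(t_1,\dots,t_{n+1})=(f_1u^{g_1},\dots,f_{n+1}u^{g_{n+1}})$ is built inductively with one entry per distinct power of $u$; this single explicit formula \eqref{flat family} is what guarantees at once flatness of the extension, smooth support of the closed fibre, nonemptiness of every expanded $\Delta$-component, and agreement with $(Z_\eta,\cX_\eta)$ over $\eta'$ (because the product $t_1\cdots t_{n+1}$ is preserved up to the base change). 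Without some such explicit construction, your Steps 2--3 are a restatement of what must be proved rather than a proof.

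A second, smaller omission: the case where $\cX_\eta$ itself has positive base codimension, i.e.\ the generic fibre is already a modified special fibre and points of the support may degenerate in the $(x_0^{(i)}:x_1^{(i)})$ or $(y_0^{(i)}:y_1^{(i)})$ coordinates while $x,y,z$ stay fixed. The paper treats this separately (Proposition \ref{limits of special elements}, using the unique SWS stable associated pair of Lemma \ref{exists unique associated pair} and tropical compatibility), and your sketch, which reasons only in terms of the local coordinates $x,y,z$ and the strata of $X_0$, does not engage with it; a complete proof of universal closure needs this case as well.
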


\begin{proof}
Let $S\coloneqq \Spec R \to C$, where $R$ is some discrete valuation ring with uniformising parameter $w$ and quotient field $k$. We denote by $\eta$ and $\eta_0$ the generic and closed points of $S$ respectively. Let $(Z, \cX)$ be an $S$-flat family of length $m$ zero-dimensional subschemes such that $\cX\coloneqq \xi^* X[r]\in \mathfrak{X}(S)$ for some morphism $\xi\colon S\to C[r]$ and $(Z_\eta, \cX_\eta)\in \mathfrak{M}^m_{\SWS}(\eta)$. Additionally, we assume that all basis directions are invertible at $\xi(\eta)$, i.e.\ $\cX_\eta$ has base codimension zero. As mentioned at the end of this proof, the other case is treated in Proposition \ref{limits of special elements}. We show that there exists a finite base change $S'\coloneqq \Spec R'\to S$, for some discrete valuation ring $R'$ and a pair $(Z', \cX')\in \mathfrak{M}^m_{\SWS}(S')$ satisfying the following condition. We denote by $\eta'$ and $\eta_0'$ the generic and closed points of $S'$ respectively. Then $S'$ and $(Z', \cX')$ are chosen such that we have an equivalence $\cX'_{\eta'} \cong \cX_\eta \times_\eta \eta'$ which induces an equivalence $Z'_{\eta'} \cong Z_\eta \times_\eta \eta'$.

\medskip
Let $\mathfrak{X}(S)$ be defined by the equation $xyz= cw^h$, where $w$ is the uniformising parameter of $R$ as above and $c$ is a unit in $R$. The subscheme $Z$ is a union of irreducible components $Z_i$ whose defining equations we will want to express in terms of the uniformising parameter. We therefore start by taking an appropriate base change $S'\coloneqq \Spec R'\to S$, which maps $u^k \to w^h$, where $u$ is the uniformising parameter of $R'$ and where $u$ is chosen such that each $Z_i$ can be written locally in terms of its $x,y$ and $z$ coordinates as
\begin{equation}
    \{ (c_{i,1}u^{e_{i,1}}, c_{i,2}u^{e_{i,2}}, c_{i,3}u^{e_{i,3}})\},
\end{equation}
for some $e_{i,j}\in \ZZ$ and some units $c_{i,j}$ in $R'$. Note that $\mathfrak{X}(S')$ is defined by the equation $xyz = cu^k$ and we therefore have the equality
\[
c_{i,1}c_{i,2}c_{i,3} u^{e_{i,1}+ e_{i,2} + e_{i,3}} = cu^k
\]
for all $i$.

\medskip
\emph{Tropical perspective.} The choice of uniformising parameter $w$ corresponds to a choice of height of the triangle $\trop(X_0)$ within $\trop(X)$. We may then examine the rays in $\trop(X)$ corresponding to the image $\trop(Z_\eta)$ of $Z$ under the tropicalisation map (see Section \ref{MR_section}) for definitions. If the vertices given by $\trop(Z_\eta)\cap \trop(X_0)$ do not already lie on integral points of the cone, then we must change the height of the triangle within $\trop(X)$ until the intersection vertices $\trop(Z_\eta)\cap \trop(X_0)$ become integral. This dictates exactly what base change $S'\to S$ to make as the edge length of the new triangle is given by $e_{i,1}+ e_{i,2} + e_{i,3}$. The integral intersection vertices on this triangle corresponding to each $Z_i$ will be given by $(e_{i,1}, e_{i,2}, e_{i,3})$.

\medskip
We now form an ordered list $(d_1u^{e_1}, \dots, d_{2m}u^{e_{2m}})$, where we arrange all values $c_{i,1}u^{e_{i,1}}$ and $(c_{i,2})^{-1}c u^{k-e_{i,2}}$ from smallest to largest power of $u$. If two terms have the same power of $u$, we may place them in any order. We shall now inductively construct an element $(t_1,\dots, t_{n+1})$ of $\AA^{n+1}$ determining a morphism $\xi\colon S'\to C[n]$ such that the pullback $\xi^*X[n]$ defines the family $\cX'$. We start by setting
\[
(t_1,t_2) = (d_1u^{e_1}, (d_1u^{e_1})^{-1}cu^k).
\]
If $e_1 = e_2$, then we do not include $d_2u^{e_2}$ and move on to $e_3$. If $e_1 \neq e_2$, however, we set
\[
(t_1,t_2,t_3) = (d_1u^{e_1}, (d_1u^{e_1})^{-1} d_2u^{e_2}, (d_2u^{e_2})^{-1} cu^k).
\]
We continue to iterate this process in the following way. Assume we have $(t_1,\dots, t_j)$, where $t_j = (d_lu^{e_l})^{-1} cu^k$. Then, if $e_{l+1}\neq e_l$, we write
\[
(t_1,\dots,t_j,t_{j+1}) = (d_1u^{e_1}, \dots, (d_lu^{e_l})^{-1} d_{l+1}u^{e_{l+1}}, (d_{l+1}u^{e_{l+1}})^{-1} cu^k),
\]
 and if $e_{l+1}= e_l$, then we move on to $l+2$ without including $d_{l+1}u^{e_{l+1}}$ in the expression. We iterate this until we find
 \begin{equation}\label{base expression}
     (t_1,\dots, t_{n+1}) = (f_1u^{g_1},\dots, f_{n+1}u^{g_{n+1}})
 \end{equation}
 which has exactly one entry for each different power of $u$ in the list $(d_1u^{e_1}, \dots, d_{2m}u^{e_{2m}})$.

 \medskip
 We now denote by $\pi_n\colon C[n] \to \AA^{n+1}$ the natural projection. The morphism $\xi\colon S'\to C[n]$ is defined by the condition that
 \begin{equation}\label{flat family}
     \pi_n\circ \xi = (f_1u^{g_1},\dots, f_{n+1}u^{g_{n+1}}).
 \end{equation}
 We may then define $\cX' \coloneqq \xi^* X[n]$ and let $Z' \coloneqq Z\times_{\cX} \cX'$. We show now that this satisfies all the necessary conditions.

 \medskip
 Since $\cX \in \mathfrak{X}(S)$ is a pullback $\cX = \xi^* X[r]$ for some $r$, where $\xi \colon S \to C[r]$ is given by a similar expression to \eqref{flat family}, then we have that $\cX'_{\eta'} \cong \cX_\eta \times_\eta \eta'$. Indeed, over the generic point, the uniformising parameter is invertible and any two expressions $(t_1,\dots,t_{l})$ and $(t_1,\dots,t_{l'})$ are equivalent in $\mathfrak{C}$ up to the equivalences of this stack if they have the same product $t_1\cdots t_l = t_1\cdots t_{l'}$. But in our case this product was chosen to be identical up to the base change factor.

\medskip
 Moreover, the expression \eqref{flat family} is chosen precisely to give an $S'$-flat extension of $Z\times_{\eta}\eta'$ where all points of the support of this extension lie in the smooth locus of the fibre $\cX'_{\eta_0's}$. Finally, the expression \eqref{flat family} ensures that we have expanded out the $\Delta$-components in the fibre $\cX'_{\eta_0}$ in such a way that every expanded $\Delta$-component in this fibre contains some point of the support of $Z'$. By Theorem \ref{stability theorem}, such a configuration will be stable with respect to some GIT stability condition on $H^m_{[n]}$.

 \medskip
 The above discussion shows that if $(Z_\eta, \cX_\eta)$ is pulled back from a fibre above a point $(t_1,\dots, t_{n+1})$ in some $C[n]$ whose entries are all invertible, then $(Z_\eta, \cX_\eta)$ has an SWS stable extension. See Proposition \ref{limits of special elements} for a proof that there exists an extension if $\cX_\eta$ is a modified special fibre, i.e.\ if some of the entries of $(t_1,\dots, t_{n+1})$ are not invertible.
\end{proof}

\begin{corollary}
    The stack $\mathfrak{M}^m_{\LW}$ is universally closed.
\end{corollary}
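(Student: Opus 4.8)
The plan is to verify the valuative criterion for universal closure (allowing finite base changes), reducing to Proposition \ref{universally closed} and its counterpart for modified special fibres. Let $R$ be a discrete valuation ring with fraction field $K$, set $S=\Spec R$ with generic point $\eta$ and closed point $\eta_0$, fix a morphism $S\to C$, and suppose $(Z_\eta,\cX_\eta)\in\mathfrak{M}^m_{\LW}(\eta)$ with $\cX_\eta=\xi^*X[r]$ for some $\xi$ defined over $\eta$. We must produce, after a finite base change $S'\to S$, a family $(Z',\cX')\in\mathfrak{M}^m_{\LW}(S')$ restricting over $\eta'$ to the pullback of $(Z_\eta,\cX_\eta)$.

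The key observation is that $\mathfrak{M}^m_{\SWS}$ is a substack of $\mathfrak{M}^m_{\LW}$: indeed $H^{m,s}_{[n],\cM,\sm}\subset H^m_{[n],\LW}$ for every $\cM$ by the inclusion \eqref{GIT stable in LW stable locus}, so $H^m_{[n],\SWS}\subset H^m_{[n],\LW}$, and hence any \emph{SWS} stable extension of $(Z_\eta,\cX_\eta)$ is automatically LW stable. So it suffices to build such an extension, and I would do this by splitting into the two cases already separated in the proof of Proposition \ref{universally closed}. If all basis directions are invertible at $\xi(\eta)$, then $\cX_\eta$ is a smooth surface, so $Z_\eta$ is automatically smoothly supported and (the relevant torus being trivial) has finite automorphisms; in this regime LW and SWS stability impose no condition beyond flatness, so Proposition \ref{universally closed} applied directly to $(Z_\eta,\cX_\eta)$ produces the desired SWS stable, hence LW stable, extension. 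If instead some basis direction vanishes at $\xi(\eta)$, so that $\cX_\eta$ is a modified special fibre, one invokes Proposition \ref{limits of special elements} in the same way: since $Z_\eta$ is LW stable it is admissible, so its support lies in the smooth locus of $\cX_\eta$, and the construction of that proposition takes a smoothly supported flat family of $m$ points as input and returns an SWS stable extension, which is again LW stable.

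Combining the two cases, every $(Z_\eta,\cX_\eta)\in\mathfrak{M}^m_{\LW}(\eta)$ admits, after a finite base change, an extension lying in $\mathfrak{M}^m_{\SWS}(S')\subseteq\mathfrak{M}^m_{\LW}(S')$; this is precisely the valuative criterion, so $\mathfrak{M}^m_{\LW}$ is universally closed.

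The only delicate point, and the main obstacle, is confirming that the constructions of Propositions \ref{universally closed} and \ref{limits of special elements} depend only on the ambient expansion data and on the support of $Z_\eta$ avoiding the singular locus of $\cX_\eta$, and never secretly on $Z_\eta$ being GIT stable for some particular $\cM$ --- granting this, feeding an LW stable (rather than SWS stable) generic fibre into those constructions is legitimate. Alternatively, once the isomorphism $\mathfrak{M}^m_{\SWS}\cong\mathfrak{M}^m_{\LW}$ announced in Section \ref{further results} has been established, this corollary is immediate from Proposition \ref{universally closed}.
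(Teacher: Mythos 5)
Your argument is essentially the paper's own: the paper's proof simply notes that since every SWS stable subscheme is LW stable, the limits produced for $\mathfrak{M}^m_{\SWS}$ (via Proposition \ref{universally closed} and Proposition \ref{limits of special elements}, the latter stated for LW stable generic fibres as well) already serve as LW stable limits. Your more detailed case split and the caveat about the constructions not depending on a particular linearisation are consistent with, and just an expansion of, that one-line reduction.
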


\begin{proof}
    As every SWS stable subscheme must be LW stable, the existence of limits in $\mathfrak{M}^m_{\LW}$ follows from the existence of limits in $\mathfrak{M}^m_{\SWS}$.
\end{proof}

\begin{proposition}\label{separated 1}
    The stacks $\mathfrak{M}^m_{\SWS}$ and $\mathfrak{M}^m_{\LW}$ are separated.
\end{proposition}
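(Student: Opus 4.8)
The plan is to verify the valuative criterion. Let $R$ be a discrete valuation ring with fraction field $K$, uniformiser $u$, generic point $\eta$ and closed point $\eta_0$, and suppose we are given two objects $(Z_1,\cX_1)$, $(Z_2,\cX_2)$ of $\mathfrak{M}^m_{\SWS}(\Spec R)$ together with an isomorphism between their restrictions to $\eta$. I would show that this isomorphism extends to an isomorphism $(Z_1,\cX_1)\cong(Z_2,\cX_2)$ over $\Spec R$; the extension is then automatically unique because the relative Hilbert scheme $H^m_{[n]}\to C[n]$ is projective, hence separated, and combined with Proposition \ref{universally closed} this gives properness of $\mathfrak{M}^m_{\SWS}$. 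As in the proof of Proposition \ref{universally closed}, and for the reasons discussed in Section \ref{Motivation}, the argument proceeds up to a finite base change $\Spec R'\to\Spec R$, which is legitimate here precisely because we work with the stack: such a base change only alters the coordinates on $C[n]$ through one of the equivalences $\widetilde{\tau}_{I,I'}$ defining $\mathfrak{C}$. After such a base change I may assume both $\cX_i=\xi_i^*X[n]$ for morphisms $\xi_i\colon\Spec R\to C[n]$ written in standard coordinates $(f_1^{(i)}u^{g_1^{(i)}},\dots,f_{n+1}^{(i)}u^{g_{n+1}^{(i)}})$, and that the points where $\trop(Z_{i,\eta})$ meets $\trop(X_0)$ are integral. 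I would first treat the case where $\cX_\eta$ has base codimension zero; the modified-special-fibre case can be isolated exactly as \emph{loc.\ cit.}\ splits off Proposition \ref{limits of special elements}.

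The core of the argument is that SWS stability forces each $\cX_i$ to realise the \emph{same} subdivision of $\trop(X)$, namely the one canonically determined by the common generic tropicalisation $\trop(Z_\eta)$. On the one hand, since $Z_i$ is smoothly supported, no point of the support of the central fibre $Z_{i,\eta_0}$ may lie in the singular locus of $\cX_{i,\eta_0}$; tropically this forces every integral lattice point at which $\trop(Z_\eta)$ crosses a stratum of $\trop(X_0)$ to appear as a vertex of the subdivision underlying $\cX_i$, so this subdivision is at least as fine as the canonical one. On the other hand, WS stability together with Theorem \ref{stability theorem} forces, for every $k$, a point of the support of $Z_{i,\eta_0}$ to lie in $(\Delta_1^{(k)})^\circ\cup(\Delta_2^{(n+1-k)})^\circ$; reading this off on the expanded components shows that every expanded $\Delta$-component of $\cX_{i,\eta_0}$ contains a support point in its interior, so the subdivision has no superfluous vertices. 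Hence $\cX_1$ and $\cX_2$ carry identical subdivisions, and the generic isomorphism extends to an isomorphism $\cX_1\cong\cX_2$ over $\Spec R$ — here the equivalences defining $\mathfrak{X}$, which identify any two fibres with the same expanded components, are exactly what turn ``isomorphic subdivisions'' into an actual equivalence of families. Once $\cX_1$ and $\cX_2$ are identified compatibly with the generic isomorphism, $Z_1$ and $Z_2$ become two $R$-flat length $m$ families inside one and the same family $\cX_1\to\Spec R$ agreeing over $\eta$, whence $Z_1=Z_2$ by separatedness of $H^m_{[n]}\to C[n]$, and we are done. This also explains the failure of separatedness pictured in Figure \ref{15}: without the ``no superfluous $\Delta$-component'' clause one could allow two configurations living in different expansions but with the same generic fibre.

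For $\mathfrak{M}^m_{\LW}$ the same chain of implications applies. Admissibility is exactly the statement that the support avoids the singular locus, so LW stable subschemes are smoothly supported; and finiteness of the automorphism group is precisely what rules out an expanded $\Delta$-component with no support point in its interior, since a suitably chosen $1$-parameter subgroup scaling such a component (and its partner $\Delta_2^{(n+1-k)}$) would fix the base point and hence fix $Z$. Thus LW stability implies the hypothesis of Theorem \ref{stability theorem} just as WS stability does, so the argument goes through verbatim; this is also the point behind the isomorphism $\mathfrak{M}^m_{\LW}\cong\mathfrak{M}^m_{\SWS}$.

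I expect the main obstacle to be the middle step: showing simultaneously that stability forces the subdivision to be neither too coarse (no support in the singular locus, forced by admissibility) nor too fine (no empty expanded $\Delta$-component, forced by WS stability or by finite automorphisms), and that the resulting canonical subdivision is genuinely independent of the auxiliary choices — the integer $n$, the ordering of coincident powers of $u$, and the finite base change — so that what is produced is a true equivalence of objects of $\mathfrak{X}$ rather than merely abstractly isomorphic central fibres. The bookkeeping is cleanest when phrased through $\trop(Z_\eta)$ as above, but one must still check carefully that the $\GG_m^n$-quotient and the standard-embedding equivalences in the definition of $\mathfrak{X}$ absorb exactly the remaining ambiguity; and, as in Proposition \ref{universally closed}, the modified-special-fibre case will require its own separate treatment.
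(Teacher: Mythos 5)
Your proposal is correct and follows essentially the same route as the paper: after reducing to the base-codimension-zero case and deferring modified special fibres to Proposition \ref{limits of special elements}, you use smooth support to force every vertex of $\trop(Z_\eta)$ to appear in the expansion and stability (no empty expanded $\Delta$-component) to exclude extra vertices, so the expansion --- equivalently the tuple defining $\xi$, hence the subdivision --- is determined up to units and the stack equivalences, which is exactly the content of the paper's coordinate computation, and the limit subscheme is then pinned down as you say. The only quibble is your justification that the finite base change is absorbed by the equivalences $\widetilde{\tau}_{I,I'}$ (these merely permute the coordinates of $\AA^{n+1}$ and are unrelated to extending the discrete valuation ring); the move itself is harmless and corresponds to the paper's ``we may assume $S$ is chosen so that'' step.
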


\begin{proof}
    Let $S \coloneqq \Spec R\to C$, where $R$ is a discrete valuation ring with uniformising parameter $u$. Let $\eta$ denote the generic point of $S$ and $\eta_0$ its closed point. Now, assume that there are two pairs $(Z,\cX)$ and $(Z',\cX')$ in $\mathfrak{M}^m_{\SWS}(S)$ such that $(Z_\eta,\cX_\eta) \cong (Z'_\eta,\cX'_\eta)$. We will show that it must follow that $(Z_{\eta_0},\cX_{\eta_0}) \cong (Z'_{\eta_0},\cX'_{\eta_0})$. Similarly to the proof of Proposition \ref{universally closed}, we assume that $\cX_\eta$ has base codimension zero. The other case is treated in Proposition \ref{limits of special elements}.

    \medskip
    We may assume that $S$ is chosen so that the $i$-th irreducible component of $Z$ is given in terms of its local coordinates $x,y$ and $z$ by
    \begin{equation}
        \{ (c_{i,1}u^{e_{i,1}}, c_{i,2}u^{e_{i,2}}, c_{i,3}u^{e_{i,3}})\},
    \end{equation}
    and the $i$-th irreducible component of $Z'$ is given in terms of its local coordinates $x$, $y$ and $z$ by
    \begin{equation}
        \{ (d_{i,1}u^{f_{i,1}}, d_{i,2}u^{f_{i,2}}, d_{i,3}u^{f_{i,3}})\}.
    \end{equation}
    Since the equivalences of the stack fix $x$, $y$ and $z$ and we know that $(Z_\eta,\cX_\eta) \cong (Z'_\eta,\cX'_\eta)$, it must therefore follow that $Z$ and $Z'$ have the same number of irreducible components. Moreover, if these components are labelled in a compatible way, then $c_{i,1} = d_{i,1}$ and $e_{i,1} = f_{i,1}$ for all $i$. But now, by flatness, each $Z_i$ and $Z_i'$ component must satisfy the equations
    \begin{align}
        x &= c_{i,1}u^{e_{i,1}}, \label{x equation} \\
        y &= c_{i,2}u^{e_{i,2}}, \label{y equation} \\
        z &= c_{i,3}u^{e_{i,3}}, \label{subscheme equations}
    \end{align}
    also above the closed point. If more than one element of the set $\{ e_{i,1}, e_{i,2}, e_{i,3} \}$ is nonzero, then this implies that either $Z_i$ and $Z_i'$ are not smoothly supported or are supported in a component blown-up along the vanishings of both sides of the above components. The stability condition forces $Z_i$ and $Z_i'$ to be smoothly supported, so the latter must be true. Moreover, since in our construction we have chosen to do our blow-ups along the vanishing of $x$ and the vanishing of $y$, this implies that $Z_i$ and $Z_i'$ must be supported in a component blown up along the ideals $\langle x, cu^{e_{i,1}} \rangle$ and $\langle y, c'u^{e_{i,2}} \rangle$ over the closed point $\eta_0$, for some units $c$ and $c'$ in $R$.

    \medskip
    Note that different values of $c$ and $c'$ will cause the relevant points of the support of $Z_i$ and $Z_i'$ to take on different values in the interior of the $\PP^1$ introduced by each blow-up. Since the $\GG_m$-action imposed on the $\PP^1$ identifies all points within the interior of a $\PP^1$, this choice makes no difference.
    
    \medskip
    Notice also that blowing up along $\langle x, cu^{e_{i,1}} \rangle$ and blowing up along $\langle yz, (cu^{e_{i,1}})^{-1} du^k \rangle$, where $\mathfrak{X}(S)$ is defined by the equation $xyz = du^k$, are the same. This allows us to obtain the equation \eqref{subscheme equations}.

    \medskip
    We have established that both $Z_i$ and $Z_i'$ must be supported in the blown-up components described above for all $i$ such that more than one element of the set $\{ e_{i,1}, e_{i,2}, e_{i,3} \}$ is nonzero. We also know that by the stability conditions the pairs $(Z_{\eta_0},\cX_{\eta_0})$ and $(Z'_{\eta_0},\cX'_{\eta_0})$ cannot have an expanded component containing no point of the support. Let $\pi_n\colon C[n] \to \AA^{n+1}$ denote the natural projection, as above. It follows that the morphism
    \begin{equation}\label{unique flat family}
        \pi_n\circ \xi = (h_1u^{g_1},\dots, h_nu^{g_n})\colon S \to C[n] \to \AA^{n+1}
    \end{equation}
    defining the family $\cX = \xi^* X[n]$ is uniquely determined up to the choices of units $h_i$ in $R$ and embeddings by the standard embeddings. If the family $\cX'$ is defined by a morphism as in \eqref{unique flat family} but with different nonzero $g_i$ values, then either $Z'_{\eta_0}$ is not smoothly supported in $\cX'_{\eta_0}$ or $\cX'_{\eta_0}$ has an expanded component containing no point of the support of $Z'_{\eta_0}$. This shows uniqueness of limits.
\end{proof}

\subsubsection*{Existence and uniqueness of limits for special objects.}
We need to establish some definitions before we prove the following auxiliary result on existence and uniqueness of limits for special elements, i.e. when the fibre $\cX_\eta$ over the generic point of $S$ is a modified special fibre itself.

\medskip
Let $S\coloneqq \Spec R$ for some discrete valuation ring $R$, let $\eta$ be its generic point and take $(Z_\eta, \cX_\eta) \in \mathfrak{N}^m_{\SWS}(\eta)$ (or $\mathfrak{N}^m_{\LW}(\eta)$). Here $\eta$ is not necessarily pulled back from a point in $C[n]$ with only invertible basis directions (i.e.\ $\cX_\eta$ may be a modified special fibre). We can consider the image $\trop(Z_\eta)$ under the tropicalisation map given in Section \ref{MR_section} as a collection of rays in $\trop(X)$.

\medskip
Again, here we are abusing notation slightly: the object we are considering is actually the image of $\xi_*(Z_\eta)$ under the tropicalisation map, where $\xi\colon S \to C[n]$. Similarly, we will write $\trop(\cX_\eta)$ to mean the tropicalisation of the pushforward along $\xi$.

\medskip
In order to construct an extension $(Z'_{\eta'_0}, \cX'_{\eta'_0})$ of $(Z_\eta, \cX_\eta)$ such that $Z'_{\eta'_0}$ is smoothly supported in $\cX'_{\eta'_0}$, each ray making up $\trop(Z_\eta)$ (or, equivalently, the corresponding vertex in $\trop(X_0)$) must correspond to a nonempty bubble in $\cX'_{\eta'_0}$. This effectively determines all nonempty bubbles which must exist in $(Z'_{\eta'_0}, \cX'_{\eta'_0})$, but in order for these rays to appear as part of a polyhedral subdivision of $\trop(X)$, we might need to add more rays (or vertices in $\trop(X_0)$) corresponding to the empty bubbles in the pair $(Z'_{\eta'_0}, \cX'_{\eta'_0})$.

\begin{definition}
    In the notation of the above paragraph, we will call $(Z'_{\eta'_0}, \cX'_{\eta'_0})$ an \textit{associated pair} for a collection of rays in $\trop(X)$ (or vertices in $\trop(X_0)$) if these rays (or vertices) correspond exactly to the non-empty bubbles in $(Z'_{\eta'_0}, \cX'_{\eta'_0})$ in the manner described above.
\end{definition}

For $I\subset [n]$, we denote by $X[n]_I$ the fibres where $t_i$ vanish for all $i\in I$. Now we define the necessary condition for compatibility of limits in the stacks of stable objects.

\begin{definition}
    Let $(Z_\eta,\cX_\eta)\in \mathfrak{M}^m_{\SWS}(\eta)$ (or $\mathfrak{M}^m_{\LW}(\eta)$) be any pair over the generic point of some $S\coloneqq \Spec R$, for some discrete valuation ring $R$ as before. Moreover, let $\cX_\eta$ be the generic fibre of $\cX\coloneqq \xi^* X[n]_I$ for some nonempty set $I$, i.e.\ $\cX_\eta$ is pulled back from some modified special fibre. If, for any associated pair $(Z'_{\eta'_0}, \cX'_{\eta'_0})$ of $\trop(Z_\eta)$ in $\mathfrak{M}^m_{\SWS}$ (or $\mathfrak{M}^m_{\LW}$), the tropicalisation $\trop(\cX'_{\eta'_0})$ is a subdivision of $\trop(\cX_\eta)$, then we say that $\mathfrak{M}^m_{\SWS}$ (or $\mathfrak{M}^m_{\LW}$) is \emph{tropically compatible}.
\end{definition}

\begin{lemma}\label{exists unique associated pair}
    Let $(Z_\eta,\cX_\eta)\in \mathfrak{M}^m_{\SWS}(\eta)$ (or $\mathfrak{M}^m_{\LW}(\eta)$) be as above. Then $\trop(Z_\eta)$ has a unique associated pair which is SWS (or LW) stable.
\end{lemma}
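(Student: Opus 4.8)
The plan is to build the candidate stable associated pair explicitly from $\trop(Z_\eta)$ by the same recipe used in the proof of Proposition \ref{universally closed}, and then to single it out among all associated pairs by using the stability condition to forbid empty bubbles. First I would normalise by a finite base change $S'\to S$ exactly as in that proof: choose the uniformising parameter $u$ of $R'$ so that each point of the support of $Z_\eta$ is given in local coordinates by $(c_{i,1}u^{e_{i,1}},c_{i,2}u^{e_{i,2}},c_{i,3}u^{e_{i,3}})$ with $e_{i,j}\in\ZZ$ and $c_{i,j}$ units; tropically this says that the rays making up $\trop(Z_\eta)$ meet $\trop(X_0)$ in integral points, and since $(Z_\eta,\cX_\eta)$ is already SWS (resp.\ LW) stable each such ray already passes through a vertex of the subdivision $\trop(\cX_\eta)$. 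Ordering the coordinates $c_{i,1}u^{e_{i,1}}$ together with the $(c_{i,2})^{-1}c u^{k-e_{i,2}}$ by increasing power of $u$ and reading off a tuple $(t_1,\dots,t_{N+1})=(f_1u^{g_1},\dots,f_{N+1}u^{g_{N+1}})$ with one entry per distinct power of $u$, as in \eqref{base expression}, determines a morphism $\xi\colon S'\to C[N]$; I then set $\cX'\coloneqq \xi^*X[N]$, $Z'\coloneqq Z\times_{\cX}\cX'$, with closed fibre $(Z'_{\eta_0'},\cX'_{\eta_0'})$.

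Next I would verify that $(Z'_{\eta_0'},\cX'_{\eta_0'})$ is an SWS (resp.\ LW) stable associated pair for $\trop(Z_\eta)$. By the choice of the ordered list, every $\Delta$-component expanded out in $\cX'_{\eta_0'}$ contains a point of the support of $Z'_{\eta_0'}$, and conversely each support point lies in the smooth locus of the component corresponding to the relevant vertex of the subdivision; so the non-empty bubbles of $(Z'_{\eta_0'},\cX'_{\eta_0'})$ are exactly the rays of $\trop(Z_\eta)$, i.e.\ it is an associated pair, and it is smoothly supported. Since every expanded component meets the support, Theorem \ref{stability theorem} produces a $G$-linearised ample line bundle on $H^m_{[N]}$ making $Z'_{\eta_0'}$ GIT stable, so $(Z'_{\eta_0'},\cX'_{\eta_0'})$ is SWS stable, and a fortiori LW stable.

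For uniqueness, let $(Z''_{\eta_0''},\cX''_{\eta_0''})$ be any SWS (resp.\ LW) stable associated pair for $\trop(Z_\eta)$. Stability forbids empty bubbles: an expanded $\Delta$-component disjoint from the support of $Z''_{\eta_0''}$ yields a positive-dimensional subtorus of $G$ stabilising the pair, contradicting the finite-automorphism part of LW stability, and by Theorem \ref{stability theorem} no such configuration can be GIT stable. Hence the expanded components of $\cX''_{\eta_0''}$ coincide with its non-empty bubbles, which by the associated-pair condition are exactly the rays of $\trop(Z_\eta)$ — the same set of components as for $\cX'_{\eta_0'}$. By the rigidity of the $X[N]$-construction noted in the proof of Proposition \ref{separated 1} (the morphism $\pi_N\circ\xi\colon S'\to\AA^{N+1}$ realising a modified special fibre with a prescribed set of expanded components is uniquely determined up to the unit factors and the standard embeddings), we get $\cX''_{\eta_0''}\cong\cX'_{\eta_0'}$, and then $Z''_{\eta_0''}$ and $Z'_{\eta_0'}$ are both the flat limit of $Z_\eta$ over a common base, hence isomorphic.

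I expect the main obstacle to be the claim hidden in the first step: that the ordered-list recipe really does output a valid expansion $X[N]$ whose expanded components are in bijection with the rays of $\trop(Z_\eta)$, with no spurious empty bubble forced in. This is precisely where the restrictiveness of the canonical construction — only blow-ups of $Y_1$ and $Y_2$ along the loci $V(t_i)$, governed by the explicit equations \eqref{BU1 eqns} and \eqref{BU2 eqns} — must be used: one has to check that the subdivision cut out by the $\Delta_1$-directions and the one cut out by the $\Delta_2$-directions are mutually compatible, so that exposing every ray of $\trop(Z_\eta)$ never requires an auxiliary vertex. This is in contrast with the general framework recalled in Section \ref{MR_section}, where tube vertices are unavoidable and a separate Donaldson--Thomas stability is needed to restore uniqueness.
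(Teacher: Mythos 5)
There is a genuine gap in your existence step. The lemma is stated precisely for the situation ``as above'', i.e.\ where $\cX_\eta$ is pulled back from a modified special fibre, so the whole family lies over $0\in C$ and the local equation is $xyz=0$, not $xyz=cu^{k}$ with $c$ a unit. Consequently you cannot normalise so that every support point has coordinates $(c_{i,1}u^{e_{i,1}},c_{i,2}u^{e_{i,2}},c_{i,3}u^{e_{i,3}})$ with all $c_{i,j}$ units: a stable $Z_\eta$ is supported in the smooth locus of a modified special fibre, so any support point lying in the interior of an expanded bubble (or over a double locus) has one or more of $x,y,z$ identically zero, and the relevant degeneration data is carried by the projective coordinates $(x_0^{(i)}:x_1^{(i)})$, $(y_0^{(i)}:y_1^{(i)})$ — exactly the second case treated in Proposition \ref{limits of special elements}. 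The ordered-list recipe of Proposition \ref{universally closed}, which needs the quantities $c_{i,1}u^{e_{i,1}}$ and $(c_{i,2})^{-1}cu^{k-e_{i,2}}$, therefore cannot be run in the lemma's setting, and your parenthetical claim that stability of $(Z_\eta,\cX_\eta)$ forces every ray of $\trop(Z_\eta)$ to pass through a vertex of $\trop(\cX_\eta)$ is false in general: the first case of Proposition \ref{limits of special elements} is precisely the situation where coordinates of support points tend to zero, so that their rays hit new vertices and new bubbles must be created. There is also an expository circularity to avoid: in the paper the method of Proposition \ref{universally closed} is confined to base codimension zero, and the present lemma is the input needed for the complementary case, so a proof of the lemma cannot rest on that recipe.

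By contrast, the paper's own proof is purely combinatorial and avoids coordinates altogether: an associated pair is determined by the set of vertices at which the rays of $\trop(Z_\eta)$ meet $\trop(X_0)$, and the restrictive choice of blow-ups (only $Y_1$ and $Y_2$, along the loci $V(t_i)$) leaves exactly one way of adding edges to the triangle so that each prescribed vertex lies on the intersection of at least two edges and no empty expanded component is forced; existence and uniqueness are read off from this rigidity. Your uniqueness half — stability forbids empty bubbles (via Theorem \ref{stability theorem} for SWS and finiteness of automorphisms for LW), hence the expanded components are pinned down by the rays, and the expansion is then rigid as in Proposition \ref{separated 1} — is sound and close in spirit to this. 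To repair the existence half you should build the tuple $(t_1,\dots,t_{n+1})$ directly from the integral vertices $(e_{i,1},e_{i,2},e_{i,3})$ determined by $\trop(Z_\eta)$, rather than from unit coefficients of the support, and obtain the subscheme of the pair as in Proposition \ref{limits of special elements} rather than by the flat-limit recipe for the codimension-zero case.
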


\begin{proof}
    This is clear from the construction of $X[n]$. Given any configuration of vertices in $\trop(X_0)$, we have allowed, by our restrictive choice of blow-ups in the construction of $\mathfrak{X}$, exactly one way of adding edges to the triangle $\trop(X_0)$ such that each of these vertices land on the intersection of at least two edges and such that the corresponding extension of $(Z_\eta,\cX_\eta)$ is stable.
\end{proof}

Let $S\coloneqq \Spec R\to C$, where $R$ is a discrete valuation ring and $\eta$ is the generic point of $S$. Let $(Z_\eta,\cX_\eta)\in \mathfrak{M}^m_{\SWS}(\eta)$ (or $\mathfrak{M}^m_{\LW}(\eta)$). We have shown in the proofs of Propositions \ref{separated 1} and \ref{universally closed} that if $\cX_\eta$ is pulled back from a fibre in some $X[n]$ over a point $(t_1,\dots,t_{n+1})$ whose entries are all nonzero, then $(Z_\eta,\cX_\eta)$ has a stable extension in $\mathfrak{M}^m_{\SWS}$ (or $\mathfrak{M}^m_{\LW}$). We now prove the following statement, to complete the proofs that $\mathfrak{M}^m_{\SWS}$ and $\mathfrak{M}^m_{\LW}$ are universally closed and separated.

\begin{proposition}\label{limits of special elements}
    In the notation of the above paragraph, let $(Z_\eta,\cX_\eta)\in \mathfrak{M}^m_{\SWS}(\eta)$ (or $\mathfrak{M}^m_{\LW}(\eta)$) and assume that $\cX_\eta$ is the pullback of $X[n]_I$ over the generic point along some morphism $\xi\colon S\to C[n]_I$ and some nonempty set $I$. The fibre $\cX_\eta$ is therefore a modified special fibre. Then there exists an SWS (or LW) stable extension of $(Z_\eta,\cX_\eta)$.
\end{proposition}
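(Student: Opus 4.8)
\emph{Proof idea.} The plan is to pass to the tropical picture, identify the unique possible limit, and then realise it by a construction parallel to that of Proposition~\ref{universally closed}. By Lemma~\ref{exists unique associated pair}, the rays $\trop(Z_\eta)\subset\trop(X)$ admit a unique associated pair $(Z'_{\eta'_0},\cX'_{\eta'_0})$ which is SWS (resp.\ LW) stable; this is the only candidate for the limit, so the task is to promote it to an extension in the sense of Definition~\ref{extension}, i.e.\ to produce a finite base change $S'\to S$ and an $S'$-flat family $(Z',\cX')$ with $\cX'\in\mathfrak{X}(S')$ restricting to $(Z'_{\eta'_0},\cX'_{\eta'_0})$ over $\eta'_0$ and to $(Z_\eta,\cX_\eta)\times_\eta\eta'$ over $\eta'$.

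First I would take the base change $S'=\Spec R'\to S$ exactly as in Proposition~\ref{universally closed}, so that each component $Z_i$ of $Z$ has integral slopes and is cut out over $S'$ by $x=c_{i,1}u^{e_{i,1}}$, $y=c_{i,2}u^{e_{i,2}}$, $z=c_{i,3}u^{e_{i,3}}$. The heart of the argument is then to check that $\mathfrak{M}^m_{\SWS}$ (resp.\ $\mathfrak{M}^m_{\LW}$) is tropically compatible, i.e.\ that $\trop(\cX'_{\eta'_0})$ subdivides $\trop(\cX_\eta)$. For this I would use that $(Z_\eta,\cX_\eta)$ is stable: by Theorem~\ref{stability theorem} every expanded $\Delta$-component of the modified special fibre $\cX_\eta$ meets the support of $Z_\eta$, hence is a non-empty bubble, so every vertex of the subdivision $\trop(\cX_\eta)$ of $\trop(X_0)$ is one of the vertices imposed by $\trop(Z_\eta)$. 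Since the only bubbles the construction of $\mathfrak{X}$ permits to be expanded are the nested families $\Delta_1^{(k)}$ and $\Delta_2^{(k)}$, any system of edges added to $\trop(X_0)$ which places every ray of $\trop(Z_\eta)$ on an intersection of at least two edges and contains all the edges of $\trop(\cX_\eta)$ must refine $\trop(\cX_\eta)$; this is the rigidity underlying the uniqueness in Lemma~\ref{exists unique associated pair}, and it forces the desired subdivision relation.

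Granting tropical compatibility, I would construct the extension by repeating the recipe of Proposition~\ref{universally closed}: order the powers of $u$ appearing in the coordinates of the components of $Z$ and take successive ratios to obtain a tuple $(f_1u^{g_1},\dots,f_{N+1}u^{g_{N+1}})$, where $N$ may exceed $n$ if the associated pair requires extra components, and define $\xi'\colon S'\to C[N]$ by $\pi_N\circ\xi'=(f_1u^{g_1},\dots,f_{N+1}u^{g_{N+1}})$. Keeping the basis directions indexed by $I$ identically zero, and using --- as in Proposition~\ref{separated 1} --- that two base tuples with equal products define equivalent objects of $\mathfrak{C}$, one arranges that $\cX'\coloneqq\xi'^*X[N]$ has generic fibre $\cX'_{\eta'}\cong\cX_\eta\times_\eta\eta'$ and closed fibre $\cX'_{\eta'_0}$ equal to the expansion of the associated pair. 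Setting $Z'\coloneqq Z\times_{\cX}\cX'$, flatness of $Z'$ over $S'$ follows as in Proposition~\ref{universally closed}, $Z'_{\eta'_0}$ is smoothly supported and meets every expanded component by construction, so $(Z'_{\eta'_0},\cX'_{\eta'_0})$ is SWS (resp.\ LW) stable by Theorem~\ref{stability theorem} and is an extension of $(Z_\eta,\cX_\eta)$. The LW case also follows from the inclusion $H^m_{[n],\SWS}\subset H^m_{[n],\LW}$.

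The main obstacle I anticipate is the tropical-compatibility step: a priori the unique stable associated pair could sit over a modified special fibre whose subdivision is incomparable to that of $\cX_\eta$, in which case no flat one-parameter family inside $\mathfrak{X}$ could degenerate $\cX_\eta$ to $\cX'_{\eta'_0}$, and one would be forced, as in the Maulik--Ranganathan construction, to introduce tube components and Donaldson--Thomas stability. What makes the argument work is precisely the restrictiveness of the chosen blow-ups, which makes the poset of achievable subdivisions of $\trop(X_0)$ rigid enough that the subdivision attached to any stable pair is comparable to every coarser achievable one.
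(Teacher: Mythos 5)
Your overall strategy for the part of the statement where the degeneration is visible in the coordinates $x,y,z$ --- reduce to the unique stable associated pair of Lemma~\ref{exists unique associated pair}, argue tropical compatibility, and realise the pair as the closed fibre of a flat family --- is essentially the first half of the paper's proof. But there is a genuine gap: you only treat that case. Since $\cX_\eta$ is itself a modified special fibre, points of $Z_\eta$ may lie in the interiors of already-expanded $\Delta$-components, where some of the coordinates $x,y,z$ vanish identically along the family; the degeneration can then take place purely in the bubble coordinates $(x_0^{(i)}:x_1^{(i)})$, $(y_0^{(i)}:y_1^{(i)})$, with $x,y,z$ fixed. For such points the recipe of Proposition~\ref{universally closed} that you invoke in your third step --- writing each component as $(c_{i,1}u^{e_{i,1}},c_{i,2}u^{e_{i,2}},c_{i,3}u^{e_{i,3}})$ with the $c_{i,j}$ units and ordering the powers of $u$ --- simply does not apply, and the tuple it produces carries no information about this part of the limit, so your construction does not yield the extension. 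The paper handles this in a separate case: if all points of the support in a component $V$ move by the same one-parameter degeneration, no further expansion is permitted (it would produce an empty expanded component and destabilise), and instead one uses the $\GG_m^n$-equivalences of $\mathfrak{X}$ to choose a representative of $(Z_\eta,\cX_\eta)$ in which the varying ratios $x_0^{(i)}/x_1^{(i)}$, etc., are invertible only outside $V$, so that the family is its own (constant) limit; if two points of $V$ have genuinely different equations up to constants, they must separate into different components, and one falls back on the unique associated pair. Without an argument of this kind your proof covers only generic fibres whose degeneration is detected by $\trop(Z_\eta)$ through the $x,y,z$ valuations.

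Two smaller points. First, your justification that $\cX'_{\eta'}\cong\cX_\eta\times_\eta\eta'$ can be arranged because base tuples with equal products are identified in $\mathfrak{C}$ is only valid when all entries are invertible; over the locus where the $I$-indexed directions vanish the products are zero and the relevant identifications are the $\Tilde{\tau}_{I,I'}$ together with the torus action. The paper instead extracts this identification from tropical compatibility itself: every vertex of $\trop(\cX_\eta)$ is a vertex of $\trop(\cX'_{\eta'_0})$, so $\cX'_{\eta'_0}$ is an iterated blow-up of $\cX_\eta$, which is what makes the associated pair the closed fibre of an $S'$-flat family with the prescribed generic fibre. Second, Theorem~\ref{stability theorem} only requires support in the union $(\Delta_1^{(k)})^\circ\cup(\Delta_2^{(n+1-k)})^\circ$ for each $k$, so individual bubbles of $\cX_\eta$ may well be empty; your intermediate claim that every vertex of $\trop(\cX_\eta)$ is imposed by $\trop(Z_\eta)$ needs this correction, although the conclusion survives because the construction only ever expands partner components together.
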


\begin{proof}
We split the proof into the following two cases. The first case is where a point $P$ of the support of $Z_\eta$ has one or more of its local coordinates $x,y$ or $z$ tending to zero. The second case is where a point $P$ of the support of $Z_\eta$ has fixed $x,y$ and $z$ values but one or more of its $(x_0^{(i)}:x_1^{(i)})$ or $(y_0^{(i)}:y_1^{(i)})$ coordinates tends towards $(1:0)$ or $(0:1)$.

\medskip
We start by proving existence and uniqueness of limits in the first case using the valuative criterion. Let $V$ denote the irreducible component of $\cX_\eta$ in the interior of which $P$ lies. Notice that since $P$ tends towards a codimension greater or equal to one stratum of $\cX$, then in order for its limit to be smoothly supported in an extension of $(Z_\eta,\cX_\eta)$, it will be necessary to expand out at least one $\Delta$-component in this extension. There exists a smoothing from the interior of $V$ in the fibre over the generic point to the interior of this expanded $\Delta$-component in such an extension of $(Z_\eta,\cX_\eta)$ if and only if this $\Delta$-component is equal to $V$ in the fibre over the generic point. Moreover, if there is no such $\Delta$-component equal to $V$, then none of the $x,y$ or $z$ coordinates can tend towards zero (because both sides of the defining equations must tend towards zero).

\medskip
By Lemma \ref{exists unique associated pair}, we know that there exists a unique SWS (or LW) stable associated pair $(Z'_{\eta'_0},\cX'_{\eta'_0})$ for the image $\trop(Z_\eta)$ of $Z_\eta$ in $\trop(X)$. Precisely, this means that there exists some base change $S'\to S$ and some SWS (or LW) stable pair $(Z'_{\eta'_0},\cX'_{\eta'_0})$ over the closed point $\eta'_0$ of $S'$ such that the non-empty bubbles of $\cX'_{\eta'_0}$ correspond exactly to the rays in $\trop(Z_\eta)$.

We must now show that the equivalences $\cX'_{\eta'} \cong \cX_\eta \times_\eta \eta'$ and $Z'_{\eta'} \cong Z_\eta \times_\eta \eta'$ hold.
But this follows from the fact that $\mathfrak{M}^m_{\SWS}$ and $\mathfrak{M}^m_{\LW}$ are tropically compatible by construction. Indeed, any modified special fibre in $\mathfrak{X}$ can be obtained from any modified fibre of lower base codimension by a sequence of blow-ups. Every vertex in $\trop(\cX_\eta)$ is therefore a vertex in $\trop(\cX'_{\eta'_0})$, i.e.\ $\cX'_{\eta'_0}$ can be seen as a blow-up of $\cX_\eta$. This tells us that $(Z'_{\eta'_0},\cX'_{\eta'_0})$ can be seen as the restriction to the closed point $\eta'_0$ of an $S'$-flat family $(Z',\cX')\in \mathfrak{M}^m_{\SWS}(S')$ (or $\mathfrak{M}^m_{\LW}(S')$) such that $\cX'_{\eta'} \cong \cX_\eta \times_\eta \eta'$ and $Z'_{\eta'} \cong Z_\eta \times_\eta \eta'$.

\medskip
Now, let us dicuss the second case. Denote again by $V$ the irreducible component of $\cX_\eta$ in the interior of which $P$ lies. Firstly, let us assume that any other point of the support of $Z_\eta$ lying in the interior of $V$ shares the same equations as $P$ up to multiplication by a constant. In particular, for all these points the same coordinates $(x_0^{(i)}:x_1^{(i)})$ or $(y_0^{(i)}:y_1^{(i)})$ will be fixed and the same will vary (recall that in this case we are assuming already that all $x,y$ and $z$ coordinates are fixed). By flatness, it therefore follows that all points of the support of $Z_\eta$ which lie in the interior of $V$ will tend to the interior of the same irreducible component in any extension of $(Z_\eta,\cX_\eta)$. Any extension in which an additional $\Delta$-component is expanded out, i.e.\ in which an additional basis direction is set to zero, cannot be stable, since it would necessarily have an empty expanded component which would destabilise the pair.

\medskip
Notice that any $(x_0^{(i)}:x_1^{(i)})$ or $(y_0^{(i)}:y_1^{(i)})$ which are fixed for one of these points contained in the interior of $V\subset\cX_\eta$ must be fixed for all of them. We may therefore choose a representative $(Z'_\eta,\cX'_\eta)$ in the same equivalence class as $(Z_\eta,\cX_\eta)$ such that the coordinates $x_0^{(i)}/x_1^{(i)}$, $x_1^{(i)}/x_0^{(i)}$, $y_0^{(i)}/y_1^{(i)}$ or $y_1^{(i)}/y_0^{(i)}$ which are allowed to vary in $Z$ are invertible only outside of $V\subset\cX'_\eta$. Then the limit of $(Z'_\eta,\cX'_\eta)$ is $(Z'_\eta,\cX'_\eta)$ itself. And by the above this equivalence class gives us the only stable limit for such a family.

\medskip
Now, if two points $P_0$ and $P_1$ of the support of $Z_\eta$ which lie in $V\subset \cX_\eta$ have different defining equations up to multiplication by a constant, it means that there are some coordinates $(x_0^{(i)}:x_1^{(i)})$ or $(y_0^{(i)}:y_1^{(i)})$ which are fixed for $P_0$ but not $P_1$ and vice versa. This means that, as one or the other of these coordinates tends toward $(1:0)$ or $(0:1)$, the points $P_0$ and $P_1$ must tend towards different irreducible components. Similarly to the first case, by Lemma \ref{exists unique associated pair}, we know that there exists a unique associated pair $(Z'_{\eta'_0},\cX'_{\eta'_0})$ for $\trop(Z_\eta)$ and by the tropical compatibility condition shown above, we know that every vertex in $\trop(\cX_\eta)$ is a vertex in $\trop(\cX'_{\eta'_0})$. As before this shows that $(Z'_{\eta'_0},\cX'_{\eta'_0})$ is the unique SWS (or LW) stable extension of $(Z_\eta,\cX_\eta)$.

\medskip
If $V\subset \cX_\eta$ contains more than two points of the support which have different equations up to multiplication by a constant, we can just repeat the steps of the previous paragraph until we find a stable extension. It will be unique as it will be the unique pair associated to $\trop(Z_\eta)$.
\end{proof}

\subsubsection*{Deligne-Mumford property.} Finally we show that both stacks of stable objects constructed have finite automorphisms.
\begin{proposition}\label{finite autom}
    The stacks $\mathfrak{M}^m_{\LW}$ and $\mathfrak{M}^m_{\SWS}$ have finite automorphisms.
\end{proposition}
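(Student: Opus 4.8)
The plan is to identify, for each geometric point of the base, the automorphism group of the corresponding object with a subgroup of the $G$-stabiliser of a point of the relative Hilbert scheme $H^m_{[n]}$, and then use finiteness of these stabilisers. Since the stacks in question are algebraic and we work in characteristic zero, it suffices to prove that for every algebraically closed extension $k'$ of $k$ and every object $(Z,\cX)\in\mathfrak{M}^m_{\SWS}(\Spec k')$ (resp.\ $\mathfrak{M}^m_{\LW}(\Spec k')$) the automorphism group $\Aut(Z,\cX)$ is finite.

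First I would unwind the definitions. Write $\cX=\xi^*X[n]$ for a morphism $\xi\colon\Spec k'\to C[n]$ with image $t\in C[n]$. By construction of $\mathfrak{X}$ as the quotient $[X[n]/\!\!\sim]$ by the $\GG_m^n$-action together with the standard-embedding equivalences $\Tilde\tau_{I,I'}$, every automorphism of $\cX$ in $\mathfrak{X}$ is induced by an element of the stabiliser $\Stab_G(t)\subseteq G$: the equivalences $\Tilde\tau_{I,I'}$ only identify the $\mathfrak{X}^n$ for different $n$ and, on a fixed representative, reduce to the identity, so they produce no extra automorphisms of a fixed fibre. Hence an automorphism of the pair $(Z,\cX)$ is an element $g\in\Stab_G(t)$ fixing $Z$; writing $[Z]\in H^m_{[n]}$ for the point of the relative Hilbert scheme determined by $\xi_*Z$, this gives an inclusion $\Aut(Z,\cX)\hookrightarrow \Stab_G([Z])$, the $G$-stabiliser of $[Z]$. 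The single delicate point is precisely this identification, i.e.\ checking that the quotient-stack and direct-limit bookkeeping of $\mathfrak{X}$ introduces no spurious automorphisms of an object; this is carried out exactly as in \cite{LW}.

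For $\mathfrak{M}^m_{\LW}$ there is then nothing more to prove: finiteness of $\Stab_G([Z])$ is part of the definition of Li--Wu stability. For $\mathfrak{M}^m_{\SWS}$, if $(Z,\cX)$ is SWS stable then $[Z]$ lies in $H^{m,s}_{[n],\cM,\sm}$ for some $G$-linearised ample line bundle $\cM$ on $H^m_{[n]}$, i.e.\ $[Z]$ is GIT stable. Were $\Stab_G([Z])$ to contain a nontrivial one-parameter subgroup $\lambda$, then $\lambda$ would fix $[Z]$, so $\mu^{\cM}(\lambda,[Z])=-\mu^{\cM}(\lambda^{-1},[Z])$ and one of these Hilbert--Mumford invariants would be nonpositive, contradicting GIT stability. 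Thus $\Stab_G([Z])$ contains no nontrivial $1$-PS, and being a closed subgroup of the torus $G$ it is finite. Alternatively, one may simply invoke the inclusion $H^{m,s}_{[n],\cM,\sm}\subset H^m_{[n],\LW}$ of Section~\ref{stab_conditions}, which reduces the SWS case to the LW case.

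I expect the main obstacle to be the bookkeeping of the second paragraph: making precise, uniformly in the base scheme and not only over geometric points, that automorphisms of an object of $\mathfrak{M}$ are exactly the elements of the torus stabiliser that fix $Z$, so that the relative automorphism group scheme is unramified and quasi-finite over the base and hence the diagonal of $\mathfrak{M}$ is unramified. Once this is in place the finiteness assertion itself is immediate: for LW stability it is definitional, and for SWS stability it follows from the standard fact that GIT-stable points have finite stabilisers under a reductive group.
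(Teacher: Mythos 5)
Your proof is correct and follows essentially the same route as the paper: finiteness is definitional for LW stability, the SWS case reduces to it via the inclusion of the GIT-stable smoothly supported locus into the LW-stable locus (or directly from the fact that GIT-stable points have finite $G$-stabilisers), and one checks that the extra equivalences $\Tilde{\tau}_{I,I'}$ contribute no automorphisms. One small slip: the $\Tilde{\tau}_{I,I'}$ do not identify the $\mathfrak{X}^n$ for different $n$ (that is the role of the standard embeddings); rather they identify loci within a fixed $\AA^{n+1}$ by permuting the positions of the zero entries of a tuple, and the paper's observation is that such a map fixes a tuple only when it is the identity, which gives the same conclusion you assert.
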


\begin{proof}
    On the stack $\mathfrak{M}^m_{\LW}$ this is immediate from the definition of LW stability. Since the SWS stable locus is a subset of the LW stable locus, it follows that $\mathfrak{M}^m_{\SWS}$ must also have finite automorphisms. Alternatively, one can recall that a GIT stable point must have finite stabiliser with respect to the relevant $G$-action.

    \medskip
    Note that any equivalence on $\mathfrak{X}$ lifted from an isomorphism $\Tilde{\tau_{I,I'}}$ does not fix any object unless $\Tilde{\tau_{I,I'}}$ is the identity map. This is clear from the fact that $\Tilde{\tau_{I,I'}}$ acts on a tuple in $\AA^{n+1}$ by changing the position of its zero entries while preserving the relative order of its nonzero entries. The only way to fix a tuple is to leave its zero entries in their original position, but any map $\Tilde{\tau_{I,I'}}$ which does this is just the identity map.
\end{proof}

\begin{corollary}
    The stacks $\mathfrak{M}^m_{\LW}$ and $\mathfrak{M}^m_{\SWS}$ are Deligne-Mumford and proper.
\end{corollary}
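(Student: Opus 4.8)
The plan is to assemble the corollary from the propositions already proved, supplemented by two facts that have not yet been spelled out: that $\mathfrak{M}^m_{\LW}$ and $\mathfrak{M}^m_{\SWS}$ are algebraic stacks of finite type over $C$, and that the valuative criteria verified over discrete valuation rings (after finite base change) suffice to conclude universal closedness and separatedness in the form needed for properness.

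First I would record that both stacks are algebraic. The stack $\mathfrak{X}$ is Artin, the relative Hilbert functor of length $m$ zero-dimensional subschemes of $\mathfrak{X}$ over $\mathfrak{C}$ is representable by an algebraic stack, and LW and SWS stability each cut out an open substack: admissibility and finiteness of automorphisms are open conditions, and for each $\cM$ the locus $H^{m,s}_{[n],\cM,\sm}$ is open in $H^m_{[n]}$, so the union $H^m_{[n],\SWS}=\bigcup_{\cM}H^{m,s}_{[n],\cM,\sm}$ is open as well. For finiteness of type, the key observation — exactly as in \cite{LW} — is that a length $m$ zero-dimensional subscheme can meet at most $m$ of the expanded bubbles, so any SWS or LW stable pair is pulled back from $X[n]$ for $n$ bounded in terms of $m$; hence only finitely many of the $\mathfrak{X}^{n}$ contribute, the direct limit $\mathfrak{M}^m = \lim_n \mathfrak{M}^m_{[n]}$ stabilises, and the stacks are quasi-compact and of finite type over $C$.

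For the Deligne-Mumford property, since the base field has characteristic zero, an algebraic stack locally of finite type over it whose geometric automorphism groups are all finite is Deligne-Mumford: finite group schemes in characteristic zero are étale, so the diagonal is unramified and an étale atlas exists. This is supplied by Proposition \ref{finite autom}. For properness over $C$ — which means separated, quasi-compact and universally closed — separatedness is Proposition \ref{separated 1}, completed in the modified-special-fibre case by Proposition \ref{limits of special elements}, and universal closedness is Proposition \ref{universally closed} together with its corollary, again completed by Proposition \ref{limits of special elements}. Because both stacks are of finite type over the Noetherian base $C$, it is enough to check the valuative criterion for discrete valuation rings, and the statements of those propositions — which permit a finite base change of the DVR — are precisely the valuative criterion for algebraic stacks.

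The one step requiring genuine care, and the main obstacle, is the boundedness/quasi-compactness claim: one must verify that the expansions occurring in stable families really are bounded, i.e.\ that no stable pair requires more blow-ups than the $m$ points can ``see'', so that the colimit over $n$ is attained at a finite stage. Granting this boundedness, all remaining ingredients are the cited propositions, and the corollary follows by combining them.
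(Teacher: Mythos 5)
Your proposal is correct and takes essentially the same route as the paper, whose own proof is just a one-line assembly of Propositions \ref{universally closed}, \ref{separated 1}, \ref{limits of special elements} and \ref{finite autom} (plus the corollary on universal closedness of $\mathfrak{M}^m_{\LW}$). The extra material you supply --- algebraicity and openness of the stable loci, boundedness of the expansions in terms of $m$ so the colimit over $n$ stabilises, the characteristic-zero criterion that finite automorphisms imply Deligne--Mumford, and the sufficiency of the DVR valuative criteria with finite base change --- consists of standard facts the paper leaves implicit, and you are right that the boundedness statement is the one ingredient not explicitly recorded in the text.
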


\begin{proof}
    This follows directly from the results of this section.
\end{proof}

\subsection{An isomorphism of stacks}\label{isom of stacks}

We shall now show that the stacks $\mathfrak{M}^m_{\SWS}$ and $\mathfrak{M}^m_{\LW}$ are isomorphic. The following lemma is a standard result, quoted from \cite{GHH}.

\begin{lemma}\label{equiv_stacks}
Let $\mathfrak{W}$ and $\mathfrak{Y}$ be Deligne-Mumford stacks of finite type over an algebraically closed field $k$, and let
\[
f \colon \mathfrak{W} \to \mathfrak{Y}
\]
be a representable étale morphism of finite type. Let $|\mathfrak{W}(k)|$ denote the set of equivalence classes of objects in $\mathfrak{W}(k)$ and similarly for $|\mathfrak{Y}(k)|$.

\medskip
Assume that $|f| \colon |\mathfrak{W}(k)| \to |\mathfrak{Y}(k)|$ is bijective and that for every $x\in \mathfrak{W}(k)$, $f$ induces an isomorphism
 \(\Aut_{\mathfrak{W}}(x) \to \Aut_{\mathfrak{Y}}(f(x)). \)
Then $f$ is an isomorphism of stacks.
\end{lemma}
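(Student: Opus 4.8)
The plan is to deduce this from two standard facts about étale morphisms: a representable étale monomorphism is an open immersion, and a surjective open immersion is an isomorphism. So I would reduce the statement to showing that $f$ is (i) a monomorphism and (ii) surjective.

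For (i) I would use that $f$ is a monomorphism if and only if its diagonal $\Delta_f\colon\mathfrak{W}\to\mathfrak{W}\times_{\mathfrak{Y}}\mathfrak{W}$ is an isomorphism. Since $f$ is étale, hence unramified, $\Delta_f$ is already an open immersion, so it remains only to check that $\Delta_f$ is surjective; as everything in sight is a Deligne--Mumford stack of finite type over the algebraically closed field $k$, it suffices to verify surjectivity on $k$-points, an open substack containing every closed point being the whole stack. A $k$-point of $\mathfrak{W}\times_{\mathfrak{Y}}\mathfrak{W}$ amounts to a triple $(x_1,x_2,\phi)$ with $x_i\in\mathfrak{W}(k)$ and $\phi\colon f(x_1)\xrightarrow{\ \sim\ }f(x_2)$; since $|f|$ is injective, $x_1$ and $x_2$ lie in the same isomorphism class, so I would choose some $\psi\colon x_1\xrightarrow{\ \sim\ }x_2$, note that $\phi\circ f(\psi)^{-1}\in\Aut_{\mathfrak{Y}}(f(x_2))$, and use the surjectivity of $\Aut_{\mathfrak{W}}(x_2)\to\Aut_{\mathfrak{Y}}(f(x_2))$ to lift it to some $\alpha\in\Aut_{\mathfrak{W}}(x_2)$; then $\alpha\circ\psi$ is an isomorphism $x_1\xrightarrow{\ \sim\ }x_2$ with $f(\alpha\circ\psi)=\phi$, which exhibits $(x_1,x_2,\phi)$ in the image of $\Delta_f$. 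Hence $\Delta_f$ is a surjective open immersion, so an isomorphism, and $f$ is a monomorphism.

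For (ii), having that $f$ is representable, étale and a monomorphism, I would invoke the fact, checkable after base change to a scheme where it is the classical statement that an étale monomorphism (equivalently, an étale universally injective morphism) is an open immersion, that $f$ is an open immersion; this identifies $\mathfrak{W}$ with an open substack $\mathfrak{U}\subseteq\mathfrak{Y}$. Since $|f|$ is surjective, $\mathfrak{U}$ contains every $k$-point of $\mathfrak{Y}$, so its reduced complement is a closed substack of a finite-type Deligne--Mumford stack over the algebraically closed field $k$ with no $k$-points, hence empty by the Nullstellensatz. Therefore $\mathfrak{U}=\mathfrak{Y}$ and $f$ is an isomorphism.

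I expect the only genuine content to be the surjectivity of $\Delta_f$: this is where both hypotheses are actually used, the bijectivity of $|f|$ to obtain a set-theoretic match of points and, crucially, the surjectivity of the maps on automorphism groups to upgrade it to an honest isomorphism of objects compatible with the prescribed glueing datum $\phi$. Everything else is the routine reduction "an open immersion of finite-type Deligne--Mumford stacks over $\bar k$ that is surjective on $k$-points is surjective", which I use twice and which rests on quasi-compactness together with the Nullstellensatz.
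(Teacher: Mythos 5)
Your argument is correct, and it is worth noting that the paper itself gives no proof of this lemma: it is quoted as a standard result from \cite{GHH}, so there is nothing internal to compare against. Your write-up is a sound self-contained verification along the expected lines: since $f$ is representable and \'etale, the relative diagonal $\Delta_f$ is an open immersion, and your lifting argument (choose $\psi\colon x_1\xrightarrow{\sim}x_2$ using injectivity of $|f|$, correct it by a lift of $\phi\circ f(\psi)^{-1}$ using surjectivity of $\Aut_{\mathfrak{W}}(x_2)\to\Aut_{\mathfrak{Y}}(f(x_2))$) does show that every $k$-point of $\mathfrak{W}\times_{\mathfrak{Y}}\mathfrak{W}$ lies in the essential image of $\Delta_f$; combined with the Nullstellensatz-type density of $k$-points in a finite-type stack over $\bar k=k$, this makes $\Delta_f$ an isomorphism, so $f$ is a monomorphism, hence (being flat and locally of finite presentation) an open immersion, and surjectivity of $|f|$ finishes the job. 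Two small observations: injectivity of the maps on automorphism groups is automatic from representability of $f$ (trivial relative inertia), so the genuine input from the hypothesis is, as you say, the surjectivity; and in the diagonal step one should note that $\mathfrak{W}\times_{\mathfrak{Y}}\mathfrak{W}$ is again Deligne--Mumford of finite type over $k$ (base change of $f$ followed by $\mathfrak{W}\to\Spec k$), which is what licenses checking surjectivity of the open immersion on $k$-points --- you use this implicitly and it does hold here.
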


We may construct such a map $f\colon \mathfrak{M}^m_{\SWS} \to \mathfrak{M}^m_{\LW}$, which we will show to have the required properties, in the following way. First recall from above that we have an inclusion $H^m_{[n],\SWS} \subset H^m_{[n],\LW}$. Therefore the natural morphism $H^m_{[n],\LW} \to \mathfrak{M}^m_{\LW}$ restricts to give a morphism $H^m_{[n],\SWS}\to \mathfrak{M}^m_{\LW}$. This morphism is equivariant under the group action so must factor through the morphism
\[
f\colon \mathfrak{M}^m_{\SWS} \longrightarrow \mathfrak{M}^m_{\LW}.
\]

\begin{lemma}\label{bijection of stacks}
The function $|f|\colon |\mathfrak{M}^m_{\SWS}(k)| \to |\mathfrak{M}^m_{\LW}(k)|$ induced by $f$ is a bijection.
\end{lemma}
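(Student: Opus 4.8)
The plan is to verify the two conditions of Lemma~\ref{equiv_stacks} for the morphism $f\colon \mathfrak{M}^m_{\SWS} \to \mathfrak{M}^m_{\LW}$: that $|f|$ is bijective on $k$-points (this lemma) and that $f$ induces isomorphisms on automorphism groups (the next lemma, not in this excerpt). Here I focus on the bijectivity claim. Since $\mathfrak{M}^m_{\SWS}$ is by construction a substack of $\mathfrak{M}^m_{\LW}$ (we have the inclusion $H^m_{[n],\SWS} \subset H^m_{[n],\LW}$, which descends to $f$), the map $|f|$ is automatically \emph{injective}: two SWS stable pairs that become equivalent in $\mathfrak{M}^m_{\LW}$ are equivalent already as SWS stable pairs, because the equivalence relations imposed on $\mathfrak{X}$ (the torus action and the $\Tilde\tau_{I,I'}$) are the same in both stacks and the property of being smoothly supported and GIT-stable-for-some-$\cM$ is preserved under these equivalences. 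So the content of the lemma is \emph{surjectivity}: every LW stable pair $(Z,\cX)$ is equivalent, in $\mathfrak{X}$, to an SWS stable one.

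First I would reduce to a pointwise statement: given $(Z,\cX) \in \mathfrak{M}^m_{\LW}(k)$, with $\cX$ a fibre of $X[n]$ over some point of $C[n]$, I must produce an equivalent pair that is additionally GIT stable for some choice of $G$-linearised line bundle $\cM$. By Definition of LW stability, $Z$ is admissible, so no point of its support lies in the singular locus of $\cX$ — in particular $Z$ is smoothly supported, and it has finite stabiliser. The only thing that could fail is the GIT stability condition: by Theorem~\ref{stability theorem}, a subscheme is GIT stable for some $\cM$ precisely when every $k$ has a point of the support in $(\Delta_1^{(k)})^\circ \cup (\Delta_2^{(n+1-k)})^\circ$. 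So the task is: starting from an admissible $Z$, pass to an equivalent representative (via the stack equivalences, i.e.\ possibly contracting or re-indexing components, using the standard embeddings and the $\Tilde\tau_{I,I'}$) so that this combinatorial condition holds. The key observation is that if some expanded $\Delta$-component of $\cX$ contains no point of the support of $Z$, then that component is "superfluous" and can be contracted within the equivalence class of $\mathfrak{X}$: the standard embeddings $X[n] \hookrightarrow X[n+k]$ together with the isomorphisms identifying fibres with the same expanded components mean that a pair $(Z,\cX)$ where some $\Delta^{(k)}$ is empty is equivalent to the pair where that $t_k$ has been set equal to a neighbouring basis direction (removing that component). Iterating, one arrives at a representative in which every expanded $\Delta$-component meets the support; and the remaining (non-expanded) $\Delta$-components automatically satisfy the condition of Theorem~\ref{stability theorem} because "equal to $Y_1$" or "equal to $Y_2 \cup Y_3$" forces a support point onto the appropriate side. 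Admissibility ensures the support sits in the smooth locus throughout, so the representative produced is SWS stable.

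Concretely, I would organise the argument as follows. (1) Recall that an object of $\mathfrak{M}^m_{\LW}(k)$ is a pair $(Z,\cX)$ with $Z$ admissible and finite-stabiliser; note $Z$ is smoothly supported. (2) Characterise the equivalence class of $(Z,\cX)$ in $\mathfrak{X}$: two pairs are equivalent iff they agree after forgetting empty $\Delta$-components and matching the non-empty ones (this is exactly the point of the quotient $[X[n]/\!\!\sim]$ — identifying fibres with the same expanded components). (3) Within this class choose the representative $(Z_0,\cX_0)$ that is \emph{minimal}, i.e.\ has no empty expanded $\Delta$-component. (4) Check that $\cX_0$ then satisfies the hypothesis of Theorem~\ref{stability theorem}: for each index $k$, the component $\Delta_1^{(k)}$ (or $\Delta_2^{(n+1-k)}$) is either expanded and non-empty — done — or not expanded, in which case it is "equal to" $Y_1$ or to $Y_2\cup Y_3$, and the location of the (smoothly supported) support relative to it forces a support point on the required side of the corresponding $\PP^1$, so the open-stratum condition holds there too. (5) Conclude by Theorem~\ref{stability theorem} that $Z_0$ is GIT stable for some $\cM$, hence SWS stable, hence $(Z_0,\cX_0)$ maps to $(Z,\cX)$ under $f$, giving surjectivity. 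Combined with injectivity, $|f|$ is a bijection.

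I expect the \textbf{main obstacle} to be step (4): one must be careful that after minimising, the combinatorial stability criterion of Theorem~\ref{stability theorem} is genuinely met at \emph{every} index $k$, including those where the $\Delta$-component is not expanded and the corresponding $s_k$ is forced to be zero or one-sidedly bounded by \eqref{boundedness}. The subtle case is when the support of $Z$ avoids an open stratum $(\Delta_1^{(k)})^\circ$ but the component has been expanded out anyway for reasons coming from a different part of the fibre (e.g.\ the $\pi^*((Y_1\cap Y_2)^\circ)$ locus, where components are of mixed type). Here one uses that $\Delta_1^{(k)} = \Delta_2^{(n+1-k)}$ in the $Y_1\cap Y_2$ locus, so a point on that shared component does lie in $(\Delta_1^{(k)})^\circ \cup (\Delta_2^{(n+1-k)})^\circ$; and if even the mixed-type component is empty it can be contracted, contradicting minimality. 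Making this case analysis airtight — matching "minimal representative" with "satisfies Theorem~\ref{stability theorem}" — is where the real work lies; the rest is bookkeeping with the stack equivalences.
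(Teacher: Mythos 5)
Your overall skeleton matches the paper's: injectivity comes for free from the inclusion $H^m_{[n],\SWS}\subset H^m_{[n],\LW}$, and surjectivity is to be proved by replacing an LW stable pair, within its equivalence class in $\mathfrak{X}$, by a representative satisfying the combinatorial criterion of Theorem \ref{stability theorem}. But the execution has a genuine gap at exactly the step you flag as delicate, and it is not where you think it is. Your ``minimal representative'' is defined by contracting \emph{expanded} $\Delta$-components with empty interior; for an LW stable pair there are none (finite automorphisms already force a support point in the interior of every expanded component, as the paper notes), so this move does nothing. The real obstruction to SWS stability is the \emph{non-expanded} indices: your step (4) asserts that when $\Delta_1^{(k)}$ is not expanded and ``equal to'' $Y_1$ (or $Y_2\cup Y_3$), smooth support ``forces a support point on the required side of the corresponding $\PP^1$, so the open-stratum condition holds''. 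This is false: sitting at $(1{:}0)$ or $(0{:}1)$ is precisely \emph{not} lying in $(\Delta_1^{(k)})^\circ\cup(\Delta_2^{(n+1-k)})^\circ$, which is what Theorem \ref{stability theorem} requires. Concretely, take the fibre of $X[2]$ over $(0,0,t_3)$ with $t_3\neq 0$ and $Z$ supported in the interior of $Y_2$ and in the interior of the expanded bubble over $(Y_1\cap Y_2)^\circ$: this is admissible with finite stabiliser, hence LW stable, but for $k=2$ one has $\Delta_1^{(2)}$ equal to $Y_1$ and $\Delta_2^{(1)}$ equal to $Y_1\cup Y_3$, and no support point lies in either open stratum (every support point has $y=0$ and avoids the interior of $Y_1$); by Lemma \ref{unstable} no linearisation makes $Z$ stable in this representative.

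The missing idea, which is the content of the paper's proof, is that such a failing index always corresponds to a non-expanded $\Delta$-component equal to some $Y_i$ containing no support, and that the equivalences of $\mathfrak{X}$ (the $\Tilde{\tau}_{I,I'}$ identifications together with the standard embeddings $C[n]\hookrightarrow C[n+1]$) let you discard the superfluous nonzero basis direction, i.e.\ pass to an equivalent representative pulled back from a smaller $C[n']$ in which $Y_i$ is not equal to any $\Delta$-component. After this reduction every index $k$ corresponds to an expanded component, LW stability supplies a support point in each open stratum, and Theorem \ref{stability theorem} then gives GIT stability for some $\cM$, hence SWS stability. So your contraction move is aimed at the wrong class of components: it must be formulated for the non-expanded (``equal to $Y_i$'') directions, not for empty expanded ones; with that correction the argument closes and coincides with the paper's.
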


\begin{proof}
    As we have an inclusion of the SWS stable locus into the LW stable locus, we know that this map must be injective. It remains to show that it is surjective. Let us take any point in $|\mathfrak{M}^m_{\LW}(k)|$. This is given by the equivalence class of a pair $(Z_k,\cX_k)$, where $Z_k$ is a length $m$ zero-dimensional subscheme in a fibre $\cX_k$ over the point $\Spec k$. If the pair $(Z_k,\cX_k)$ is already SWS stable, then there is nothing left to prove. Otherwise, $(Z_k,\cX_k)$ is LW stable but not SWS stable. This implies that there is at least a point of the support in each expanded $\Delta$-component, but there is at least one $\Delta$-component which is not expanded out which contains no point of the support. Let us say this $\Delta$-component is equal to $Y_i$. But by the equivalences of the stack $\mathfrak{X}$ such a fibre is equivalent to a fibre where $Y_i$ is not equal to any $\Delta$-component. It will therefore be equivalent to a fibre in which every $\Delta$-component contains at least one point of the support, which gives us an SWS stable fibre-subscheme pair.
\end{proof}

We will need also the following result from Alper and Kresch \cite{AK}.

\begin{lemma}\label{representable}
Let $\mathfrak{W}$ be a Deligne-Mumford stack with finite inertia, let $\mathfrak{Y}$ be an algebraic stack with separated diagonal and let $f\colon \mathfrak{W}\to \mathfrak{Y}$ be a morphism. Then the largest open substack $\mathfrak{U}$ of $\mathfrak{W}$ on which the restriction of $f$ is a representable morphism enjoys the following characterisation: the geometric points of $\mathfrak{U}$ are precisely those at which $f$ induces an injective homomorphism of stabiliser group schemes.
\end{lemma}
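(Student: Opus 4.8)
The plan is to reduce the statement to a property of the \emph{relative inertia stack} $I_{\mathfrak{W}/\mathfrak{Y}}$ and then exploit the finite inertia hypothesis. Recall first the standard dictionary: a morphism $f\colon\mathfrak{W}\to\mathfrak{Y}$ of algebraic stacks is representable (by algebraic spaces) if and only if the diagonal $\Delta_f$ is a monomorphism, equivalently if and only if the relative inertia
\[
I_{\mathfrak{W}/\mathfrak{Y}} \coloneqq \mathfrak{W}\times_{\mathfrak{W}\times_{\mathfrak{Y}}\mathfrak{W}}\mathfrak{W}
\]
is the trivial group scheme over $\mathfrak{W}$, i.e.\ its structure morphism to $\mathfrak{W}$ agrees with the identity section. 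Over a geometric point $w\to\mathfrak{W}$ the fibre of $I_{\mathfrak{W}/\mathfrak{Y}}\to\mathfrak{W}$ is exactly $\ker\bigl(\Aut_{\mathfrak{W}}(w)\to\Aut_{\mathfrak{Y}}(f(w))\bigr)$, so the locus in the statement is precisely the locus of points where this fibre is trivial; and the same dictionary applied to an open substack $\mathfrak{V}\subseteq\mathfrak{W}$ shows $f|_{\mathfrak{V}}$ is representable iff $I_{\mathfrak{V}/\mathfrak{Y}}=I_{\mathfrak{W}/\mathfrak{Y}}\times_{\mathfrak{W}}\mathfrak{V}\to\mathfrak{V}$ is an isomorphism.

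Next I would show $I_{\mathfrak{W}/\mathfrak{Y}}\to\mathfrak{W}$ is a finite unramified group scheme. Since $\mathfrak{Y}$ has separated diagonal, the identity section $\mathfrak{Y}\to I_{\mathfrak{Y}}$ is a closed immersion (it is a base change of $\Delta_{\mathfrak{Y}}$, which is a closed immersion); pulling this back along $f$ and along the natural map $I_{\mathfrak{W}}\to f^{*}I_{\mathfrak{Y}}$ exhibits $I_{\mathfrak{W}/\mathfrak{Y}}\hookrightarrow I_{\mathfrak{W}}$ as a closed immersion. As $\mathfrak{W}$ is Deligne--Mumford with finite inertia, $I_{\mathfrak{W}}\to\mathfrak{W}$ is finite and unramified, hence so is the composite $I_{\mathfrak{W}/\mathfrak{Y}}\to\mathfrak{W}$, which is moreover a closed subgroup scheme of $I_{\mathfrak{W}}$.

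Now I would invoke the following lemma about a finite unramified group scheme $G\to\mathfrak{W}$: the identity section $e\colon\mathfrak{W}\to G$ is an open immersion, since it is a base change of the diagonal of the unramified morphism $G\to\mathfrak{W}$, which is an open immersion. Because $G\to\mathfrak{W}$ is finite, hence universally closed, the image $\mathfrak{Z}$ of the closed substack $G\setminus e(\mathfrak{W})$ is closed in $\mathfrak{W}$; set $\mathfrak{U}\coloneqq\mathfrak{W}\setminus\mathfrak{Z}$, taking $G=I_{\mathfrak{W}/\mathfrak{Y}}$. Then: (i) a geometric point $w$ lies in $\mathfrak{U}$ iff $G_{w}$ has no point outside $e(w)$, i.e.\ iff $G_{w}$ is trivial, i.e.\ iff $\Aut_{\mathfrak{W}}(w)\to\Aut_{\mathfrak{Y}}(f(w))$ is injective; (ii) over $\mathfrak{U}$ we have $G\times_{\mathfrak{W}}\mathfrak{U}=e(\mathfrak{U})\cong\mathfrak{U}$, because the complement $G\times_{\mathfrak{W}}\mathfrak{U}\setminus e(\mathfrak{U})$ maps into $\mathfrak{Z}\cap\mathfrak{U}=\varnothing$; hence $I_{\mathfrak{U}/\mathfrak{Y}}\to\mathfrak{U}$ is an isomorphism and so $f|_{\mathfrak{U}}$ is representable; (iii) if $\mathfrak{V}\subseteq\mathfrak{W}$ is any open substack with $f|_{\mathfrak{V}}$ representable, then $I_{\mathfrak{V}/\mathfrak{Y}}\to\mathfrak{V}$ is trivial, so every geometric point of $\mathfrak{V}$ has trivial relative-inertia fibre and hence lies in $\mathfrak{U}$; as $\mathfrak{V}$ and $\mathfrak{U}$ are open substacks, $\mathfrak{V}\subseteq\mathfrak{U}$. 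Thus $\mathfrak{U}$ is the largest open substack on which $f$ is representable, with the asserted pointwise characterisation.

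The main obstacle I expect is the careful bookkeeping around the word \emph{representable}: one must fix it to mean ``representable by algebraic spaces'' and justify the equivalence with faithfulness/triviality of relative inertia \emph{functorially} in all test schemes rather than merely on geometric points --- which is why defining $\mathfrak{U}$ through the closed image of the \emph{scheme} $G\setminus e(\mathfrak{W})$ (rather than directly as a set of points) is essential. The other slightly delicate input is the lemma that the identity section of a finite unramified group scheme is both open and closed; everything past that is formal manipulation of inertia stacks together with the finite-inertia hypothesis on $\mathfrak{W}$ and the separatedness of $\Delta_{\mathfrak{Y}}$.
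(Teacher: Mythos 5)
The paper offers no proof of this lemma at all: it is quoted directly from Alper--Kresch \cite{AK}, so there is no internal argument to compare yours against. Your proof is essentially the standard argument for this statement and it goes through: reduce representability (by algebraic spaces) to triviality of the relative inertia, show that $I_{\mathfrak{W}/\mathfrak{Y}}$ is a finite unramified group space over $\mathfrak{W}$ whose identity section is an open immersion, and define $\mathfrak{U}$ as the complement of the closed image of $I_{\mathfrak{W}/\mathfrak{Y}}\setminus e(\mathfrak{W})$; the pointwise characterisation and the maximality among open substacks then follow exactly as you describe, and your remark that the open substack must be produced scheme-theoretically rather than pointwise is the right thing to emphasise. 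One justification is misstated, although the fact you need is true and is exactly where the hypothesis on $\mathfrak{Y}$ enters: the identity section $\mathfrak{Y}\to I_{\mathfrak{Y}}$ is not a base change of $\Delta_{\mathfrak{Y}}$, and $\Delta_{\mathfrak{Y}}$ is not assumed to be a closed immersion (that would say $\mathfrak{Y}$ is separated, which is not hypothesised). Rather, the identity section is the relative diagonal of the morphism $\Delta_{\mathfrak{Y}}$, hence a closed immersion precisely because $\Delta_{\mathfrak{Y}}$ is assumed separated; pulling this section back along $I_{\mathfrak{W}}\to f^{*}I_{\mathfrak{Y}}$ then exhibits $I_{\mathfrak{W}/\mathfrak{Y}}$ as a closed subgroup space of the finite unramified $I_{\mathfrak{W}}$, as you intend. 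With that small repair the argument is complete and correctly accounts for each hypothesis (finite inertia and DM for $\mathfrak{W}$, separated diagonal for $\mathfrak{Y}$).
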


Now we are in a position to prove the following theorem:

\begin{theorem}\label{main_theorem}
The map $f\colon \mathfrak{M}^m_{\SWS} \to \mathfrak{M}^m_{\LW}$ is an isomorphism of stacks.
\end{theorem}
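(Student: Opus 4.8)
The plan is to verify the three hypotheses of Lemma \ref{equiv_stacks} for the morphism $f\colon \mathfrak{M}^m_{\SWS}\to \mathfrak{M}^m_{\LW}$, so that it is an isomorphism of stacks. First I would check that $f$ is étale and representable of finite type. Both stacks are quotients (locally) of open subschemes of the relative Hilbert schemes $H^m_{[n]}$ by the action of $G$ together with the extra stack identifications $\Tilde{\tau}_{I,I'}$, and the inclusion $H^{m}_{[n],\SWS}\hookrightarrow H^{m}_{[n],\LW}$ of $G$-invariant open subschemes is an open immersion, hence étale; passing to the stack quotients preserves this, so $f$ is representable and étale of finite type. (The finite type and Deligne--Mumford hypotheses of Lemma \ref{equiv_stacks} are supplied by the Corollary after Proposition \ref{finite autom}, which gives that both stacks are Deligne--Mumford and proper.)

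Next I would invoke Lemma \ref{bijection of stacks}, which already establishes that $|f|\colon |\mathfrak{M}^m_{\SWS}(k)|\to|\mathfrak{M}^m_{\LW}(k)|$ is a bijection: injectivity is immediate from the inclusion of stable loci, and surjectivity is obtained by using the stack equivalences $\Tilde{\tau}_{I,I'}$ to move any LW stable pair that fails SWS stability (because some non-expanded $\Delta$-component equal to $Y_i$ contains no support point) to an equivalent pair in which every $\Delta$-component carries a support point, hence is SWS stable. So the only remaining point is the statement about automorphism groups.

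The main obstacle, therefore, is to show that for every $x\in\mathfrak{M}^m_{\SWS}(k)$ the induced map $\Aut_{\mathfrak{M}^m_{\SWS}}(x)\to\Aut_{\mathfrak{M}^m_{\LW}}(f(x))$ is an isomorphism. Since $f$ is representable, Lemma \ref{representable} (applied with $\mathfrak{W}=\mathfrak{M}^m_{\SWS}$, which has finite inertia by Proposition \ref{finite autom}, and $\mathfrak{Y}=\mathfrak{M}^m_{\LW}$) tells us $f$ induces an injection on stabiliser group schemes at every geometric point; so it remains to prove surjectivity of $\Aut(x)\to\Aut(f(x))$. Here I would argue as follows: an automorphism of $f(x)=(Z_k,\cX_k)$ in $\mathfrak{M}^m_{\LW}$ is, by the construction of $\mathfrak{X}$, a combination of an element of the torus $G$ fixing the pair and possibly an equivalence lifted from some $\Tilde{\tau}_{I,I'}$; but as noted in the proof of Proposition \ref{finite autom}, the only $\Tilde{\tau}_{I,I'}$ fixing a given tuple in $\AA^{n+1}$ is the identity, so every automorphism comes from a stabiliser element in $G$. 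Such an element of $G$ stabilises the SWS stable representative just as well as the LW stable one (the SWS condition was cut out $G$-equivariantly and, via Lemma \ref{bijection of stacks}, $x$ and $f(x)$ have the same underlying $G$-orbit data after the harmless $\Tilde{\tau}$-translation), so it lifts to an automorphism of $x$. This gives surjectivity, hence the bijection of automorphism groups.

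With the three hypotheses of Lemma \ref{equiv_stacks} in hand — $f$ representable étale of finite type, $|f|$ bijective by Lemma \ref{bijection of stacks}, and $\Aut_{\mathfrak{M}^m_{\SWS}}(x)\xrightarrow{\sim}\Aut_{\mathfrak{M}^m_{\LW}}(f(x))$ for all $x$ — the lemma immediately yields that $f$ is an isomorphism of stacks, completing the proof. I expect the delicate bookkeeping to be entirely in the automorphism comparison, specifically in ruling out that passing to the SWS representative (which may involve a nontrivial $\Tilde{\tau}_{I,I'}$) could shrink or enlarge the stabiliser; the key point making this work is precisely the rigidity observation about $\Tilde{\tau}_{I,I'}$ from Proposition \ref{finite autom} together with the $G$-equivariance of all the loci involved.
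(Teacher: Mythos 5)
Your proposal is correct and follows essentially the same route as the paper: Lemma \ref{equiv_stacks} combined with Lemma \ref{bijection of stacks} for the bijection on $k$-points, Lemma \ref{representable} for representability, and the key observation that all automorphisms come from the torus $G$ (the equivalences $\Tilde{\tau}_{I,I'}$ fix nothing, as in Proposition \ref{finite autom}) and agree for $x$ and $f(x)$. The only cosmetic difference is the order of the argument: the paper deduces representability from the bijectivity of stabiliser homomorphisms via Lemma \ref{representable}, whereas you establish representability (and the étaleness hypothesis, which the paper leaves implicit) first and then read off injectivity of stabilisers from that same lemma before checking surjectivity.
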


\begin{proof}

This can be seen by applying Lemma \ref{equiv_stacks} to the map $f$. In order to do this we must show that this morphism is representable, with the help of Lemma \ref{representable}. It follows directly from the fact that $\mathfrak{M}^m_{\SWS}$ is a separated Deligne-Mumford stack that it has finite inertia. By Lemma \ref{bijection of stacks}, the first condition of Lemma \ref{equiv_stacks} is satisfied. And the map $f$ defined above must also induce a bijective homomorphism of stabilisers since the only elements which can stabilise a family $(Z,\cX)$ in $\mathfrak{M}^m_{\SWS}$ or $\mathfrak{M}^m_{\LW}$ are elements of $\GG_m^n$ (the other equivalences on $\mathfrak{X}$ do not stabilise any families as explained in Proposition \ref{finite autom}) and, by construction, if a family $(Z,\cX)$ in $\mathfrak{M}^m_{\SWS}$ has stabiliser $\Stab_{(Z,\cX)} \subset \GG_m^n$ then $f((Z,\cX))$ must have the same stabiliser in $\GG_m^n$.
Lemma \ref{equiv_stacks} therefore holds and $f$ is an isomorphism of stacks.

\end{proof}

\printbibliography[
heading=bibintoc,
title={References}
] 

\end{document}